\documentclass[a4paper,12pt]{article}
\setlength{\textwidth}{16cm}
\setlength{\textheight}{23cm}
\setlength{\oddsidemargin}{0mm}
\setlength{\topmargin}{-1cm}

\usepackage{latexsym}
\usepackage{amsmath}
\usepackage{amssymb}
\usepackage{enumerate}
\usepackage{bm}
\usepackage{mathrsfs}

\usepackage{theorem}
\newtheorem{theorem}{Theorem}[section]
\newtheorem{proposition}[theorem]{Proposition}
\newtheorem{lemma}[theorem]{Lemma}
\newtheorem{corollary}[theorem]{Corollary}

\theorembodyfont{\rmfamily}
\newtheorem{proof}{\textmd{\textit{Proof.}}}

\newtheorem{outline}{\textmd{\textit{Outline of proof.}}}

\newtheorem{remark}[theorem]{Remark}

\newtheorem{definition}[theorem]{Definition}

\makeatletter

\@addtoreset{equation}{section}
\makeatother

\newcommand{\qedd}{\hfill \Box}
\newcommand{\ve}{\varepsilon}
\newcommand{\del}{\partial}
\newcommand{\lra}{\longrightarrow}
\newcommand{\e}{\mathrm{e}}

\newcommand{\Z}{\ensuremath{\mathbb{Z}}}
\newcommand{\R}{\ensuremath{\mathbb{R}}}
\newcommand{\Sph}{\ensuremath{\mathbb{S}}}
\newcommand{\cC}{\ensuremath{\mathcal{C}}}
\newcommand{\cE}{\ensuremath{\mathcal{E}}}
\newcommand{\cI}{\ensuremath{\mathcal{I}}}
\newcommand{\cL}{\ensuremath{\mathcal{L}}}
\newcommand{\cP}{\ensuremath{\mathcal{P}}}
\newcommand{\scN}{\ensuremath{\mathscr{N}}}
\newcommand{\fm}{\ensuremath{\mathfrak{m}}}
\newcommand{\bs}{\ensuremath{\mathbf{s}}}
\newcommand{\bK}{\ensuremath{\mathbf{K}}}
\newcommand{\bS}{\ensuremath{\mathbf{S}}}
\newcommand{\sC}{\ensuremath{\mathsf{C}}}
\newcommand{\sS}{\ensuremath{\mathsf{S}}}

\def\vol{\mathop{\mathrm{vol}}\nolimits}
\def\diam{\mathop{\mathrm{diam}}\nolimits}
\def\div{\mathop{\mathrm{div}}\nolimits}

\def\loc{\mathop{\mathrm{loc}}\nolimits}
\def\Cut{\mathop{\mathrm{Cut}}\nolimits}
\def\Ent{\mathop{\mathrm{Ent}}\nolimits}

\def\Var{\mathop{\mathrm{Var}}\nolimits}
\def\Ric{\mathop{\mathrm{Ric}}\nolimits}

\def\CD{\mathop{\mathrm{CD}}\nolimits}
\def\RCD{\mathop{\mathrm{RCD}}\nolimits}
\def\HS{\mathop{\mathrm{HS}}\nolimits}

\newcommand{\Grad}{\bm{\nabla}}
\newcommand{\Lap}{\bm{\Delta}}

\newcommand{\rev}[1]{\overleftarrow{#1}}

\title{Nonlinear geometric analysis on Finsler manifolds}
\author{Shin-ichi Ohta\thanks{Department of Mathematics, Kyoto University,
Kyoto, 606-8502, Japan ({\sf sohta@math.kyoto-u.ac.jp}),
{\it Current address}: Department of Mathematics, Osaka University,
Osaka, 560-0043, Japan ({\sf s.ohta@math.sci.osaka-u.ac.jp});
Supported in part by JSPS Grant-in-Aid for Scientific Research (KAKENHI) 15K04844.}}
\date{}
\pagestyle{plain}

\begin{document}

\maketitle

\begin{abstract}
This is a survey article on recent progress of comparison geometry
and geometric analysis on Finsler manifolds of weighted Ricci curvature bounded below.
Our purpose is two-fold:
Give a concise and geometric review on the birth of weighted Ricci curvature
and its applications;
Explain recent results from a nonlinear analogue of the $\Gamma$-calculus
based on the Bochner inequality.
In the latter we discuss some gradient estimates, functional inequalities,
and isoperimetric inequalities.
%
\end{abstract}

\tableofcontents

\section{Introduction}

The aim of this article is to review the recent developments
of comparison geometry and geometric analysis
on Finsler manifolds of weighted Ricci curvature bounded below.
The weighted Ricci curvature was introduced by the author in \cite{Oint}.
Since then it has helped us to understand the similarities and differences
between Riemannian and Finsler manifolds more deeply.
One of the main challenges, compared with the Riemannian case,
is in the nonlinearity of the Laplacian and the associated heat equation.
How to deal with such a nonlinear evolution equation
would be of interest also in the analytic viewpoint.

A \emph{Finsler manifold} will be a pair of a manifold $M$
and a nonnegative function $F$ on the tangent bundle $TM$
such that $F|_{T_xM}$ gives a Minkowski norm for each $x \in M$.
We remark that $F(-v) \neq F(v)$ is allowed,
which is called the \emph{non-reversibility} and
is a special feature of Finsler manifolds.
One can define the distance and geodesics in natural geometric ways,
whereas the non-reversibility of $F$ leads to the \emph{asymmetry} of the distance,
namely $d(y,x)$ does not necessarily coincide with $d(x,y)$.
Analyzing the behavior of geodesics, we can further introduce
Jacobi fields and the curvature tensor.
Thus we arrive at the natural notions of the \emph{flag curvature}
(a generalization of the Riemannian sectional curvature)
and the \emph{Ricci curvature}.
Some results in comparison Riemannian geometry concerning the distance
are then generalized to the Finsler context
(for example, the Cartan--Hadamard theorem and the Bonnet--Myers theorem).

Now, to proceed further in this direction,
what is a \emph{measure} controlled by the Ricci curvature?
At this point we have another difficulty;
There are several choices for a `canonical' measure
(such as the Busemann--Hausdorff measure and the Holmes--Thompson measure),
all of them go down to the volume measure on a Riemannian manifold.
Our strategy initiated in \cite{Oint} is that,
instead of choosing some constructive measure,
we begin with an arbitrary (smooth, positive) measure $\fm$ on $M$
and modify the Ricci curvature according to the choice of $\fm$.
The modification is done in the same manner as weighted Riemannian manifolds.
We call this modified Ricci curvature the \emph{weighted Ricci curvature} $\Ric_N$
(also called the \emph{Bakry--\'Emery--Ricci curvature}),
where $N$ is a parameter sometimes called the \emph{effective dimension}.
By means of $\Ric_N$, one can generalize many results including
the \emph{Bishop--Gromov volume comparison} to this Finsler context and,
most notably, the \emph{curvature-dimension condition} $\CD(K,N)$
in the sense of Lott--Sturm--Villani (\cite{StI,StII,LV}) is equivalent to
the lower curvarture bound $\Ric_N \ge K$ (\cite{Oint}).
This deep equivalence relation ensures
the naturalness and importance of the weighted Ricci curvature.

The Ricci curvature plays the prominent role in geometric analysis.
The \emph{Laplacian} $\Lap$ acting on functions is naturally defined as
the divergence (in terms of $\fm$) of the gradient vector field (in terms of $F$).
This is also called the \emph{Witten Laplacian} or the \emph{weighted Laplacian}.
Since taking the gradient vector (more precisely, the Legendre transform)
is a nonlinear operation, our Laplacian is \emph{nonlinear}
(unless $F$ comes from a Riemannian metric).
Nonetheless, since the Laplacian is (locally) uniformly elliptic,
the associated \emph{heat equation} $\del_t u=\Lap u$ is still well-posed.
In fact, we can apply the classical technique due to Saloff-Coste \cite{Sal} and others
to show the unique existence and a certain regularity of solutions to the heat equation
(\cite{GS,OShf}).

Furthermore, the \emph{Bochner--Weitzenb\"ock formula} was established in \cite{OSbw}
in the form of
\[ \Delta\!^{\Grad u} \bigg[ \frac{F^2(\Grad u)}{2} \bigg] -D[\Lap u](\Grad u)
 =\Ric_{\infty}(\Grad u) +\| \Grad^2 u \|_{\HS(\Grad u)}^2, \]
followed by the \emph{Bochner inequality}
\[ \Delta\!^{\Grad u} \bigg[ \frac{F^2(\Grad u)}{2} \bigg] -D[\Lap u](\Grad u)
 \ge KF^2(\Grad u) +\frac{(\Lap u)^2}{N} \]
under the lower curvature bound $\Ric_N \ge K$.
Here we mixed the nonlinear Laplacian $\Lap$ and its linearization $\Delta\!^{\Grad u}$.
Similarly to the Riemannian case, the Bochner inequality plays a significant role.
See \cite{OSbw,Osplit,WX,Xi,YH}
for some geometric and analytic applications not covered in this article.

Our focus in this article will be on a nonlinear analogue of the \emph{$\Gamma$-calculus}.
The $\Gamma$-calculus, developed by Bakry and his collaborators
(see \cite{BE,Ba} and the recent book \cite{BGL}),
is a successful theory of analyzing linear diffusion operators and the associated semigroups
by means of the integration by parts and a kind of Bochner inequality.
This theory fits surprisingly well with our Finsler setting,
although the Laplacian is nonlinear.

We will discuss three applications of the $\Gamma$-calculus:
gradient estimates, functional inequalities, and isoperimetric inequalities.
We will give the outlines of some proofs to show the basic idea of the $\Gamma$-calculus.
We first consider the $L^2$- and $L^1$-\emph{gradient estimates}
for heat flow under $\Ric_{\infty} \ge K$.
These gradient estimates are indeed equivalent to $\Ric_{\infty} \ge K$
similarly to the Riemannian situation (\cite{vRS,Oisop}).
Next we show three functional inequalities:
the \emph{Poincar\'e--Lichnerowicz $($spectral gap$)$ inequality},
the \emph{logarithmic Sobolev inequality}, and the \emph{Sobolev inequality}.
Here the assumption is $\Ric_N \ge K>0$ with $N \in [n,\infty]$,
and $N$ can be also negative only in the Poincar\'e--Lichnerowicz inequality (\cite{Ofunc}).
We finally study the \emph{Gaussian isoperimetric inequality}
under $\Ric_{\infty} \ge K>0$, which generalizes Bakry--Ledoux's inequality
in the Riemannian situation (\cite{BL,Oisop}).
All the estimates are sharp and have the same forms as Riemannian manifolds.
Only the exception is the admissible range of the exponent $p$
in the Sobolev inequality (see Remark~\ref{rm:Sobo}).
We stress that, however, we cannot generalize every result of Riemannian manifolds
to Finsler manifolds.
One of the most important differences is the lack of contraction property of heat flow
(\cite{OSnc}).
In general we know much less about gradient flows of convex functions
in Finsler manifolds or even in normed spaces (see \cite{OP} for a related discussion).

The organization of the article is as follows.
Section~\ref{sc:prel} is a crash course for the basic notions and ideas of comparison Finsler geometry.
We try to explain in a geometric way how the distance, geodesics, and curvatures arise.
Section~\ref{sc:heat} is devoted to the study of the Laplacian and the heat semigroup.
Then we consider the gradient estimates in Section~\ref{sc:grades},
the functional inequalities in Section~\ref{sc:funct},
and the isoperimetric inequality in Section~\ref{sc:isop}, respectively.
\medskip

{\it Acknowledgements.}
I thank Professor Sorin Sabau for encouraging me to write a survey on this subject.
I am also grateful to an anonymous referee for valuable comments.

\section{Comparison geometry of Finsler manifolds}\label{sc:prel}

This section is devoted to a concise review on comparison geometry of Finsler manifolds.
The main object will be a Finsler manifold equipped with a measure
whose weighted Ricci curvature is bounded from below.
We refer to \cite{BCS,Shlec,SS} for the fundamentals of Finsler geometry
as well as related comparison geometric studies.

Throughout the article, unless otherwise indicated,
let $M$ be a connected $\cC^{\infty}$-manifold without boundary
of dimension $n \ge 2$.
We will also fix an arbitrary positive $\cC^{\infty}$-measure $\fm$ on $M$
from Subsection~\ref{ssc:wRic}.

\subsection{Finsler structures}\label{ssc:Fmfd}

Given local coordinates $(x^i)_{i=1}^n$ on an open set $U \subset M$,
we will always use the fiber-wise linear coordinates
$(x^i,v^j)_{i,j=1}^n$ of $TU$ such that
\[ v=\sum_{j=1}^n v^j \frac{\del}{\del x^j}\Big|_x \in T_xM, \qquad x \in U. \]

\begin{definition}[Finsler structures]\label{df:Fstr}
We say that a nonnegative function $F:TM \lra [0,\infty)$ is
a \emph{$\cC^{\infty}$-Finsler structure} of $M$ if the following three conditions hold:
\begin{enumerate}[(1)]
\item(\emph{Regularity})
$F$ is $\cC^{\infty}$ on $TM \setminus 0$,
where $0$ stands for the zero section;

\item(\emph{Positive $1$-homogeneity})
It holds $F(cv)=cF(v)$ for all $v \in TM$ and $c \ge 0$;

\item(\emph{Strong convexity})
The $n \times n$ matrix
\begin{equation}\label{eq:gij}
\big( g_{ij}(v) \big)_{i,j=1}^n :=
 \bigg( \frac{1}{2}\frac{\del^2 (F^2)}{\del v^i \del v^j}(v) \bigg)_{i,j=1}^n
\end{equation}
is positive-definite for all $v \in TM \setminus 0$.
\end{enumerate}
We call such a pair $(M,F)$ a \emph{$\cC^{\infty}$-Finsler manifold}.
\end{definition}

Notice that the positive-definiteness is independent of
the choice of local coordinates.
One can similarly define $\cC^l$-Finsler manifolds,
though we consider only $\cC^{\infty}$-Finsler manifolds in this article.
Some more remarks on Definition~\ref{df:Fstr} are ready.

\begin{remark}\label{rm:Fstr}
(a)
The homogeneity (2) is imposed only in the positive direction,
that is, for nonnegative $c$.
This leads to the asymmetry of the associated distance function
(see the next subsection).
If $F(-v)=F(v)$ holds for all $v \in TM$, then we say that $F$ is \emph{reversible}
or \emph{absolutely homogeneous}.

(b)
The strong convexity (3) means that the unit sphere $U_xM:=T_xM \cap F^{-1}(1)$
(called the \emph{indicatrix}) is positively curved, and it implies the strict convexity:
$F(v+w) \le F(v)+F(w)$ for all $v,w \in T_xM$ and equality holds
only when $v=aw$ or $w=av$ for some $a \ge 0$.

(c)
Although we will discuss only under Definition~\ref{df:Fstr},
the $\cC^{\infty}$-regularity (1) and the strong convexity (3) can be weakened
in various ways occasionally (see \cite{OShf} for instance).
\end{remark}

Let us continue the study of the strong convexity in the remainder of this subsection.
Given each $v \in T_xM \setminus 0$, the positive-definite matrix
$(g_{ij}(v))_{i,j=1}^n$ in \eqref{eq:gij} induces
the Riemannian structure $g_v$ of $T_xM$ as
\begin{equation}\label{eq:gv}
g_v\bigg( \sum_{i=1}^n a_i \frac{\del}{\del x^i}\Big|_x,
 \sum_{j=1}^n b_j \frac{\del}{\del x^j}\Big|_x \bigg)
 := \sum_{i,j=1}^n g_{ij}(v) a_i b_j.
\end{equation}
Notice that this definition is coordinate-free, and we have $g_v(v,v)=F^2(v)$.
One can regard $g_v$ as the best Riemannian approximation of $F|_{T_xM}$
in the direction $v$.
In fact, the unit sphere of $F|_{T_xM}$ (which is positively curved due to Remark~\ref{rm:Fstr}(b))
is tangent to that of $g_v$ at $v/F(v)$ up to the second order.
The metric $g_v$ plays quite important roles in comparison Finsler geometry.

In the coordinates $(x^i,\alpha_j)_{i,j=1}^n$ of $T^*U$ given by
$\alpha=\sum_{j=1}^n \alpha_j dx^j$,
we will also consider
\[ g^*_{ij}(\alpha) :=\frac{1}{2} \frac{\del^2[(F^*)^2]}{\del \alpha_i \del \alpha_j}(\alpha),
\qquad i,j=1,2,\ldots,n, \]
for $\alpha \in T^*U \setminus 0$.
Here $F^*:T^*M \lra [0,\infty)$ is the \emph{dual Minkowski norm} to $F$, namely
\[ F^*(\alpha) :=\sup_{v \in T_xM,\, F(v) \le 1} \alpha(v)
 =\sup_{v \in T_xM,\, F(v)=1} \alpha(v) \]
for $\alpha \in T_x^*M$.
It is clear by definition that $\alpha(v) \le F^*(\alpha) F(v)$, and hence
\[ \alpha(v) \ge -F^*(\alpha)F(-v), \qquad \alpha(v) \ge -F^*(-\alpha)F(v). \]
We remark and stress that, however, $\alpha(v) \ge -F^*(\alpha)F(v)$ does not hold in general
due to the non-reversibility of $F$.

The strong convexity is related to the following quantities,
these are fundamental in the geometry of Banach spaces (see \cite{BCL,Ouni}).
For $x \in M$, we define the ($2$-)\emph{uniform smoothness constant} at $x$ by
\[ \sS_F(x) :=\sup_{v,w \in T_xM \setminus 0} \frac{g_v(w,w)}{F^2(w)}
 =\sup_{\alpha,\beta \in T^*_xM \setminus 0}
 \frac{F^*(\beta)^2}{g^*_{\alpha}(\beta,\beta)}. \]
Since $g_v(w,w) \le \sS_F(x) F^2(w)$ and $g_v$ is the Hessian of $F^2/2$ at $v$,
the constant $\sS_F(x)$ measures the concavity of $F^2$ in $T_xM$.
We also set $\sS_F:=\sup_{x \in M} \sS_F(x)$.
Notice that $\sS_F \in [1,\infty]$ and $\sS_F=1$ holds if and only if
$F$ comes from a Riemannian metric (see \cite{Ouni}).
We similarly define the ($2$-)\emph{uniform convexity constants} as
\[ \sC_F(x) :=\sup_{v,w \in T_xM \setminus 0} \frac{F^2(w)}{g_v(w,w)}
 =\sup_{\alpha,\beta \in T^*_xM \setminus 0}
 \frac{g^*_{\alpha}(\beta,\beta)}{F^*(\beta)^2}, \qquad
 \sC_F:=\sup_{x \in M} \sC_F(x). \]
Again, $\sC_F \in [1,\infty]$ in general
and $\sC_F=1$ holds if and only if $(M,F)$ is Riemannian.

It is readily seen that the constants $\sS_F$ and $\sC_F$
control the \emph{reversibility constant}, defined by
\[ \Lambda_F :=\sup_{v \in TM \setminus 0} \frac{F(v)}{F(-v)} \,\in [1,\infty], \]
as follows (see \cite[Lemma~2.4]{Oisop} for instance).

\begin{lemma}\label{lm:rev}
We have
\[ \Lambda_F \le \min\{ \sqrt{\sS_F},\sqrt{\sC_F} \}. \]
\end{lemma}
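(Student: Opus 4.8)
The plan is to prove each of the two bounds $\Lambda_F \le \sqrt{\sS_F}$ and $\Lambda_F \le \sqrt{\sC_F}$ by comparing $F(v)$ and $F(-v)$ through the auxiliary inner product $g_v$. First I would fix an arbitrary $v \in T_xM \setminus 0$ and recall the two facts established just above: $g_v(v,v) = F^2(v)$, and (from the definition of $\sS_F$ applied with the pair $(v, -v)$) the inequality $g_v(-v,-v) \le \sS_F(x)\, F^2(-v)$. Since $g_v$ is bilinear and symmetric, $g_v(-v,-v) = g_v(v,v) = F^2(v)$, so these combine to give $F^2(v) \le \sS_F(x)\, F^2(-v)$, hence $F(v)/F(-v) \le \sqrt{\sS_F(x)} \le \sqrt{\sS_F}$. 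Taking the supremum over all $v \in TM \setminus 0$ yields $\Lambda_F \le \sqrt{\sS_F}$.

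For the bound with $\sC_F$ I would argue dually, or equivalently run the same computation with the roles of $v$ and $-v$ interchanged. Applying the definition of $\sC_F(x)$ to the pair $(-v, v)$ gives $F^2(v) = g_v(v,v) \cdot \dfrac{F^2(v)}{g_v(v,v)}$; more directly, $\sC_F(x) \ge \dfrac{F^2(w)}{g_v(w,w)}$ for every $w$, so taking $v' := -v$ as the first slot and $w := v$ we get $\dfrac{F^2(v)}{g_{-v}(v,v)} \le \sC_F(x)$. Now $g_{-v}(-v,-v) = F^2(-v)$ by the identity $g_u(u,u) = F^2(u)$ applied to $u = -v$, and $g_{-v}(v,v) = g_{-v}(-v,-v) = F^2(-v)$ by bilinearity. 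Therefore $F^2(v)/F^2(-v) \le \sC_F(x) \le \sC_F$, and taking suprema gives $\Lambda_F \le \sqrt{\sC_F}$. Combining the two bounds yields the claimed $\Lambda_F \le \min\{\sqrt{\sS_F}, \sqrt{\sC_F}\}$.

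The only subtlety — and it is the step I would be most careful about — is the bookkeeping of which point's metric ($g_v$ versus $g_{-v}$) is being used and why the ratio in the definition of $\sS_F$ (respectively $\sC_F$) is evaluated at the right pair of vectors; the key algebraic identity that makes everything collapse is simply $g_u(u,u) = F^2(u)$ together with the fact that $g_u$ is an honest quadratic form, so $g_u(-u,-u) = g_u(u,u)$. No strong convexity beyond the existence and positive-definiteness of $g_v$ is needed, and no use of the dual formulations of $\sS_F, \sC_F$ is required, though one could equally phrase the $\sC_F$ bound on the cotangent side via $F^*$ if preferred. This is genuinely a one-line argument once the identity $g_u(u,u) = F^2(u)$ is in hand; there is no real obstacle.
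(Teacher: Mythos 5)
Your proof is correct, and it is essentially the standard argument: the paper itself gives no proof but cites \cite[Lemma~2.4]{Oisop}, where the bound is obtained in exactly this way, namely from $F^2(v)=g_v(v,v)=g_v(-v,-v)\le \sS_F\,F^2(-v)$ and $F^2(v)\le \sC_F\,g_{-v}(v,v)=\sC_F\,F^2(-v)$. Your bookkeeping of which reference vector ($g_v$ versus $g_{-v}$) enters each bound is exactly right, so there is nothing to add.
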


For later convenience, we introduce the following notations.

\begin{definition}[Reverse Finsler structures]\label{df:rev}
We define the \emph{reverse Finsler structure} $\rev{F}$ of $F$ by
$\rev{F}(v):=F(-v)$.
We will put an arrow $\leftarrow$ on those quantities associated with $\rev{F}$.
\end{definition}

For example, $\rev{d}\!(x,y)=d(y,x)$ (see \S \ref{ssc:geod}),
$\rev{\Ric}(v)=\Ric(-v)$ (\S \ref{ssc:curv}),
$\rev{\Ric}_N(v)=\Ric_N(-v)$ (\S \ref{ssc:wRic}),
$\rev{\Grad}u=-\Grad(-u)$ and $\rev{\Lap} u=-\Lap(-u)$ (\S \ref{ssc:Lap}).

\subsection{Asymmetric distance and geodesics}\label{ssc:geod}

For $x,y \in M$, we define the (\emph{asymmetric}) \emph{distance} from $x$ to $y$ by
\[ d(x,y):=\inf_{\eta} \int_0^1 F\big( \dot{\eta}(t) \big) \,dt, \]
where the infimum is taken over all piecewise $\cC^1$-curves $\eta:[0,1] \lra M$
such that $\eta(0)=x$ and $\eta(1)=y$.
Note that the asymmetry $d(y,x) \neq d(x,y)$ can occur
since $F$ is only positively homogeneous
(so $d$ is not properly a distance function in the usual sense).
We also remark that the squared distance function $d^2(x,\cdot)$
is only $\cC^1$ at $x$ in general,
and that $d^2(x,\cdot)$ is $\cC^2$ at $x$ for all $x \in M$
if and only if $F$ comes from a Riemannian metric (see \cite[Proposition~2.2]{Shvol}).
This is a reason why we have the less regularity for heat flow
(see Theorem~\ref{th:hf} below).

In the manner of metric geometry,
we say that a $\cC^{\infty}$-curve $\eta:I \lra M$ from an interval $I \subset \R$
is \emph{geodesic} if it is locally minimizing and has a constant speed with respect to $d$.
Precisely, there is $C \ge 0$ and, for any $t \in I$, we find $\ve>0$ such that
\[ d\big( \eta(s),\eta(s') \big) =C(s'-s) \]
for all $s,s' \in I \cap [t-\ve,t+\ve]$ with $s \le s'$
(then $F(\dot{\eta}) \equiv C$ as a matter of course).

One can write down the \emph{geodesic equation}
as the Euler--Lagrange equation for the action induced from $F^2$.
In such calculations, the following basic theorem plays fundamental roles
(see \cite[Theorem~1.2.1]{BCS}).

\begin{theorem}[Euler's homogeneous function theorem]\label{th:Euler}
Consider a differentiable function $H:\R^n \setminus \{0\} \lra \R$ satisfying
$H(cv)=c^r H(v)$ for some $r \in \R$ and all $c>0$ and $v \in \R^n \setminus \{0\}$
$($that is, $H$ is \emph{positively $r$-homogeneous}$)$.
Then we have
\[ \sum_{i=1}^n \frac{\del H}{\del v^i}(v)v^i=rH(v) \]
for all $v \in \R^n \setminus \{0\}$.
\end{theorem}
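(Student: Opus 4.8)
The plan is to differentiate the homogeneity relation $H(cv)=c^r H(v)$ in the scalar variable $c$ and then specialize to $c=1$. First I would fix an arbitrary $v \in \R^n \setminus \{0\}$ and introduce the auxiliary function $\phi:(0,\infty) \lra \R$ defined by $\phi(c):=H(cv)$. Since $cv \neq 0$ for every $c>0$ and $H$ is differentiable on $\R^n \setminus \{0\}$, the curve $c \mapsto cv$ is smooth with velocity $v$, and the chain rule gives
\[ \phi'(c)=\sum_{i=1}^n \frac{\del H}{\del v^i}(cv)\, v^i. \]

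On the other hand, the homogeneity hypothesis asserts the identity $\phi(c)=c^r H(v)$ of functions on $(0,\infty)$, whose right-hand side is differentiable in $c$ with derivative $r c^{r-1} H(v)$. Equating the two expressions for $\phi'(c)$ and evaluating at $c=1$ yields
\[ \sum_{i=1}^n \frac{\del H}{\del v^i}(v)\, v^i = rH(v), \]
which is exactly the claimed identity; as $v$ was arbitrary, it holds for all $v \in \R^n \setminus \{0\}$.

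There is essentially no real obstacle here: the one point requiring a little care is that homogeneity is assumed only for $c>0$, so the differentiation must be carried out on the half-line $(0,\infty)$ rather than on a full neighbourhood of the origin in $\R$; but $c=1$ is an interior point of $(0,\infty)$, so this causes no difficulty. An equivalent formulation substitutes $c=\e^t$ and differentiates $t \mapsto H(\e^t v)$ at $t=0$, which leads to the same computation.
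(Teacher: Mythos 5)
Your proof is correct: differentiating $\phi(c)=H(cv)=c^rH(v)$ in $c$ and evaluating at $c=1$ is exactly the standard argument, and your remark about working only on the half-line $(0,\infty)$ is the right point of care. The paper itself gives no proof of this theorem (it simply cites \cite[Theorem~1.2.1]{BCS}), and your argument is the same one found there, so nothing is missing.
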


Observe that $g_{ij}$ defined in \eqref{eq:gij}
is positively $0$-homogeneous on each $T_xM$, and hence
\begin{equation}\label{eq:Av}
\sum_{i=1}^n A_{ijk}(v)v^i =\sum_{j=1}^n A_{ijk}(v)v^j
 =\sum_{k=1}^n A_{ijk}(v)v^k =0
\end{equation}
for all $v \in TM \setminus 0$ and $i,j,k=1,2,\ldots,n$,
where
\[ A_{ijk}(v):=\frac{F(v)}{2} \frac{\del g_{ij}}{\del v^k}(v),
 \qquad v \in TM \setminus 0, \]
is the \emph{Cartan tensor} which
measures the variation of $g_v$ in the vertical directions.
The Cartan tensor vanishes everywhere on $TM \setminus 0$
if and only if $F$ comes from a Riemannian metric.
In this sense, the Cartan tensor is a genuinely \emph{non-Riemannian} quantity.

With the help of \eqref{eq:Av},
we arrive at the following geodesic equation
by the usual calculation similar to the Riemannian case:
\begin{equation}\label{eq:geod}
\ddot{\eta}^i(t) +\sum_{j,k=1}^n \gamma^i_{jk} \big( \dot{\eta}(t) \big)
 \dot{\eta}^j(t) \dot{\eta}^k(t) =0
\end{equation}
for all $i$, where
\begin{equation}\label{eq:gamma}
\gamma^i_{jk}(v):=\frac{1}{2}\sum_{l=1}^n g^{il}(v) \bigg\{
 \frac{\del g_{lk}}{\del x^j}(v) +\frac{\del g_{jl}}{\del x^k}(v)
 -\frac{\del g_{jk}}{\del x^l}(v) \bigg\},
 \quad v \in TM \setminus 0,
\end{equation}
is called the \emph{formal Christoffel symbol}.
We denoted by $(g^{ij}(v))$ the inverse matrix of $(g_{ij}(v))$.
Notice that $\gamma^i_{jk}$ has the same form as the Riemannian Christoffel symbol,
while it is a ($0$-homogeneous) function on $TM \setminus 0$
and cannot be reduced to a function on $M$.

By the general ODE theory, every initial vector $v \in TM$ admits
a unique geodesic $\eta:(-\ve,\ve) \lra M$ with $\dot{\eta}(0)=v$ for some $\ve>0$.
Given $v \in T_xM$, if there is a geodesic $\eta:[0,1] \lra M$ with $\dot{\eta}(0)=v$,
then we define the \emph{exponential map} by $\exp_x(v):=\eta(1)$.
We say that $(M,F)$ is \emph{forward complete} if the exponential
map is defined on whole $TM$.
In other words, every geodesic $\eta:[0,1] \lra M$ is extended
infinitely in the forward direction to the geodesic $\bar{\eta}:[0,\infty) \lra M$.
If every geodesic $\eta:[0,1] \lra M$ is extended in the backward direction to the geodesic
$\bar{\eta}:(-\infty,1] \lra M$ (in other words, if $(M,\rev{F})$ is forward complete),
then we say that $(M,F)$ is \emph{backward complete}.
The backward completeness is not necessarily equivalent to
the forward completeness (in the noncompact case).
If $(M,F)$ is forward or backward complete,
then the Hopf--Rinow theorem ensures that any pair of points
is connected by a minimal geodesic (see \cite[Theorem~6.6.1]{BCS}).

\subsection{Covariant derivative}\label{ssc:covd}

We saw in \eqref{eq:geod} and \eqref{eq:gamma} that the Finsler geodesic equation
has a similar form to the Riemannian one.
This is, however, a special feature of the geodesic equation
and we need to take care of some non-Riemannian quantities in
the more general covariant derivative.
A fine property of the geodesic equation could be understood from Theorem~\ref{th:Euler}.
In order to apply Theorem~\ref{th:Euler} to some quantity at $v \in TM \setminus 0$,
we need a contraction with respect to $v$.
In the geodesic equation \eqref{eq:geod} we have the contractions twice thanks to
$\dot{\eta}^j(t)$ and $\dot{\eta}^k(t)$, this procedure kills all the error (non-Riemannian) terms.

Notice that the geodesic equation for the Riemannian structure $g_{\dot{\eta}}$
(defined only along $\eta$) coincides with the geodesic equation \eqref{eq:geod} with respect to $F$.
This kind of property is extremely useful when we try to apply the techniques in
Riemannian geometry to Finsler geometry.
This viewpoint leads us to modify the formal Christoffel symbol \eqref{eq:gamma} into
\begin{equation}\label{eq:Gamma}
\Gamma^i_{jk}(v):=\gamma^i_{jk}(v)
 -\sum_{l,m=1}^n \frac{g^{il}}{F}(A_{lkm}N^m_j +A_{jlm}N^m_k -A_{jkm}N^m_l)(v),
 \quad v \in TM \setminus 0,
\end{equation}
where
\[ N^i_j(v):=\frac{1}{2} \frac{\del G^i}{\del v^j}(v), \qquad
 G^i(v):=\sum_{j,k=1}^n \gamma^i_{jk}(v) v^j v^k \]
(the validity of the definition \eqref{eq:Gamma} can be seen in Proposition~\ref{pr:covd} below).
We call $G^i$ and $N^i_j$ the \emph{geodesic spray coefficients}
and the \emph{nonlinear connection}, respectively,
and set $G^i(0)=N^i_j(0):=0$ by convention.
Notice that \eqref{eq:Av} yields
\[ \sum_{j,k=1}^n \Gamma^i_{jk}(v) v^j v^k
 =\sum_{j,k=1}^n \gamma^i_{jk}(v) v^j v^k. \]
However, when we contract in $v$ only once,
\[ \sum_{j=1}^n \{ \Gamma^i_{jk}(v) -\gamma^i_{jk}(v) \} v^j
 =-\sum_{l,m=1}^n \frac{g^{il}(v)}{F(v)} A_{lkm}(v) G^m(v) \]
does not necessarily vanish.

Define the \emph{covariant derivative} of a vector field $X$ by $v \in T_xM$
with the \emph{reference vector} $w \in T_xM \setminus 0$ as
\begin{equation}\label{eq:covd}
D_v^w X(x):=\sum_{i,j=1}^n \bigg\{ v^j \frac{\del X^i}{\del x^j}(x)
 +\sum_{k=1}^n \Gamma^i_{jk}(w) v^j X^k(x) \bigg\} \frac{\del}{\del x^i}\Big|_x \in T_xM.
\end{equation}
With this definition, we can show the following important property
(see \cite[\S 6.2]{Shlec} or \cite[Lemma~2.3]{Osplit}).

\begin{proposition}\label{pr:covd}
Let $V$ be a non-vanishing $\cC^{\infty}$-vector field
on an open set $U$ such that all integral curves are geodesic.
Then we have, for any $\cC^1$-vector field $W$ on $U$,
\[ D_V^V W=D_V^{g_V} W, \qquad D_W^V V=D_W^{g_V} V. \]
Here $g_V$ is the Riemannian structure of $U$ induced from $V$ as \eqref{eq:gv}
and $D^{g_V}$ denotes its corresponding covariant derivative.
\end{proposition}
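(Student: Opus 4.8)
The plan is to work in the fixed coordinates $(x^i)$ on $U$ and compare, at an arbitrary point, the Finsler covariant derivative $D^V$ — built from the connection coefficients $\Gamma^i_{jk}(V(x))$ through \eqref{eq:covd} — with the Levi-Civita covariant derivative $D^{g_V}$ of the ``frozen'' Riemannian metric $\tilde g_{ij}(x):=g_{ij}(V(x))$ on $U$, whose inverse is $g^{ij}(V(x))$ and whose induced structure is the one in \eqref{eq:gv}. Since both $D_V^V W$ and $D_V^{g_V}W$ (resp. $D_W^V V$ and $D_W^{g_V}V$) carry the same non-Christoffel term $\sum_{i,j}V^j(\del W^i/\del x^j)\,\del/\del x^i$ (resp. $\sum_{i,j}W^j(\del V^i/\del x^j)\,\del/\del x^i$), each asserted identity is equivalent to the vanishing of a single contraction of $\Gamma^i_{jk}(V)-{}^{g_V}\Gamma^i_{jk}(x)$ against the relevant vectors.

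First I would compute ${}^{g_V}\Gamma^i_{jk}(x)=\tfrac12\sum_l g^{il}(V)\big(\del_j\tilde g_{lk}+\del_k\tilde g_{jl}-\del_l\tilde g_{jk}\big)(x)$ by the chain rule. Using $A_{lkm}(v)=\tfrac{F(v)}{2}(\del g_{lk}/\del v^m)(v)$ we get $\del_j\tilde g_{lk}(x)=(\del g_{lk}/\del x^j)(V)+\tfrac{2}{F(V)}\sum_m A_{lkm}(V)(\del V^m/\del x^j)$, and likewise for the other two terms. Substituting and comparing with the definition \eqref{eq:Gamma} of $\Gamma^i_{jk}$, the formal Christoffel pieces $\gamma^i_{jk}(V)$ cancel and one is left with
\[
\Gamma^i_{jk}(V)-{}^{g_V}\Gamma^i_{jk}(x)
=-\sum_{l,m}\frac{g^{il}(V)}{F(V)}\big(A_{lkm}(V)\,B^m_j+A_{jlm}(V)\,B^m_k-A_{jkm}(V)\,B^m_l\big),
\]
where $B^m_j(x):=N^m_j(V(x))+(\del V^m/\del x^j)(x)$; i.e. the vertical correction built into $\Gamma$ in \eqref{eq:Gamma} is precisely what is needed to absorb the extra chain-rule terms, up to the replacement $N^m_j(V)\rightsquigarrow B^m_j$.

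The second ingredient is the geodesic hypothesis, and it is the only place it enters. Since $V$ is non-vanishing, every point of $U$ lies on an integral curve $\eta$ of $V$; from $\dot\eta^i=V^i(\eta)$ one gets $\ddot\eta^i=\sum_j(\del V^i/\del x^j)(\eta)\,V^j(\eta)$, so the geodesic equation \eqref{eq:geod} with $G^i(v)=\sum_{j,k}\gamma^i_{jk}(v)v^jv^k$ forces $\sum_j(\del V^m/\del x^j)\,V^j=-G^m(V)$ on all of $U$. On the other hand $G^m$ is positively $2$-homogeneous, so Euler's theorem (Theorem~\ref{th:Euler}) applied to $N^m_j=\tfrac12\,\del G^m/\del v^j$ gives $\sum_j N^m_j(V)\,V^j=G^m(V)$. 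Hence $\sum_j B^m_j\,V^j=0$ on $U$.

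The two identities now follow by contraction, using the total symmetry of $A_{ijk}$ together with $\sum_i A_{ijk}(v)v^i=0$ from \eqref{eq:Av}. For $D_V^VW-D_V^{g_V}W=\sum_i\big(\sum_{j,k}[\Gamma^i_{jk}(V)-{}^{g_V}\Gamma^i_{jk}]V^jW^k\big)\del/\del x^i$ the contraction is in the index $j$: the $B^m_j$-term vanishes by $\sum_j B^m_j V^j=0$, while the $A_{jlm}$- and $A_{jkm}$-terms vanish by $\sum_j A_{jlm}(V)V^j=\sum_j A_{jkm}(V)V^j=0$. Symmetrically, $D_W^VV-D_W^{g_V}V=\sum_i\big(\sum_{j,k}[\Gamma^i_{jk}(V)-{}^{g_V}\Gamma^i_{jk}]W^jV^k\big)\del/\del x^i$ has the contraction in the index $k$: now the $B^m_k$-term vanishes by $\sum_k B^m_k V^k=0$ and the $A_{lkm}$- and $A_{jkm}$-terms by \eqref{eq:Av}. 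This yields $D_V^VW=D_V^{g_V}W$ and $D_W^VV=D_W^{g_V}V$. The routine-but-delicate point is the chain-rule bookkeeping in the displayed formula for the difference of connection coefficients — keeping straight which arguments of $g_{ij}$, $g^{ij}$, $A_{ijk}$, $N^i_j$ are frozen at $V(x)$ and which are differentiated in $x$; the genuinely geometric step is the single identity $\sum_j B^m_j V^j=0$, and everything else is the symmetry and null-trace of the Cartan tensor.
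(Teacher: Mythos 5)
Your argument is correct: the chain-rule comparison of $\Gamma^i_{jk}(V)$ with the Levi-Civita symbols of $g_V$, the identity $\sum_j\{N^m_j(V)+\del V^m/\del x^j\}V^j=0$ coming from the geodesic equation plus Euler's theorem, and the null-trace of the Cartan tensor from \eqref{eq:Av} are exactly the ingredients needed, and the two contractions do kill all correction terms. The paper itself does not prove the proposition but defers to \cite[\S 6.2]{Shlec} and \cite[Lemma~2.3]{Osplit}, where essentially this same direct coordinate computation is carried out, so your proposal faithfully reconstructs the cited argument.
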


In particular, integral curves of $V$ are geodesic also with respect to $g_V$.
Note that we have only one contraction (with respect to $V$) in $D_V^V W$ and $D_W^V V$.
The geodesic equation $D^{\dot{\eta}}_{\dot{\eta}} \dot{\eta} \equiv 0$
is concerned with the special case of $W=V$, where we have one more contraction.

\subsection{Curvatures}\label{ssc:curv}

In order to define the curvature, we again look at the behavior of geodesics.
It can be shown that the variational vector field $J$
of a geodesic variation $\sigma:[0,1] \times (-\ve,\ve) \lra M$
(i.e., $\sigma(\cdot,s)$ is geodesic for all $s$ and $J(t):=\del \sigma/\del s(t,0)$)
satisfies the \emph{Jacobi equation}
\begin{equation}\label{eq:Jacobi}
D_{\dot{\eta}}^{\dot{\eta}} D_{\dot{\eta}}^{\dot{\eta}} J +R_{\dot{\eta}}(J)=0
\end{equation}
and vice versa, where $\eta(t):=\sigma(t,0)$ and
\begin{align*}
R_v(w) &:= \sum_{i,j=1}^n R^i_j(v) w^j \frac{\del}{\del x^i} \Big|_x, \\
R^i_j(v) &:= \frac{\del G^i}{\del x^j}(v)
 -\sum_{k=1}^n \bigg\{ \frac{\del N^i_j}{\del x^k}(v) v^k
 -\frac{\del N^i_j}{\del v^k}(v) G^k(v) \bigg\}
 -\sum_{k=1}^n N^i_k(v) N^k_j(v).
\end{align*}
We refer to \cite[\S 6.1]{Shlec} for details (where $G^i$ is one-half of ours,
while $N^i_j$ is the same).
It is unnecessary to worry about the complicated formula of $R^i_j(v)$,
what we essentially need will be only the Jacobi equation \eqref{eq:Jacobi}
and the characterization in Theorem~\ref{th:curv} below.

\begin{definition}[Flag curvature]\label{df:flag}
For linearly independent vectors $v,w \in T_xM$,
we define the \emph{flag curvature} by
\[ \bK(v,w):=\frac{g_v(R_v(w),w)}{F^2(v) g_v(w,w) -g_v(v,w)^2}. \]
\end{definition}

On Riemannian manifolds, the flag curvature $\bK(v,w)$ coincides with
the sectional curvature of the $2$-plane $v \wedge w$ spanned by $v$ and $w$.
We remark that $\bK(v,w)$ depends not only on the \emph{flag} $v \wedge w$,
but also on the choice of the \emph{pole} $v$ in it.
Thus, for example, $\bK(w,v)$ may be different from $\bK(v,w)$.
We further define the Ricci curvature as follows.

\begin{definition}[Ricci curvature]\label{df:Ric}
For a unit vector $v \in U_xM$,
we define the \emph{Ricci curvature} as the trace of the flag curvature
with respect to $g_v$, namely
\[ \Ric(v) :=\sum_{i=1}^{n-1} \bK(v,e_i), \]
where $\{e_i\}_{i=1}^{n-1} \cup \{v\}$ is orthonormal with respect to $g_v$.
We also define $\Ric(cv):=c^2 \Ric(v)$ for $c \ge 0$.
\end{definition}

As usual, given $K \in \R$,
the bound $\Ric \ge K$ will mean that $\Ric(v) \ge KF^2(v)$ holds for all $v \in TM$.
We remark that this curvature bound is common to $F$ and $\rev{F}$
since one can see $\rev{\Ric}(v)=\Ric(-v)$ from Theorem~\ref{th:curv}
(recall Definition~\ref{df:rev} for the definitions of $\rev{F}$ and $\rev{\Ric}$).

\begin{remark}\label{rm:scalar}
The appropriate notion of \emph{scalar curvature} is still missing in the Finsler setting.
This is one of the main obstructions to develop the theory of \emph{Finsler--Ricci flow}.
\end{remark}

Comparing Proposition~\ref{pr:covd} and \eqref{eq:Jacobi},
one finds the following quite important and useful property.

\begin{theorem}[A Riemannian characterization of Finsler curvatures]\label{th:curv}
Given a unit vector $v \in U_xM$, take a $\cC^{\infty}$-vector field $V$
on a neighborhood $U$ of $x$ such that $V(x)=x$ and all integral curves of $V$ are geodesic.
Then, for any $w \in T_xM$ linearly independent from $v$,
the flag curvature $\bK(v,w)$ coincides with the sectional curvature
of the $2$-plane $v \wedge w$ with respect to the Riemannian metric $g_V$ on $U$.
Similarly, $\Ric(v)$ coincides with the Ricci curvature of $v$ with respect to $g_V$.
\end{theorem}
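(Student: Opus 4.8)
The plan is to derive everything from Proposition~\ref{pr:covd} together with the Jacobi equation \eqref{eq:Jacobi}. The key observation is that both sides of the comparison -- the Finsler flag/Ricci curvature on one hand, and the Riemannian sectional/Ricci curvature of $g_V$ on the other -- can be computed from the same Jacobi fields along geodesics, provided those Jacobi fields are produced by geodesic variations through integral curves of $V$. So first I would fix the unit vector $v \in U_xM$ and the vector field $V$ with $V(x)=v$ whose integral curves are all geodesics (such a $V$ exists locally: extend $v$ along the geodesic $\eta$ it generates, then spread out transversally, e.g.\ by taking $V$ to be $\del\sigma/\del t$ for a suitable geodesic variation, or simply by solving the appropriate first-order PDE near $x$; this is a standard construction and I would only remark on it). Denote by $\eta$ the integral curve of $V$ with $\eta(0)=x$.

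Next I would take an arbitrary $w \in T_xM$ linearly independent from $v$ and build a geodesic variation $\sigma(t,s)$ of $\eta$ with variational field $J$ satisfying $J(0)=w$ (and, say, $D^{\dot\eta}_{\dot\eta}J(0)=0$, or whatever normalization is convenient). By \eqref{eq:Jacobi}, $J$ solves $D^{\dot\eta}_{\dot\eta}D^{\dot\eta}_{\dot\eta}J + R_{\dot\eta}(J) = 0$. Now the crucial input from Proposition~\ref{pr:covd}: along $\eta$, which is an integral curve of $V$, we have $D_V^V W = D_V^{g_V} W$ for every $\cC^1$-vector field $W$. Applying this twice, the Finsler Jacobi operator $D^{\dot\eta}_{\dot\eta}D^{\dot\eta}_{\dot\eta}$ acting on vector fields along $\eta$ agrees with the Riemannian second covariant derivative $D^{g_V}_{\dot\eta}D^{g_V}_{\dot\eta}$. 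Moreover $\eta$ is a $g_V$-geodesic (the ``in particular'' after Proposition~\ref{pr:covd}), and its $g_V$-Jacobi fields are exactly the variational fields of $g_V$-geodesic variations; since $g_V$-geodesics through points of $\eta$ coincide with the Finsler geodesics $\sigma(\cdot,s)$ near $\eta$ (again Proposition~\ref{pr:covd}, or rather its geodesic-equation consequence), $J$ is simultaneously a Finsler Jacobi field and a $g_V$-Jacobi field. Comparing the two Jacobi equations forces $R_{\dot\eta}(J) = R^{g_V}_{\dot\eta}(J)$ for all such $J$, and since $w=J(0)$ was arbitrary, $R_v = R^{g_V}_v$ as endomorphisms of $T_xM$.

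Once $R_v = R^{g_V}_v$ is established, the rest is bookkeeping: $g_v = g_V(x)$ by the very definition \eqref{eq:gv} of $g_V$, so Definition~\ref{df:flag} gives
\[
\bK(v,w) = \frac{g_v(R_v(w),w)}{F^2(v)g_v(w,w)-g_v(v,w)^2}
 = \frac{g_V(R^{g_V}_v(w),w)}{g_V(v,v)g_V(w,w)-g_V(v,w)^2},
\]
which is precisely the $g_V$-sectional curvature of the plane $v\wedge w$. Taking the trace over a $g_v$-orthonormal basis $\{e_i\}_{i=1}^{n-1}$ of $v^\perp$ then yields $\Ric(v) = \Ric^{g_V}(v)$ from Definition~\ref{df:Ric}, using that $g_v$-orthonormality is $g_V$-orthonormality at $x$.

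The main obstacle is the careful justification that the same field $J$ is a Jacobi field for \emph{both} structures, i.e.\ pinning down that $D^{\dot\eta}_{\dot\eta}D^{\dot\eta}_{\dot\eta}J$ (where the outer derivative acts on a vector field that is itself a covariant derivative, hence not obviously an integral-curve-adapted field) still coincides with the $g_V$-expression. Here one must check that Proposition~\ref{pr:covd} is being applied with the correct reference vectors at each stage -- the inner $D_V^V J$ is covered directly, but for the outer derivative one needs that $D_V^{g_V}J$ is again a $\cC^1$-field along $\eta$ to which the proposition applies, which it is. A second, more technical point is ensuring that the transversal geodesics $\sigma(\cdot,s)$ are indeed $g_V$-geodesics and not merely Finsler geodesics; this follows because $g_V$-geodesics starting from points near $x$ in directions near $v$ are $C^1$-close to $\eta$, and on that neighborhood Proposition~\ref{pr:covd} identifies the two geodesic equations, but making this rigorous requires a short continuity/uniqueness argument for ODEs. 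Everything else is a direct substitution into the definitions.
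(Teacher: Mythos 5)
Your overall strategy --- deriving the theorem by comparing Proposition~\ref{pr:covd} with the Jacobi equation \eqref{eq:Jacobi} --- is exactly the route the paper intends (the paper itself only points to \cite[\S 6]{Shlec} for details). However, there is a genuine gap at your central step. You take an arbitrary Finsler geodesic variation $\sigma(t,s)$ of $\eta$ with $J(0)=w$ and claim that the transversal curves $\sigma(\cdot,s)$ are also $g_V$-geodesics, so that $J$ is a Jacobi field for both structures. This is false in general: Proposition~\ref{pr:covd} identifies $D^V_V$ with $D^{g_V}_V$ only along integral curves of $V$, i.e.\ only when the tangent of the curve is the reference vector $V$ itself. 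A Finsler geodesic through a point of $\eta$ whose initial vector merely lies near $V(x)$ is not an integral curve of $V$; along it the connection coefficients $\Gamma^i_{jk}(\dot{\sigma})$ and the Christoffel symbols of $g_V$ differ by Cartan-tensor terms, and no amount of $\cC^1$-closeness to $\eta$ or ODE uniqueness removes this discrepancy, so the repair you sketch at the end does not work.

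The fix is to adapt the variation to $V$ from the start: take any curve $c(s)$ with $c(0)=x$, $c'(0)=w$, and set $\sigma(t,s):=\varphi_t(c(s))$, where $\varphi_t$ is the flow of $V$. Each transversal curve is then an integral curve of $V$, hence a Finsler geodesic by hypothesis and a $g_V$-geodesic by Proposition~\ref{pr:covd}, so $J=\del \sigma/\del s(\cdot,0)$ satisfies both Jacobi equations and $J(0)=w$. (You then cannot also prescribe $D^{\dot{\eta}}_{\dot{\eta}}J(0)=0$, but no normalization is needed: two applications of Proposition~\ref{pr:covd} give $D^{\dot{\eta}}_{\dot{\eta}}D^{\dot{\eta}}_{\dot{\eta}}J=D^{g_V}_{\dot{\eta}}D^{g_V}_{\dot{\eta}}J$ along $\eta$, and evaluating the two Jacobi equations at $t=0$ yields $R_v(w)=R^{g_V}(w,v)v$ directly.) With this change the remaining bookkeeping in your last step --- $g_v=g_V(x)$ since $V(x)=v$, the flag/sectional identification, and the trace argument for $\Ric(v)$ --- is correct; note also that the statement's ``$V(x)=x$'' is a typo for $V(x)=v$, which you silently and rightly corrected.
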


We in particular find that the sectional curvature of $v \wedge w$
with respect to $g_V$ does not depend on the choice of $V$,
where the assumption on $V$ (all integral curves are geodesic) is playing the essential role.
The proof of Theorem~\ref{th:curv} can be found in \cite[\S 6]{Shlec},
which inspired the structure of this section.
The characterization in the manner of Theorem~\ref{th:curv} goes back to at least \cite{Au}.

Via Theorem~\ref{th:curv}, one can \emph{reduce} some results
on Finsler manifolds to the Riemannian case.
The most fundamental ones are the following (obtained in \cite{Au}),
we will see some more (concerning the weighted Ricci curvature) in the next subsection.

\begin{theorem}[Cartan--Hadamard theorem]\label{th:CH}
Let $(M,F)$ be a simply-connected and forward complete Finsler manifold
with nonpositive flag curvature $\bK \le 0$.
Then the exponential map $\exp_x:T_xM \lra M$ is a $\cC^1$-diffeomorphism for every $x \in M$.
\end{theorem}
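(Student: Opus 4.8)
The plan is to reduce the Finsler statement to the classical Riemannian Cartan--Hadamard theorem by exploiting Theorem~\ref{th:curv}. The key point is that for any fixed $x \in M$ and any unit vector $v \in U_xM$, one can choose a $\cC^{\infty}$-vector field $V$ on a neighborhood of the geodesic $\eta_v(t) := \exp_x(tv)$ all of whose integral curves are geodesics, and then the flag curvature along $\eta_v$ agrees with the sectional curvature of the Riemannian metric $g_V$. Thus the hypothesis $\bK \le 0$ transfers to nonpositivity of the sectional curvature of $g_V$ along each such geodesic, which is exactly the input needed to control the Jacobi equation \eqref{eq:Jacobi}.

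First I would show that $\exp_x$ is defined on all of $T_xM$; this is immediate from forward completeness. Next, I would prove that $\exp_x$ is a local diffeomorphism (in fact a $\cC^1$-local diffeomorphism, since $\exp_x$ is only $\cC^1$ at the origin in the Finsler setting) by showing that $d(\exp_x)_v$ is nonsingular for every $v \in T_xM$. This is the heart of the matter: a nonzero vector in the kernel of $d(\exp_x)_v$ would produce a nontrivial Jacobi field $J$ along $\eta_v$ with $J(0)=0$ and $J(1)=0$, i.e.\ a pair of conjugate points. Choosing $V$ as above and invoking Proposition~\ref{pr:covd}, the Finsler Jacobi equation \eqref{eq:Jacobi} along $\eta_v$ becomes the Riemannian Jacobi equation for $g_V$; since $g_V$ has nonpositive sectional curvature along $\eta_v$, the standard index-form / convexity argument (the function $t \mapsto g_V(J(t),J(t))$ is convex, or one integrates $\tfrac{d^2}{dt^2}|J|^2 \ge 0$) forces $J \equiv 0$, a contradiction. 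Hence there are no conjugate points and $\exp_x$ is everywhere a local $\cC^1$-diffeomorphism.

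Finally, to upgrade ``local diffeomorphism'' to ``global diffeomorphism,'' I would pull back the Finsler metric $F$ to $T_xM$ via $\exp_x$ and argue that $\exp_x$ is a covering map: being a local diffeomorphism from a forward complete space, radial geodesics from the origin lift and stay defined for all time, which gives the path-lifting property needed to conclude $\exp_x$ is a covering. Since $M$ is simply connected and connected, and $T_xM$ is connected, the covering must be trivial, i.e.\ $\exp_x$ is a bijective $\cC^1$-local diffeomorphism, hence a $\cC^1$-diffeomorphism.

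The main obstacle is the completeness/covering-map step rather than the curvature computation: in the Finsler world the distance is asymmetric and $\exp_x$ is merely $\cC^1$ at $0$, so one must be careful that the pulled-back structure on $T_xM$ is still forward complete and that a local-diffeomorphism-plus-completeness argument genuinely yields a covering map in this asymmetric, lower-regularity setting. Once that is handled, everything else follows by transferring the Jacobi equation to $g_V$ and quoting the Riemannian result.
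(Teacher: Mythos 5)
Your proposal is correct and takes essentially the same route as the paper, which says Theorem~\ref{th:CH} is shown ``in a similar manner'' to its outline for Theorem~\ref{th:BM}: one uses Theorem~\ref{th:curv} (with Proposition~\ref{pr:covd}) to identify the Jacobi equation and flag curvature along each radial geodesic with those of the Riemannian metric $g_V$ for a local geodesic extension $V$ of $\dot{\eta}$, and then runs the classical Riemannian argument. Your explicit no-conjugate-point step (convexity of $t \mapsto g_{\dot{\eta}}(J,J)$) and the completeness/covering-map step are exactly what that reduction amounts to, so this is the intended proof spelled out rather than a different one.
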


\begin{theorem}[Bonnet--Myers theorem]\label{th:BM}
If $(M,F)$ is forward complete and satisfies $\Ric \ge K$ for some $K>0$, then we have
\[ \diam(M) :=\sup_{x,y \in M} d(x,y) \le \pi \sqrt{\frac{n-1}{K}}. \]
In particular, $M$ is compact.
\end{theorem}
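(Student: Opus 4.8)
The plan is to mimic the classical Riemannian proof of the Bonnet--Myers theorem, using the Riemannian characterization of the Ricci curvature from Theorem~\ref{th:curv} to reduce the second-variation argument to a Riemannian computation along a single geodesic. Since $(M,F)$ is forward complete, the Hopf--Rinow theorem guarantees that any two points $x,y\in M$ are joined by a minimal geodesic, so it suffices to bound the length of an arbitrary minimal unit-speed geodesic $\eta:[0,\ell]\lra M$ by $\pi\sqrt{(n-1)/K}$; taking the supremum over $x,y$ then gives the diameter bound, and compactness follows since a forward complete space of finite diameter is bounded and hence (again by Hopf--Rinow) compact.

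First I would fix such a minimal geodesic $\eta$ and invoke Theorem~\ref{th:curv}: extend $\dot\eta$ to a $\cC^\infty$-vector field $V$ on a neighborhood of the image of $\eta$ all of whose integral curves are geodesic (for instance by using the exponential map transversally to $\eta$), so that $\eta$ is also a geodesic for the Riemannian metric $g_V$, and the $F$-Ricci curvature $\Ric(\dot\eta(t))$ equals the $g_V$-Ricci curvature of $\dot\eta(t)$. The hypothesis $\Ric\ge K$ then yields $\mathrm{Ric}^{g_V}(\dot\eta(t))\ge K$ along $\eta$. Moreover $\eta$ is $g_V$-minimizing among nearby curves as well: its $g_V$-length equals its $F$-length (since $g_V(\dot\eta,\dot\eta)=F^2(\dot\eta)$), and any competitor has $g_V$-length at least its $F$-length minus a controllable discrepancy — more cleanly, one runs the second variation argument directly and uses only that $\eta$ has no conjugate point before $\ell$, which is forced by minimality together with the Jacobi equation \eqref{eq:Jacobi} (which for $g_V$ along $\eta$ coincides with the $F$-Jacobi equation via Proposition~\ref{pr:covd}).

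Next I would run the standard index-form estimate for $g_V$. Pick a $g_V$-parallel orthonormal frame $\{E_i\}_{i=1}^{n-1}$ along $\eta$ orthogonal to $\dot\eta$, set $W_i(t):=\sin(\pi t/\ell)\,E_i(t)$, and compute the second variation of $g_V$-arclength in the direction $W_i$. Summing over $i$ and using $\mathrm{Ric}^{g_V}(\dot\eta)\ge K$, one obtains
\[
0\le \sum_{i=1}^{n-1} I(W_i,W_i)
= \int_0^\ell \Big( (n-1)\frac{\pi^2}{\ell^2}\cos^2\!\Big(\frac{\pi t}{\ell}\Big)
 - \mathrm{Ric}^{g_V}(\dot\eta)\sin^2\!\Big(\frac{\pi t}{\ell}\Big) \Big)\,dt
\le (n-1)\frac{\pi^2}{\ell^2}\cdot\frac{\ell}{2} - K\cdot\frac{\ell}{2},
\]
which forces $\ell\le\pi\sqrt{(n-1)/K}$. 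The only subtlety — and the main thing to be careful about rather than a genuine obstacle — is justifying that the vector field $V$ with geodesic integral curves exists along the whole minimal geodesic and that the reduction of Jacobi fields and the index form to $g_V$ is legitimate globally along $\eta$ (not just pointwise); this is exactly the content of Theorem~\ref{th:curv} and Proposition~\ref{pr:covd}, so once those are in hand the argument is a verbatim transcription of the Riemannian proof. Asymmetry of $d$ causes no trouble here because we only ever measure the length of $\eta$ traversed in its given direction, and $\Ric\ge K$ is symmetric in $F$ and $\rev F$.
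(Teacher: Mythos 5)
Your proof is correct and follows essentially the same route as the paper: both reduce the statement to Riemannian geometry by extending the geodesic direction to a $\cC^{\infty}$-vector field $V$ whose integral curves are geodesic and invoking Theorem~\ref{th:curv} to get $\mathrm{Ric}^{g_V}(\dot\eta)\ge K$, so that the classical second-variation argument applies along $\eta$. The only difference is cosmetic: the paper takes $V$ to be the radial field of unit-speed geodesics emanating from a fixed point $x$ (up to the cut points) and quotes the Riemannian Bonnet--Myers theorem wholesale, whereas you construct $V$ along a single minimal geodesic and write out the index-form estimate explicitly, correctly handling the minimality/conjugate-point issue via the coincidence of the $F$- and $g_V$-Jacobi equations (Proposition~\ref{pr:covd}).
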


\begin{outline}
Let us outline the proof of Theorem~\ref{th:BM}
(Theorem~\ref{th:CH} is shown in a similar manner).
Fix a point $x \in M$ and consider unit speed geodesics $\eta:[0,l_{\eta}) \lra M$ emanating from $x$.
Here $l_{\eta} \in (0,\infty)$ is taken as the supremum of $t>0$ such that $d(x,\eta(t))=t$.
Define the vector field $V$ by $V(\eta(t)):=\dot{\eta}(t)$ for each $\eta$ and $t \in (0,l_{\eta})$.
Then $V$ is a $\cC^{\infty}$-vector field on an open set $U \subset M \setminus \{x\}$
and all integral curves are geodesic.
Thanks to Theorem~\ref{th:curv}, the Riemannian metric $g_V$ has the Ricci curvature $\ge K$.
Therefore the \emph{Riemannian} Bonnet--Myers theorem applies
and we have $l_{\eta} \le \pi\sqrt{(n-1)/K}$.
This shows the claim.
$\qedd$
\end{outline}

\subsection{Weighted Ricci curvature}\label{ssc:wRic}

In order to discuss further applications of the Ricci curvature,
it is natural to employ a suitable measure on $M$.
At this point, our Finsler setting has another difficulty that
the choice of a canonical measure is not unique.
There are several known ways to generalize the Riemannian volume measure,
canonical in their own rights, leading to the different measures.
Among others the most fundamental examples of constructive measures are
the \emph{Busemann--Hausdorff measure} and the \emph{Holmes--Thompson measure}
(see \cite{AT}).

Our standpoint is that, instead of starting from a constructive measure,
we consider an arbitrary measure (this turns out natural, see Remark~\ref{rm:S-curv}).
Then, inspired by Theorem~\ref{th:curv}
as well as the theory of weighted Ricci curvature
(also called the Bakry--\'Emery--Ricci curvature) of Riemannian manifolds,
one can define the \emph{weighted Ricci curvature} as in \cite{Oint}.
From here on, we fix a positive $\cC^{\infty}$-measure $\fm$ on $M$
(meaning that it is written down as $\fm=\Phi \,dx^1 \cdots dx^n$ in each local coordinates
with some positive $\cC^{\infty}$-function $\Phi$).

\begin{definition}[Weighted Ricci curvature]\label{df:wRic}
Given a unit vector $v \in U_xM$,
let $V$ be a non-vanishing $\cC^{\infty}$-vector field
on a neighborhood $U$ of $x$ such that $V(x)=v$ and
all integral curves of $V$ are geodesic.
We decompose $\fm$ as $\fm=\e^{-\Psi}\vol_{g_V}$ on $U$,
where $\Psi \in \cC^{\infty}(U)$ and $\vol_{g_V}$ is the volume measure of $g_V$.
Denote by $\eta:(-\ve,\ve) \lra M$ the geodesic such that $\dot{\eta}(0)=v$.
Then, for $N \in (-\infty,0) \cup (n,\infty)$, define
\[ \Ric_N(v):=\Ric(v) +(\Psi \circ \eta)''(0) -\frac{(\Psi \circ \eta)'(0)^2}{N-n}. \]
We also define as the limits:
\[ \Ric_{\infty}(v):=\Ric(v) +(\Psi \circ \eta)''(0), \qquad
 \Ric_n(v):=\lim_{N \downarrow n}\Ric_N(v). \]
Finally, for $c \ge 0$, we set $\Ric_N(cv):=c^2 \Ric_N(v)$.
\end{definition}

We will denote by $\Ric_N \ge K$, $K \in \R$, the condition
$\Ric_N(v) \ge KF^2(v)$ for all $v \in TM$.
Some remarks on Definition~\ref{df:wRic} are in order.

\begin{remark}\label{rm:wRic}
(a)
In local coordinates, $\vol_{g_V}$ along $\eta$ is written as
\[ \sqrt{\det(g_{ij}\big( \dot{\eta}) \big)} \,dx^1 \cdots dx^n. \]
Hence we find
\[ \Psi(\eta) =\log \bigg( \frac{\sqrt{\det(g_{ij}(\dot{\eta}))}}{\Phi(\eta)} \bigg),
 \qquad \text{where}\ \fm =\Phi \,dx^1 \cdots dx^n. \]
This expression does not require the vector field $V$
(though we introduced it for the sake of lucidity),
thereby $\Psi \circ \eta$ does not depend on the choice of $V$.

(b)
On a Riemnnian manifold $(M,g)$,
the metric $g_V$ always coincides with the original metric $g$
and hence $\Psi$ is regarded as a function on $M$ such that $\fm=\e^{-\Psi} \vol_g$.
If we choose $\fm=\vol_g$, then $\Ric_N=\Ric$ regardless of the choice of $N$.

(c)
Multiplying the measure $\fm$ with a positive constant
does not change the weighted Ricci curvature.
Therefore we can freely normalize $\fm$ as $\fm(M)=1$ when $\fm(M)<\infty$.

(d)
Notice that $\Ric_N$ is non-decreasing in $N$ in the ranges $[n,\infty]$ and $(-\infty,0)$,
and we have
\[ \Ric_n \le \Ric_N \le \Ric_{\infty} \le \Ric_{N'} \qquad
 \text{for}\ N \in (n,\infty),\ N'<0. \]
Therefore, $\Ric_{N'} \ge K$ with $N'<0$ is a weaker condition
than $\Ric_N \ge K$ for $N \in (n,\infty)$.
In the Riemannian case, the study of $\Ric_{\infty}$ goes back to Lichnerowicz \cite{Li},
he showed a Cheeger--Gromoll type splitting theorem
(see \cite{FLZ,WW,Osplit} for some generalizations).
The range $N \in (n,\infty)$ has been well investigated by Bakry \cite[\S 6]{Ba},
Qian \cite{Qi} and many others.
The study of the range $N \in (-\infty,0)$ is more recent.
We refer to \cite{Mineg} for isoperimetric inequalities,
\cite{Oneg,Oneedle} for the curvature-dimension condition (with $N \le 0$),
and \cite{Wy} for splitting theorems (with $N \in (-\infty,1]$).
Some historical accounts on related works concerning $N<0$
in convex geometry and partial differential equations can be found in \cite{Mineg,Miharm}.
\end{remark}

Related to (b) above,
a reason why we consider it natural to begin with an arbitrary measure
is explained as follows.

\begin{remark}[$\bS$-curvature]\label{rm:S-curv}
For Finsler manifolds of \emph{Berwald type}
(i.e., $\Gamma_{ij}^k$ is constant on each $T_xM \setminus 0$),
the Busemann--Hausdorff measure satisfies $(\Psi \circ \eta)' \equiv 0$
(in other words, the \emph{$\bS$-curvature} vanishes, see \cite[\S 7.3]{Shlec}).
In general, however, there may not exist any measure whose $\bS$-curvature vanishes
(see \cite{ORand} for such an example among Randers spaces).
We regard this fact as the non-existence of a canonical measure,
thus we began with an arbitrary measure.
\end{remark}

The weighted version of the Bonnet--Myers theorem (Theorem~\ref{th:BM})
can be shown in the same manner as the unweighted case.

\begin{theorem}[Bonnet--Myers theorem]\label{th:wBM}
If $(M,F,\fm)$ is forward complete and
satisfies $\Ric_N \ge K$ for some $K>0$ and $N \in [n,\infty)$,
then we have
\[ \diam(M) \le \pi \sqrt{\frac{N-1}{K}}. \]
In particular, $M$ is compact.
\end{theorem}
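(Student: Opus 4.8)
The plan is to imitate the outline of the unweighted Bonnet--Myers theorem (Theorem~\ref{th:BM}), replacing the Riemannian Bonnet--Myers input by its weighted (Bakry--\'Emery) counterpart. Concretely, fix $x \in M$ and for each unit vector $v \in U_xM$ let $\eta_v:[0,l_v) \lra M$ be the unit-speed geodesic with $\dot\eta_v(0)=v$, where $l_v \in (0,\infty]$ is the supremum of $t>0$ with $d(x,\eta_v(t))=t$. On the open set $U \subset M \setminus \{x\}$ swept out by the interiors of these geodesics the field $V$ defined by $V(\eta_v(t)):=\dot\eta_v(t)$ is a well-defined $\cC^\infty$ vector field all of whose integral curves are geodesics. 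By Theorem~\ref{th:curv}, $\Ric(v)$ equals the Riemannian Ricci curvature of $g_V$ in the direction $v$; and by Definition~\ref{df:wRic} together with Remark~\ref{rm:wRic}(a), writing $\fm=\e^{-\Psi}\vol_{g_V}$ on $U$ we have $\Ric_N(\dot\eta_v) = \Ric_N^{g_V,\Psi,N}(\dot\eta_v)$, i.e.\ the weighted Ricci curvature of $F$ agrees with the Bakry--\'Emery $N$-Ricci curvature of the weighted Riemannian manifold $(U,g_V,\e^{-\Psi}\vol_{g_V})$ along $\eta_v$. Hence $\Ric_N \ge K$ passes to the weighted-Riemannian structure along each such geodesic.

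The second step is to invoke the weighted Riemannian Bonnet--Myers theorem (due to Qian, and to Bakry--Ledoux; see Remark~\ref{rm:wRic}(d)): on a weighted Riemannian manifold with $N$-Bakry--\'Emery--Ricci curvature $\ge K>0$ and $N \in [n,\infty)$, every minimizing geodesic has length at most $\pi\sqrt{(N-1)/K}$. Since $\eta_v$ is minimizing on $[0,l_v)$ and sits inside $(U,g_V)$ with the curvature bound above, this forces $l_v \le \pi\sqrt{(N-1)/K}$ for every $v \in U_xM$. Because $M$ is forward complete, the Hopf--Rinow theorem guarantees that any $y \in M$ is joined to $x$ by a minimal geodesic, which is (a reparametrization of) some $\eta_v$; therefore $d(x,y) \le l_v \le \pi\sqrt{(N-1)/K}$. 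As $x$ was arbitrary, $\diam(M) \le \pi\sqrt{(N-1)/K}$, and compactness follows since a forward complete Finsler manifold of finite diameter has compact closed forward balls, hence is compact.

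The only genuinely delicate point is the clean identification, along a single geodesic, of the Finsler weighted Ricci curvature $\Ric_N$ with the Bakry--\'Emery $N$-Ricci tensor of $(g_V,\e^{-\Psi}\vol_{g_V})$, and the verification that the second-order data $(\Psi\circ\eta_v)'(0)$, $(\Psi\circ\eta_v)''(0)$ entering Definition~\ref{df:wRic} are exactly the radial derivatives of the weight seen by the one-dimensional comparison (Riccati/Bochner) argument underlying the weighted Bonnet--Myers estimate. This is precisely what Remark~\ref{rm:wRic}(a) is set up to provide --- $\Psi\circ\eta_v$ depends only on the geodesic, not on the auxiliary field $V$ --- so once that remark is in hand the reduction is routine. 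A minor technical nuisance, handled exactly as in the unweighted outline, is that $V$ is smooth only on $M\setminus\{x\}$ away from the cut locus; but the comparison estimate only needs $g_V$ along the open segment $(0,l_v)$, where $V$ is smooth, so no difficulty arises. Thus the proof is a one-line reduction via Theorem~\ref{th:curv} and Definition~\ref{df:wRic} to the weighted Riemannian theorem, in complete parallel with the proof of Theorem~\ref{th:BM}.
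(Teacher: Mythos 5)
Your argument is correct and is essentially the paper's own proof: the paper proves Theorem~\ref{th:wBM} ``in the same manner as the unweighted case,'' i.e.\ by the radial vector field $V$ of unit-speed geodesics from $x$, Theorem~\ref{th:curv} together with Definition~\ref{df:wRic} to identify $\Ric_N$ along each geodesic with the Bakry--\'Emery $N$-Ricci curvature of $(g_V,\e^{-\Psi}\vol_{g_V})$, and the weighted Riemannian Bonnet--Myers theorem. Your remarks on the radial-direction identification of the weight derivatives and on smoothness of $V$ off the cut locus are exactly the points the paper's reduction relies on.
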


From this estimate $N$ can be regarded as (an upper bound of) the dimension
(though this interpretation prevents us from considering $N<0$).
We can further control the volume growth for $\fm$.
For $x \in M$ and $r>0$, we define
\[ B^+(x,r):=\{ y \in M \,|\, d(x,y)<r \}. \]
We also introduce the function $\bs_{\kappa}$ defined by
\[ \bs_{\kappa}(\theta) := \left\{
 \begin{array}{cl}
 \frac{1}{\sqrt{\kappa}} \sin(\sqrt{\kappa}\theta) & \text{if}\ \kappa>0, \\
 \theta & \text{if}\ \kappa=0, \\
 \frac{1}{\sqrt{-\kappa}} \sinh(\sqrt{-\kappa}\theta) & \text{if}\ \kappa<0,
 \end{array} \right. \]
where $\theta \ge 0$ if $\kappa \le 0$ and
$\theta \in [0,\pi/\sqrt{\kappa}]$ if $\kappa>0$.

\begin{theorem}[Bishop--Gromov volume comparison]\label{th:BG}
Suppose that $(M,F,\fm)$ is \linebreak
forward complete and satisfies $\Ric_N \ge K$
for some $K \in \R$ and $N \in [n,\infty)$.
Then we have
\[ \frac{\fm(B^+(x,R))}{\fm(B^+(x,r))} \le
 \frac{\int_0^R \bs_{K/(N-1)}(t)^{N-1} \,dt}{\int_0^r \bs_{K/(N-1)}(t)^{N-1} \,dt} \]
for any $x \in M$ and $0<r<R$,
where $R \le \pi\sqrt{(N-1)/K}$ if $K>0$.
\end{theorem}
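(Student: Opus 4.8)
The plan is to reduce the Finsler Bishop--Gromov inequality to the Riemannian one by the same localization trick used in the proof of Theorem~\ref{th:wBM}. Fix $x \in M$. For each unit vector $v \in U_xM$, let $\eta_v:[0,l_v) \lra M$ be the unit-speed geodesic with $\dot\eta_v(0)=v$, where $l_v \in (0,\infty]$ is the supremum of those $t$ with $d(x,\eta_v(t))=t$ (i.e.\ $\eta_v$ is minimal on $[0,l_v)$); by forward completeness this is the usual cut value. These geodesics foliate an open, full-measure subset $U \subset M \setminus \{x\}$ (the complement of the cut locus), and on $U$ we obtain a $\cC^\infty$ vector field $V$ with $V(\eta_v(t))=\dot\eta_v(t)$, all of whose integral curves are geodesic. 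By Theorem~\ref{th:curv} the Riemannian metric $g_V$ on $U$ has $\Ric(g_V) = \Ric_F$ along the radial directions, and by Definition~\ref{df:wRic} and Remark~\ref{rm:wRic}(a) the density $\Psi$ defined by $\fm = \e^{-\Psi}\vol_{g_V}$ satisfies exactly $\Ric_N^{g_V,\Psi}(\dot\eta_v) = \Ric_N(\dot\eta_v) \ge K$ in the sense of the weighted Riemannian Bakry--\'Emery tensor. Thus $(U,g_V,\fm)$ is a weighted Riemannian manifold satisfying $\Ric_N \ge K$ along all the radial geodesics through $x$.

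Next I would write $\fm$ in geodesic polar coordinates adapted to $V$. Along $\eta_v$ write $\fm = \hat\fm_v(t)\,dt\,d\mathrm{vol}_{U_xM}(v)$ for the appropriate Jacobian density $\hat\fm_v$; this is the weighted area of the geodesic sphere in direction $v$. The classical weighted Riemannian computation (the Heintze--Karcher / Bishop argument: differentiate $\log\hat\fm_v$, use the Riccati/matrix Jacobi equation for the second fundamental form of geodesic spheres together with the $N$-Bochner bound coming from $\Ric_N \ge K$) yields that
\[
 t \longmapsto \frac{\hat\fm_v(t)}{\bs_{K/(N-1)}(t)^{N-1}}
\]
is non-increasing on $[0,l_v)$. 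This is precisely where the effective dimension $N$ and the quadratic correction term in Definition~\ref{df:wRic} enter: the term $-(\Psi\circ\eta)'^2/(N-n)$ is exactly what makes the one-dimensional comparison ODE for $\hat\fm_v^{1/(N-1)}$ close up with the model density $\bs_{K/(N-1)}^{N-1}$, just as in the unweighted case with $n$ replaced by $N$. Setting $\hat\fm_v(t):=0$ for $t \ge l_v$ keeps the monotonicity valid globally (the density can only drop at the cut value).

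Finally I would integrate. For $0 < r < R$ (with $R \le \pi\sqrt{(N-1)/K}$ when $K>0$), the monotone-ratio inequality above gives, for each fixed $v$,
\[
 \Big(\!\int_0^R \bs_{K/(N-1)}(t)^{N-1}dt\Big)\!\int_0^r \hat\fm_v(t)\,dt
 \;\ge\;
 \Big(\!\int_0^r \bs_{K/(N-1)}(t)^{N-1}dt\Big)\!\int_0^R \hat\fm_v(t)\,dt,
\]
by the standard Chebyshev-type lemma for a non-increasing function against the two nested intervals. Integrating over $v \in U_xM$ and using $\fm(B^+(x,\rho)) = \int_{U_xM}\int_0^\rho \hat\fm_v(t)\,dt\,d\mathrm{vol}(v)$ (the cut locus having $\fm$-measure zero) yields the claimed bound on $\fm(B^+(x,R))/\fm(B^+(x,r))$.

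The main obstacle is not the ODE comparison itself---that is essentially the weighted Riemannian Bishop argument---but making the localization to $g_V$ fully rigorous near the cut locus: one must check that $V$ is genuinely $\cC^\infty$ on the open set $U$, that the polar decomposition of $\fm$ and the identification of $\hat\fm_v$ with the weighted sphere area are valid, and that truncating $\hat\fm_v$ at $t=l_v$ preserves monotonicity (equivalently, that $\fm$ charges no mass on $\Cut(x)$, which follows from $\Cut(x)$ being $\fm$-null, a standard fact carrying over from the Riemannian case via the exponential map being $\cC^1$). Once these technical points are in place, the inequality drops out by reduction to the weighted Riemannian theorem exactly as in the outline of Theorem~\ref{th:wBM}.
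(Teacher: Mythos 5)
Your proposal is correct and follows exactly the route the paper indicates: reduce to the weighted Riemannian Bishop--Gromov comparison via the metric $g_V$ induced by the field of unit-speed geodesics emanating from $x$, using Theorem~\ref{th:curv} and the fact that radial curves are also $g_V$-geodesics so that the weight term $(\Psi\circ\eta)''$ matches the Bakry--\'Emery Hessian term. The paper only states this reduction in one sentence (as for Theorems~\ref{th:CH}, \ref{th:BM}, \ref{th:wBM}); your polar-coordinate density argument, monotone-ratio lemma, and treatment of the cut locus are the standard details filling in that sketch.
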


Note that the condition $R \le \pi\sqrt{(N-1)/K}$ for $K>0$
does not lose any generality because of Theorem~\ref{th:wBM}.
Similarly to Theorems~\ref{th:CH}, \ref{th:BM}, \ref{th:wBM},
the proof of Theorem~\ref{th:BG} can be reduced to the (weighted) Riemannian situation
by employing the Riemannian metric induced from the unit speed geodesics
emanating from $x$.

\begin{remark}[Curvature-dimension condition]\label{rm:CD}
The original motivation of introducing $\Ric_N$ in \cite{Oint}
was to study Lott, Sturm and Villani's \emph{curvature-dimension condition} $\CD(K,N)$
on Finsler manifolds (see also the surveys \cite{Osurv,Ogren}).
The celebrated condition $\CD(K,N)$ was introduced as a synthetic geometric notion
playing a role of a lower Ricci curvature bound for metric measure spaces (\cite{StI,StII,LV}).
On a complete Riemannian manifold with an arbitrary measure $(M,g,\fm)$,
the condition $\CD(K,N)$ is indeed equivalent to $\Ric_N \ge K$
(see \cite{vRS,StI,StII,LV}, where $N \in [n,\infty]$).
In \cite{Oint} this equivalence was generalized to Finsler manifolds $(M,F,\fm)$,
with applications including the Bishop--Gromov volume comparison
and some functional inequalities (see also Section~\ref{sc:funct} for the latter).
As we have already mentioned in Remark~\ref{rm:wRic}(d),
the curvature-dimension condition can be extended to $N \in (-\infty,0]$
and is again equivalent to $\Ric_N \ge K$ (\cite{Oneg,Oneedle}).

The theory of metric measure spaces satisfying $\CD(K,N)$
is making a deep and breathtaking progress in this decade.
We refer to Villani's book \cite{Vi} as a fundamental reference,
and Cavalletti--Mondino's L\'evy--Gromov type isoperimetric inequalities
(\cite{CMisop}, see also Section~\ref{sc:isop})
as one of the most notable achievements after \cite{Vi}.
Since the condition $\CD(K,N)$ could not rule out Finsler manifolds,
a reinforced version of $\CD(K,N)$ coupled with the linearity of the heat semigroup
was introduced and is called the \emph{Riemannian curvature-dimension condition}
$\RCD(K,N)$ (\cite{AGSrcd,EKS}).
In this Riemannian framework, many finer results such as
Gigli's splitting theorem \cite{G-split} and second order calculus \cite{G-nsdg}
as well as Mondino--Naber's rectifiability \cite{MN} are known.
\end{remark}

\section{Nonlinear Laplacian and heat semigroup}\label{sc:heat}

At this section we begin geometric analysis on Finsler manifolds.
The main object of the section is the Bochner--Weitzenb\"ock formula (Theorem~\ref{th:BW}),
which is the indispensable ingredient of our nonlinear analogue of the $\Gamma$-calculus.

\subsection{Gradient vectors, Hessian and Laplacian}\label{ssc:Lap}

Let us denote by $\cL^*:T^*M \lra TM$ the \emph{Legendre transform}
associated with $F$.
That is to say, $\cL^*$ is sending $\alpha \in T_x^*M$ to the unique element $v \in T_xM$
such that $F(v)=F^*(\alpha)$ and $\alpha(v)=F^*(\alpha)^2$.
In coordinates we can write down
\[ \cL^*(\alpha)=\sum_{i,j=1}^n g_{ij}^*(\alpha) \alpha_j \frac{\del}{\del x^i}\Big|_x
 =\sum_{i=1}^n \frac{1}{2} \frac{\del[(F^*)^2]}{\del \alpha_i}(\alpha) \frac{\del}{\del x^i}\Big|_x \]
for $\alpha \in T_x^*M \setminus 0$
(the latter expression makes sense also at $0$ as $\cL^*(0)=0$).
Note that
\[ g^*_{ij}(\alpha) =g^{ij}\big( \cL^*(\alpha) \big)
 \qquad \text{for}\ \alpha \in T_x^*M \setminus 0 \]
(recall that $(g^{ij}(v))$ denotes the inverse matrix of $(g_{ij}(v))$).
The map $\cL^*|_{T^*_xM}$ is being a linear operator only when $F|_{T_xM}$
comes from an inner product.

For a differentiable function $u:M \lra \R$, the \emph{gradient vector}
at $x$ is defined as the Legendre transform of the derivative of $u$:
$\Grad u(x):=\cL^*(Du(x)) \in T_xM$.
If $Du(x) \neq 0$, then we can write down in coordinates as
\[ \Grad u(x)
 =\sum_{i,j=1}^n g^*_{ij}\big( Du(x) \big)
 \frac{\del u}{\del x^j}(x) \frac{\del}{\del x^i}\Big|_x. \]
We need to be careful when $Du(x)=0$,
because $g^*_{ij}(Du(x))$ is not defined as well as
the Legendre transform $\cL^*$ is only continuous at the zero section.
Thus we set for later use
\[ M_u:=\{ x \in M \,|\, Du(x) \neq 0 \}. \]
For a twice differentiable function $u:M \lra \R$ and $x \in M_u$,
we define a kind of \emph{Hessian} $\Grad^2 u(x) :T_xM \lra T_xM$
by using the covariant derivative \eqref{eq:covd} as
\[ \Grad^2 u(v):=D^{\Grad u}_v (\Grad u)(x) \in T_xM, \qquad v \in T_xM. \]
The linear operator $\Grad^2 u(x)$ is symmetric in the sense that
\[ g_{\Grad u}\big( \Grad^2 u(v),w \big)=g_{\Grad u}\big( v,\Grad^2 u(w) \big) \]
for all $v,w \in T_xM$ with $x \in M_u$
(see, for example, \cite[Lemma~2.3]{OSbw}).

Define the \emph{divergence} of a differentiable vector field $V$ on $M$
with respect to the measure $\fm$ by
\[ \div_{\fm} V:=\sum_{i=1}^n \bigg( \frac{\del V^i}{\del x^i} +V^i \frac{\del \Phi}{\del x^i} \bigg),
 \qquad V=\sum_{i=1}^n V^i \frac{\del}{\del x^i}, \]
where we decomposed $\fm$ as $d\fm=\e^{\Phi} \,dx^1 dx^2 \cdots dx^n$.
One can rewrite this in the weak form as
\[ \int_M \phi \div_{\fm} V \,d\fm =-\int_M D\phi(V) \,d\fm \qquad
 \text{for all}\ \phi \in \cC_c^{\infty}(M), \]
that makes sense for measurable vector fields $V$ with $F(V) \in L_{\loc}^1(M)$.
Then we define the distributional \emph{Laplacian} of $u \in H^1_{\loc}(M)$ by
$\Lap u:=\div_{\fm}(\Grad u)$ in the weak sense that
\[ \int_M \phi\Lap u \,d\fm:=-\int_M D\phi(\Grad u) \,d\fm \qquad
 \text{for all}\ \phi \in \cC_c^{\infty}(M). \]
Notice that the space $H^1_{\loc}(M)$ is defined solely in terms of the differentiable structure of $M$.
Since taking the gradient vector (more precisely, the Legendre transform $\cL^*$)
is a nonlinear operation, our Laplacian $\Lap$ is \emph{nonlinear} unless $F$ is Riemannian.

\begin{remark}\label{rm:Lap}
(a)
In the Riemannian case, the Laplacian $\Lap_{\fm}$ associated with
a measure $\fm=\e^{\Phi} \vol_g$ can be written as
\[ \Lap_{\fm} u=\Delta^g u +g(\Grad u,\Grad \Phi), \]
where $\Delta^g$ is the usual Laplacian with respect to $g$.
Then we call $\Lap_{\fm}$ the \emph{Witten Laplacian} or the \emph{weighted Laplacian}.

(b)
It is also possible to define the Laplacian associated only with $F$,
and regard our Laplacian $\Lap$ as the weighted one with respect to $\fm$.
Such a definition can be found in \cite{Lee} for instance (see also \cite{Oham}),
however, it is more involved than our simple definition $\Lap u=\div_{\fm}(\Grad u)$
that is also natural from the analytic viewpoint.
\end{remark}

\subsection{Bochner--Weitzenb\"ock formula}\label{ssc:BW}

Concerning the relation between the Laplacian and the Ricci curvature,
it is not difficult to show the \emph{Laplacian comparison}
(by essentially reducing to the Riemannian situation like Theorem~\ref{th:BG}, see \cite{OShf}).
This is regarded as an analytic counterpart of the directed version of the Bishop--Gromov volume comparison
(known as the \emph{measure contraction property}, see \cite{Omcp,StII}).

\begin{theorem}[Laplacian comparison]\label{th:Lap}
Let $(M,F)$ be forward complete,
and assume that $\Ric_N \ge 0$ for some $N \in [n,\infty)$.
Then, for any $z \in M$, the function $u(x):=d(z,x)$ satisfies
\[ \Lap u(x) \le \frac{N-1}{d(z,x)} \]
point-wise on $M \setminus (\{z\} \cup \Cut_z)$,
and in the distributional sense on $M \setminus \{z\}$.
\end{theorem}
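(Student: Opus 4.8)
The plan is to reduce the Finsler Laplacian comparison to the weighted Riemannian Laplacian comparison, exactly in the spirit of the proof outlines for Theorems~\ref{th:BM}, \ref{th:wBM}, and \ref{th:BG}. Fix $z \in M$ and set $u(x) := d(z,x)$. On the open set $M \setminus (\{z\} \cup \Cut_z)$ every point $x$ is joined to $z$ by a \emph{unique} minimal geodesic, and $u$ is $\cC^{\infty}$ there. First I would construct the vector field: for $x$ in this open set, let $\eta_x:[0,d(z,x)] \lra M$ be the unit-speed minimal geodesic from $z$ to $x$ and put $V(x) := \dot{\eta}_x(d(z,x)) = \Grad u(x)$. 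Standard Finsler comparison theory (the structure of the cut locus, as in \cite{OShf}) guarantees that $V$ is a $\cC^{\infty}$ non-vanishing vector field on $U := M \setminus (\{z\} \cup \Cut_z)$ whose integral curves are exactly the unit-speed geodesics emanating from $z$, hence geodesics.

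Next I invoke the key rigidity device, Proposition~\ref{pr:covd} together with Theorem~\ref{th:curv}: on $U$ the Riemannian metric $g_V$ induced by $V$ has the property that its geodesics through $z$ coincide with those of $F$, its Ricci curvature in the direction $V$ equals $\Ric(V)$, and — crucially — the weighted Ricci curvature matches as well, because by Remark~\ref{rm:wRic}(a) the quantity $\Psi \circ \eta$ appearing in Definition~\ref{df:wRic} is intrinsic to $\fm$ and $g_V$ along the geodesic. Thus the assumption $\Ric_N \ge 0$ for $F$ transfers to $\Ric_N \ge 0$ for the weighted Riemannian manifold $(U, g_V, \fm)$ along the radial geodesics. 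Moreover $\Grad u$, $\Grad^2 u$, and $\Lap u = \div_{\fm}(\Grad u)$ all depend only on the first covariant derivative of $\Grad u = V$ with the reference vector $V$ itself, so by Proposition~\ref{pr:covd} they coincide with their $g_V$-counterparts at $x$. In particular $\Lap u(x)$ equals the weighted Riemannian Laplacian $\Lap^{g_V}_{\fm} u(x)$ of the $g_V$-distance function from $z$.

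Now I apply the classical \emph{weighted Riemannian} Laplacian comparison theorem (the $\CD(0,N)$ case, due to Bakry--Qian and others, see \cite{Ba,Qi,WW}): on a weighted Riemannian manifold with $\Ric_N \ge 0$, $N \in [n,\infty)$, the distance function $\rho$ from a point satisfies $\Lap_{\fm} \rho \le (N-1)/\rho$ wherever $\rho$ is smooth. Applied at $x \in U$ with $\rho = u$ this yields the pointwise bound $\Lap u(x) \le (N-1)/d(z,x)$. To upgrade this to the distributional statement on all of $M \setminus \{z\}$, I use the standard Calabi-type trick: one shows that $\Cut_z$ has $\fm$-measure zero and that $u$ is locally semiconcave (this follows from $d^2(z,\cdot)$ being $\cC^1$ and the Finsler analogue of the distance-function regularity discussed after the definition of $d$), so that testing against $0 \le \phi \in \cC_c^{\infty}(M \setminus \{z\})$ and integrating the pointwise inequality over $U$ produces no positive boundary contribution from $\Cut_z$; equivalently, the distributional Laplacian is the absolutely continuous part plus a nonpositive singular measure supported on $\Cut_z$. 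This gives $\int_M \phi \Lap u \,d\fm = -\int_M D\phi(\Grad u)\,d\fm \le \int_M \phi \cdot \frac{N-1}{d(z,\cdot)}\,d\fm$ for all nonnegative test functions, which is the claimed distributional inequality.

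The main obstacle is the distributional step, specifically verifying that the singular part of $\Lap u$ along $\Cut_z$ has the correct (nonpositive) sign in the Finsler setting. In the Riemannian case this is classical, but here one must be careful because $F$ is non-reversible and $d^2(z,\cdot)$ is only $\cC^1$ in general, so the semiconcavity argument needs the Finsler-specific regularity results and an analysis of how minimal geodesics behave as $x$ crosses $\Cut_z$; the one-sided nature of the jump in $D u$ across the cut locus, combined with $\fm(\Cut_z) = 0$, is what makes the inequality survive in the weak sense. The reduction itself via Proposition~\ref{pr:covd} and Theorem~\ref{th:curv} is by now routine in this circle of ideas, so the technical heart is packaging the known Finsler cut-locus structure theory to justify passing from the pointwise bound on $U$ to the global distributional bound.
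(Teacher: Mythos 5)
Your proposal takes essentially the same route as the paper, which proves the comparison by reducing to the weighted Riemannian situation via the metric $g_V$ induced by the radial vector field $V=\Grad u$ (whose integral curves are geodesics, so that Proposition~\ref{pr:covd}, Theorem~\ref{th:curv} and Remark~\ref{rm:wRic}(a) make $\Lap u$ and $\Ric_N(V)$ match their $g_V$-counterparts), followed by the standard cut-locus argument for the distributional statement, exactly as indicated in the text and carried out in \cite{OShf}. One minor correction: the local semiconcavity of $d(z,\cdot)$ away from $z$ does not come from the $\cC^1$-regularity of $d^2(z,\cdot)$ at the base point (that statement from \cite{Shvol} concerns only the point $z$ itself), but from smoothness of the exponential map and second-variation estimates; this does not affect the validity of your argument.
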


Denoted by $\Cut_z$ is the \emph{cut locus} of $z$,
which is the set of \emph{cut points} $x=\exp_z(v)$ such that
$\eta(t)=\exp_z(tv)$ is minimal on $[0,1]$ but not minimal on $[0,1+\ve]$ for any $\ve>0$.

A more sophisticated application of our geometric analysis
is the \emph{Bochner--Weitzenb\"ock formula}.
This yields the \emph{Bochner inequality}, the heart of the $\Gamma$-calculus.
The Finsler versions of them were established in \cite{OSbw} (for $N \in [n,\infty]$)
and \cite{Oisop} (for $N<0$).
In order to state the formula, we need the following notations.
Given $f \in H^1_{\loc}(M)$ and a measurable vector field $V$
such that $V \neq 0$ almost everywhere on $M_f$,
we can define the gradient vector field and the Laplacian
on the weighted Riemannian manifold $(M,g_V,\fm)$ by
\[ \nabla^V f:=\left\{ \begin{array}{ll}
 \displaystyle\sum_{i,j=1}^n g^{ij}(V) \frac{\del f}{\del x^j} \frac{\del}{\del x^i}
 & \text{on}\ M_f, \smallskip\\
 0 & \text{on}\ M \setminus M_f, \end{array}\right.
 \qquad \Delta\!^V f:=\div_{\fm}(\nabla^V f), \]
where the latter is in the sense of distributions.
We have $\nabla^{\Grad u}u=\Grad u$ and $\Delta\!^{\Grad u}u=\Lap u$
for $u \in H^1_{\loc}(M)$ (\cite[Lemma~2.4]{OShf}).
We also observe that, given $u, f_1,f_2 \in H^1_{\loc}(M)$,
\begin{equation}\label{eq:f1f2}
Df_2(\nabla^{\Grad u}f_1) =g_{\Grad u}(\nabla^{\Grad u}f_1,\nabla^{\Grad u}f_2)
 =Df_1(\nabla^{\Grad u}f_2)
\end{equation}
(to be precise, in \eqref{eq:f1f2} we replace $\Grad u$ with a non-vanishing vector field
in a measurable way on $M \setminus M_u$).

\begin{theorem}[Bochner--Weitzenb\"ock formula]\label{th:BW}
Given $u \in \cC^{\infty}(M)$, we have
\begin{equation}\label{eq:BW}
\Delta\!^{\Grad u} \bigg[ \frac{F^2(\Grad u)}{2} \bigg] -D[\Lap u](\Grad u)
 =\Ric_{\infty}(\Grad u) +\| \Grad^2 u \|_{\HS(\Grad u)}^2
\end{equation}
as well as
\[ \Delta\!^{\Grad u} \bigg[ \frac{F^2(\Grad u)}{2} \bigg] -D[\Lap u](\Grad u)
 \ge \Ric_N(\Grad u) +\frac{(\Lap u)^2}{N} \]
for $N \in (-\infty,0) \cup [n,\infty]$ point-wise on $M_u$,
where $\|\cdot\|_{\HS(\Grad u)}$ denotes
the Hilbert--Schmidt norm with respect to $g_{\Grad u}$.
\end{theorem}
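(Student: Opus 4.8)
The plan is to reduce the Finsler Bochner--Weitzenb\"ock identity to the classical (weighted) Riemannian Bochner formula by freezing the reference vector, exploiting Theorem~\ref{th:curv} and Proposition~\ref{pr:covd}. Fix $u \in \cC^{\infty}(M)$ and work at a point $x_0 \in M_u$. Since $\Grad u(x_0) \neq 0$, in a neighborhood $U$ of $x_0$ the vector field $V := \Grad u$ is a non-vanishing $\cC^{\infty}$-vector field; however, its integral curves are \emph{not} geodesic in general, so Proposition~\ref{pr:covd} does not apply directly to $V$ itself. The remedy is the standard trick: choose a vector field $W$ on $U$ with $W(x_0) = \Grad u(x_0)$ whose integral curves \emph{are} geodesic (e.g.\ take geodesics emanating in the directions prescribed by $V$ along a hypersurface transverse to $V(x_0)$), and observe that the tensors entering both sides of \eqref{eq:BW} are pointwise expressions in $\Grad u(x_0)$, $D(\Grad u)(x_0)$, and the curvature at $x_0$ — so evaluating them using the Riemannian metric $g_W$ gives the same numbers as $g_{\Grad u}$ at $x_0$ by Theorem~\ref{th:curv} and Proposition~\ref{pr:covd}. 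In particular $\Delta^{\Grad u}[F^2(\Grad u)/2]$ at $x_0$ equals $\Delta^{g_W}[|\nabla^{g_W}\phi|^2/2]$ for a suitable function $\phi$ agreeing with $u$ to second order, and $\Ric_\infty(\Grad u)(x_0)$ equals the weighted Ricci curvature $\Ric^{g_W}_\infty$ of $g_W$ in the direction $\Grad u(x_0)$ by Definition~\ref{df:wRic} and Remark~\ref{rm:wRic}(a).

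The first identity \eqref{eq:BW} then follows from the classical weighted Riemannian Bochner--Weitzenb\"ock formula
\[ \Delta^{g}_{\fm}\Big[ \tfrac{1}{2}|\nabla f|_g^2 \Big] - g(\nabla f, \nabla \Delta^g_{\fm} f)
 = \Ric^{g}_\infty(\nabla f) + \| \Hess_g f \|^2_{\HS}, \]
applied to $(U, g_W, \fm|_U)$ and $f = \phi$. One must check that the left-hand side matches term by term: the term $D[\Lap u](\Grad u)$ requires knowing that $\Lap u$ (the \emph{nonlinear} Finsler Laplacian) coincides with $\Delta^{\Grad u}u = \Delta^{g_W}_{\fm}\phi$ near $x_0$, which holds because $\nabla^{\Grad u}u = \Grad u$ and $\div_{\fm}$ is the same operator; and one must verify that $D[\Lap u](\Grad u)$, which differentiates the nonlinear object $\Lap u$ in the direction $\Grad u$, equals the Riemannian $g_W(\nabla^{g_W}\phi, \nabla^{g_W}\Delta^{g_W}_{\fm}\phi)$ at $x_0$ — this is the delicate point, since differentiating $\Lap u$ a priori sees the variation of the reference vector $\Grad u$ away from $x_0$. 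Here one uses that the Legendre transform $\cL^*$ is smooth on $T^*M \setminus 0$ and that, along the integral curve of $\Grad u$ through $x_0$, the "frozen" and "true" Laplacians differ by a quantity vanishing to first order at $x_0$; this is exactly the computation carried out in \cite{OSbw}, resting on the one-contraction identities for $\Gamma^i_{jk}$ recorded after \eqref{eq:Gamma} and on $g^*_{ij}(\alpha) = g^{ij}(\cL^*(\alpha))$.

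Granting \eqref{eq:BW}, the inequalities are pure linear algebra together with the definition of $\Ric_N$. Write $\Psi$ for the weight along $\eta$ as in Definition~\ref{df:wRic}, so that $\Ric_\infty(\Grad u) = \Ric_N(\Grad u) + (\Psi\circ\eta)'(0)^2/(N-n)$ when $N \in (n,\infty)$, with analogous expressions for $N<0$ and for $N=\infty$ (where there is nothing to prove beyond \eqref{eq:BW}). For the Hilbert--Schmidt term one invokes the trace (Cauchy--Schwarz) inequality: writing $\Lap u = \operatorname{tr}_{g_{\Grad u}}(\Grad^2 u) - (\Psi\circ\eta)'(0)$ — the first piece being the trace of the symmetric operator $\Grad^2 u$ and the second the drift contribution — one gets $\|\Grad^2 u\|^2_{\HS} \ge (\operatorname{tr}\Grad^2 u)^2/n$, and then the standard algebraic manipulation
\[ \frac{(\operatorname{tr}\Grad^2 u)^2}{n} + \frac{(\Psi\circ\eta)'(0)^2}{N-n}
 \ge \frac{\big(\operatorname{tr}\Grad^2 u - (\Psi\circ\eta)'(0)\big)^2}{N} = \frac{(\Lap u)^2}{N} \]
(valid for $N \in [n,\infty)$, with the limiting case $N = n$ forcing $(\Psi\circ\eta)'(0) = 0$, and with the inequality reversing sign appropriately so as still to hold for $N<0$) completes the argument. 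The main obstacle is unquestionably the second paragraph: making rigorous the claim that differentiating the nonlinear Laplacian in its own gradient direction produces precisely the Riemannian expression for the frozen metric, i.e.\ controlling the first-order discrepancy between $\Lap$ and $\Delta^{\Grad u}$ along the flow of $\Grad u$. Everything else is either a direct appeal to Theorem~\ref{th:curv} / Proposition~\ref{pr:covd} or elementary algebra.
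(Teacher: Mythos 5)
There is a genuine gap, and it sits exactly where you flag the ``delicate point.'' Your central claim -- that because $W(x_0)=\Grad u(x_0)$ and $W$ has geodesic integral curves, all terms of \eqref{eq:BW} at $x_0$ can be computed with the frozen metric $g_W$ -- is false for the second-order terms. The quantities $\Delta\!^{\Grad u}[F^2(\Grad u)/2]$, $D[\Lap u](\Grad u)$, $\Grad^2 u$ and (if you insist on $g_W$) $\Hess_{g_W}\phi$ are not ``pointwise expressions in $\Grad u(x_0)$, $D(\Grad u)(x_0)$ and the curvature'': they involve first derivatives of the coefficients $g_{ij}(\cdot)$ evaluated along the reference field, hence the Cartan tensor contracted with the $1$-jet of that field. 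A geodesic field $W$ cannot agree with $\Grad u$ to first order at $x_0$ (geodesy forces $D^W_WW=0$ while in general $D^{\Grad u}_{\Grad u}\Grad u(x_0)\neq 0$), and the resulting discrepancies are Cartan-tensor terms contracted over a free index, so Euler's homogeneity (Theorem~\ref{th:Euler}, \eqref{eq:Av}) does not kill them. This is precisely the caveat of Remark~\ref{rm:g_u}: with the linearization $g_{\Grad u}$ the left-hand side of \eqref{eq:BW} \emph{is} the weighted Riemannian $\Gamma_2$ of $u$ (since $\nabla^{\Grad u}u=\Grad u$ and $\Delta\!^{\Grad u}u=\Lap u$), but then $\Ric_\infty(\Grad u)\neq\Ric^{\Grad u}_\infty(\Grad u)$ and $\Grad^2u\neq\Hess_{g_{\Grad u}}u$, and the whole content of the theorem is the exact compensation of these two mismatches; with your $g_W$ the Ricci term matches (Theorem~\ref{th:curv}) but the left-hand side and the Hessian term no longer do. Your proposal does not exhibit this cancellation -- it asserts the matching and then resolves the acknowledged difficulty by appealing to ``the computation carried out in \cite{OSbw},'' which is the proof itself, so the argument is circular at its core. (For the record, the survey does not prove Theorem~\ref{th:BW} either; it quotes it from \cite{OSbw} for $N\in[n,\infty]$ and \cite{Oisop} for $N<0$, so the only internal guidance is Remark~\ref{rm:g_u}, which warns against exactly this kind of reduction.)

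The final paragraph (deducing the $N$-inequality from \eqref{eq:BW}) is essentially sound: the splitting of $\Ric_\infty$ into $\Ric_N$ plus $(\Psi\circ\eta)'(0)^2/(N-n)$, the bound $\|\Grad^2u\|^2_{\HS(\Grad u)}\ge(\mathrm{tr}_{g_{\Grad u}}\Grad^2u)^2/n$, and the elementary inequality $a^2/n+b^2/(N-n)\ge(a-b)^2/N$ (valid for $N\in(n,\infty)$ and $N<0$) are the standard route. But it too leans on an unproved identity, namely $\Lap u=\mathrm{tr}_{g_{\Grad u}}(\Grad^2u)-(\Psi\circ\eta)'(0)$ at the point, which again requires a genuine argument because the integral curves of $\Grad u$ are not geodesic while $\Psi$ and $\eta$ in Definition~\ref{df:wRic} are defined via a geodesic extension; this is another lemma of \cite{OSbw}, not a triviality. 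So the skeleton of your reduction-plus-algebra plan is reasonable, but the two analytic identities that carry all the weight are exactly the ones left unestablished.
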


In particular, if $\Ric_N \ge K$, then we have
\begin{equation}\label{eq:Boc}
\Delta\!^{\Grad u} \bigg[ \frac{F^2(\Grad u)}{2} \bigg] -D[\Lap u](\Grad u)
 \ge KF^2(\Grad u) +\frac{(\Lap u)^2}{N}
\end{equation}
on $M_u$, that we will call the \emph{Bochner inequality}.
One can further generalize the Bochner--Weitzenb\"ock formula
to a more general class of Hamiltonian systems
(by dropping the positive $1$-homogeneity of $F$, see \cite{Lee,Oham}).

\begin{remark}[$F$ versus $g_{\Grad u}$]\label{rm:g_u}
We stress that \eqref{eq:BW} cannot be reduced to the Bochner--Weitzenb\"ock formula
of the Riemannian metric $g_{\Grad u}$.
In fact, in contrast to the identity $\Delta\!^{\Grad u}u=\Lap u$,
$\Ric_N(\Grad u)$ does not coincide with the weighted Ricci curvature
$\Ric_N^{\Grad u}(\Grad u)$ of the weighted Riemannian manifold $(M,g_{\Grad u},\fm)$.
It is compensated in \eqref{eq:BW} by the fact that $\Grad^2 u$ does not
necessarily coincide with the Hessian of $u$ with respect to $g_{\Grad u}$
(unless all integral curves of $\Grad u$ are geodesic).
\end{remark}

Recall that, even when $u \in \cC^{\infty}(M)$,
$\Grad u$ is only continuous outside $M_u$.
Thus we have to pass to the integrated form, and it is done as follows
(\cite[Theorem~3.6]{OSbw}, \cite{Oisop}).

\begin{theorem}[Integrated form]\label{th:BWint}
Assume $\Ric_N \ge K$ for some $K \in \R$ and $N \in (-\infty,0) \cup [n,\infty]$.
Given $u \in H_0^1(M) \cap H^2_{\loc}(M) \cap \cC^1(M)$
such that $\Lap u \in H_0^1(M)$,
we have
\begin{align}
&-\int_M D\phi \bigg( \nabla^{\Grad u} \bigg[ \frac{F^2(\Grad u)}{2} \bigg] \bigg) \,d\fm
 \nonumber\\
&\ge \int_M \phi \bigg\{ D[\Lap u](\Grad u) +K F^2(\Grad u)
 +\frac{(\Lap u)^2}{N} \bigg\} \,d\fm \label{eq:BWint}
\end{align}
for all nonnegative functions $\phi \in H_{\loc}^1(M) \cap L^{\infty}(M)$.
\end{theorem}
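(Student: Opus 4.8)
The plan is to pass from the pointwise Bochner inequality \eqref{eq:Boc}, valid on the open set $M_u$, to the integrated inequality \eqref{eq:BWint} by multiplying by a test function $\phi \ge 0$ and integrating, using integration by parts on the left-hand side. The delicate point is that the vector field $\Grad u$ is merely continuous (not $\cC^1$, not even in a good Sobolev class) across the boundary $\del M_u$ where $Du$ vanishes, so one cannot naively integrate by parts over all of $M$. The remedy is the standard one: exhaust $M_u$ by sets where $F(\Grad u)$ is bounded below, carry out the computation there with a cutoff, and then let the cutoff approach the indicator of $M_u$, showing the boundary contributions vanish in the limit.

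Concretely, first I would fix $\phi \in H^1_{\loc}(M) \cap L^\infty(M)$ with $\phi \ge 0$. On $M_u$ the function $F^2(\Grad u)/2$ lies in $H^1_{\loc}$ (this is where $u \in H^2_{\loc}(M) \cap \cC^1(M)$ enters, together with the smoothness of $g^*_{ij}$ away from the zero section), and $\nabla^{\Grad u}[F^2(\Grad u)/2]$ is a well-defined $L^1_{\loc}$ vector field there. For $\ve>0$ let $\chi_\ve$ be a Lipschitz cutoff that is $1$ where $F(\Grad u) \ge 2\ve$, $0$ where $F(\Grad u) \le \ve$, with $|D\chi_\ve|$ controlled; multiplying \eqref{eq:Boc} by $\phi \chi_\ve$ and integrating over $M_u$, the divergence-theorem identity $\int_{M_u} \Delta\!^{\Grad u}[\tfrac{F^2(\Grad u)}{2}] \, \phi\chi_\ve \, d\fm = -\int_{M_u} D(\phi\chi_\ve)(\nabla^{\Grad u}[\tfrac{F^2(\Grad u)}{2}]) \, d\fm$ holds because $\phi\chi_\ve$ is compactly supported away from $\del M_u$ inside each coordinate chart. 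Then I would send $\ve \downarrow 0$: the right-hand side of \eqref{eq:Boc} is integrable against $\phi$ (the terms $KF^2(\Grad u)$, $(\Lap u)^2/N$, and $D[\Lap u](\Grad u)$ are all in $L^1$ using $\Lap u \in H^1_0(M)$ and $u \in \cC^1$), and the term $\int D\chi_\ve(\cdots)$ must be shown to tend to $0$. The complement $M \setminus M_u$ contributes nothing because there $\Grad u = 0$, $F^2(\Grad u) = 0$, and $D[\Lap u](\Grad u)=0$ a.e.

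The main obstacle is exactly the vanishing of the cutoff error term $\int_{M_u} \phi \, D\chi_\ve\big(\nabla^{\Grad u}[\tfrac{F^2(\Grad u)}{2}]\big) \, d\fm$ as $\ve \downarrow 0$. One controls $|D\chi_\ve|$ by $C\ve^{-1}$ on the annular region $\{\ve \le F(\Grad u) \le 2\ve\}$, while on that region $|\nabla^{\Grad u}[\tfrac{F^2(\Grad u)}{2}]|$ is of order $F(\Grad u) \cdot |\text{Hess}| \lesssim \ve \, |\Grad^2 u|$; thus the integrand is $\lesssim \phi \, |\Grad^2 u|$ on $\{\ve \le F(\Grad u) \le 2\ve\}$, a set whose measure tends to $0$, and dominated convergence (using $\phi \, |\Grad^2 u| \in L^1_{\loc}$, which follows from $u \in H^2_{\loc}$ and $\phi \in L^\infty$) gives the claim — this is essentially the argument of \cite[Theorem~3.6]{OSbw}. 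Care is also needed to justify that $F^2(\Grad u)/2 \in H^1_{\loc}(M_u)$ and to patch the local coordinate computations into a global statement via a partition of unity; these are routine once the boundary term is handled. Finally, for $N \in (-\infty,0)$ one uses the corresponding pointwise inequality from Theorem~\ref{th:BW} in place of \eqref{eq:Boc}, the rest of the argument being identical.
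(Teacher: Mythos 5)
The survey itself gives no proof of Theorem~\ref{th:BWint}, deferring to \cite[Theorem~3.6]{OSbw} and \cite{Oisop}; your strategy --- work on $M_u$, cut off near $\{Du=0\}$, kill the cutoff error by dominated convergence, and argue that $M\setminus M_u$ contributes nothing --- is in the same spirit (the crossing of $\del M_u$ is indeed the announced difficulty). However, your starting point contains a genuine gap: you invoke the pointwise inequality \eqref{eq:Boc} as ``valid on the open set $M_u$'', but Theorem~\ref{th:BW} provides \eqref{eq:Boc} only for $u\in\cC^{\infty}(M)$. For the $u$ of the present theorem, which is merely $H^2_{\loc}\cap\cC^1$ with $\Lap u\in H^1_0(M)$, the quantity $\Delta\!^{\Grad u}[F^2(\Grad u)/2]$ is only a distribution and $\Grad^2 u$ is not pointwise defined, so \eqref{eq:Boc} is not available on $M_u$ even almost everywhere; its weak form on $M_u$ must itself be established, e.g.\ by mollifying $u$ and passing to the limit, and this is not routine because $\Grad u$, $g_{\Grad u}$, $\Lap u$ and $\Grad^2 u$ all depend nonlinearly on $u$, and because the term $\int_M\phi\,D[\Lap u](\Grad u)\,d\fm$ involves third derivatives of the approximations (one should first rewrite it by parts as $-\int_M \Lap u\,\{D\phi(\Grad u)+\phi\Lap u\}\,d\fm$ and only then pass to the limit). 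Since the whole point of the integrated form is to cover exactly this low-regularity class (heat-flow solutions are not smooth across $\del M_{u_t}$), silently assuming \eqref{eq:Boc} for such $u$ skips an essential step.

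Two further points need repair, though they are fixable. On $M\setminus M_u$ you list $\Grad u=0$, $F^2(\Grad u)=0$ and $D[\Lap u](\Grad u)=0$, but you omit the term $\int_{M\setminus M_u}\phi\,(\Lap u)^2/N\,d\fm$, which is nonnegative for $N\in[n,\infty)$ while the corresponding left-hand contribution vanishes; one must show $\Lap u=0$ a.e.\ on $\{Du=0\}$, which follows since $\Grad u=\cL^*(Du)\in H^1_{\loc}$ (local Lipschitzness of $\cL^*$) and the weak derivatives of an $H^1_{\loc}$ vector field vanish a.e.\ on the set where the field vanishes --- the same Sobolev fact also justifies that $D[F^2(\Grad u)]=0$ a.e.\ there, which your left-hand side needs. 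Finally, your bound $|D\chi_{\ve}|\le C\ve^{-1}$ is not correct pointwise: $\chi_{\ve}$ is a function of $F(\Grad u)$, which is only $H^1_{\loc}\cap\cC^0$, so $D\chi_{\ve}$ carries a factor $D[F(\Grad u)]$ of Hessian type; the correct domination of the error integrand is by $C\|\phi\|_{L^{\infty}}\,|D[F(\Grad u)]|^2$, which is locally integrable by $u\in H^2_{\loc}$, so your dominated-convergence conclusion survives once the estimate is stated correctly --- but then, since the theorem allows $\phi\in H^1_{\loc}\cap L^{\infty}$ without compact support, you still need an exhaustion at infinity using the global hypotheses $u,\Lap u\in H^1_0(M)$ (which make the right-hand integrand globally $L^1$), a point your sketch does not address.
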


See the next subsection for the definition of $H^1_0(M)$.
We see in Theorem~\ref{th:hf}(ii) below that global solutions $(u_t)_{t \ge 0}$
to the heat equation always enjoy
the condition $u_t \in H_0^1(M) \cap H^2_{\loc}(M) \cap \cC^1(M)$,
and also $\Lap u_t \in H_0^1(M)$ when $\sS_F <\infty$.

\subsection{Heat equation}\label{ssc:hf}

In \cite{OShf,OSbw}, we have investigated the \emph{nonlinear heat equation}
$\del_t u=\Lap u$ associated with our Laplacian $\Lap$.
To recall some results in \cite{OShf},
we define the \emph{energy} of $u \in H_{\loc}^1(M)$ by
\[ \cE(u):=\frac{1}{2}\int_M F^2(\Grad u) \,d\fm
 =\frac{1}{2}\int_M F^*(Du)^2 \,d\fm. \]
We remark that $\cE(u)<\infty$ does not necessarily imply $\cE(-u)<\infty$.
Define $H^1_0(M)$ as the closure of $\cC_c^{\infty}(M)$
with respect to the (absolutely homogeneous) norm
\[ \|u\|_{H^1}:=\|u\|_{L^2} +\{ \cE(u)+\cE(-u) \}^{1/2}. \]
Note that $(H^1_0(M),\|\cdot\|_{H^1})$ is a Banach space.

\begin{definition}[Global solutions]\label{df:hf}
We say that a function $u$ on $[0,T] \times M$, $T>0$,
is a \emph{global solution} to the heat equation $\del_t u=\Lap u$ if
it satisfies the following$:$
\begin{enumerate}[(1)]
\item $u \in L^2\big( [0,T],H^1_0(M) \big) \cap H^1 \big( [0,T],H^{-1}(M) \big)$;

\item We have
\[ \int_M \phi \cdot \del_t u_t \,d\fm =-\int_M D\phi(\Grad u_t) \,d\fm \]
for all $t \in [0,T]$ and $\phi \in \cC_c^{\infty}(M)$, where we set $u_t:=u(t,\cdot)$.
\end{enumerate}
\end{definition}

We refer to \cite{Ev} for the notations as in (1).
The test function $\phi$ in (2) can be taken from $H^1_0(M)$.
Global solutions are constructed as gradient curves of the energy functional $\cE$
in the Hilbert space $L^2(M)$.
As for the regularity, since our Laplacian is locally uniformly elliptic
thanks to the strong convexity of $F$,
the classical theory of partial differential equations applies.
We summarize the existence and regularity properties in the next theorem
(see \cite[\S\S 3, 4]{OShf} for details,
we remark that our $\cC^{\infty}$-smooth $F$ and $\fm$ clearly enjoy
the mild smoothness assumption in \cite[(4.4)]{OShf}).

\begin{theorem}\label{th:hf}
\begin{enumerate}[{\rm (i)}]
\item
For each initial datum $u_0 \in H^1_0(M)$ and $T>0$,
there exists a unique global solution $u=(u_t)_{t \in [0,T]}$ to the heat equation,
and the distributional Laplacian $\Lap u_t$ is absolutely continuous
with respect to $\fm$ for all $t \in (0,T)$.

\item
One can take the continuous version of a global solution $u$, and it enjoys
the $H^2_{\loc}$-regularity in $x$ as well as the $\cC^{1,\alpha}$-regularity in both $t$ and $x$.
Moreover, $\del_t u$ lies in $H^1_{\loc}(M) \cap \cC(M)$,
and further in $H_0^1(M)$ if $\sS_F <\infty$.
\end{enumerate}
\end{theorem}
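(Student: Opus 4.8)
The plan is to build the solution variationally, as the gradient curve of the energy $\cE$ in the Hilbert space $L^2(M)$, and then to bootstrap its regularity by treating the heat equation, in local coordinates, as a uniformly parabolic quasilinear PDE. For part~(i), the strong convexity of $F$ (Definition~\ref{df:Fstr}(3)) makes $\alpha \mapsto F^*(\alpha)^2$ convex on each fibre $T^*_xM$ --- indeed $\cC^1$ everywhere and uniformly convex on compact subsets of $T^*M \setminus 0$ --- so $\cE$ is a proper, convex, lower semicontinuous functional on $L^2(M)$ whose G\^ateaux gradient at $u$ is the functional $\phi \mapsto \int_M D\phi(\Grad u)\,d\fm = -\int_M \phi\,\Lap u\,d\fm$. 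Hence the subdifferential $\partial\cE$ is maximal monotone and, on its domain, coincides with $-\Lap$ acting on those functions whose distributional Laplacian lies in $L^2(M)$. Brezis' theory of gradient flows of convex functionals then yields, for each $u_0 \in L^2(M) \supset H^1_0(M)$ and each $T > 0$, a unique locally Lipschitz curve $(u_t)_{t \in (0,T]}$ in $L^2(M)$ with $\tfrac{d}{dt}u_t = \Lap u_t$ (the minimal section of $-\partial\cE$) for a.e.\ $t$; the instantaneous regularization built into such flows puts $u_t$ in the domain of $\partial\cE$ for \emph{every} $t > 0$, which is precisely the assertion that $\Lap u_t$ is absolutely continuous with respect to $\fm$ on $(0,T)$. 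Identifying this curve with the unique global solution of Definition~\ref{df:hf} --- including the membership $u \in L^2([0,T],H^1_0(M))$ --- then follows from the energy estimates together with a Gr\"onwall argument (the $L^2$-distance of any two global solutions is non-increasing since $-\Lap$ is monotone).

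For part~(ii) I would localize to a coordinate chart, where the weak equation reads $\int_M \phi\,\partial_t u\,d\fm = -\int_M \sum_{i,j} g^*_{ij}(Du)\,(\partial_{x^j}u)(\partial_{x^i}\phi)\,d\fm$. Because the uniform smoothness and convexity constants are finite on every compact set, the coefficient matrix $(g^*_{ij}(Du))$ is bounded, measurable and uniformly elliptic on compacts --- non-degenerate, in contrast with the $p$-Laplacian for $p \neq 2$, but merely discontinuous at the zero section, where $F^2$ fails to be twice differentiable. The classical parabolic machinery then runs in sequence: a maximum-principle argument and De Giorgi iteration give local boundedness of $u$ and of $Du$; difference-quotient (Caccioppoli) estimates, exploiting the monotonicity of $\alpha \mapsto \cL^*(\alpha)$, give $u \in H^2_{\loc}$ in $x$ and $\partial_t u \in L^2_{\loc}$; differentiating the equation, each $\partial_{x^k}u$ solves a \emph{linear} uniformly parabolic equation with bounded measurable coefficients, so the De Giorgi--Nash--Moser theorem yields H\"older continuity of $Du$, i.e.\ $\cC^{1,\alpha}$-regularity jointly in $t$ and $x$. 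On $M_u = \{Du \neq 0\}$, where $g^*_{ij}(Du)$ is now H\"older (as $F^*$ is $\cC^{\infty}$ off the zero section), Schauder theory and bootstrapping give $\cC^{\infty}$-smoothness. Finally $v := \partial_t u$ solves the linear uniformly parabolic equation $\partial_t v = \Delta^{\Grad u} v$, with bounded measurable coefficients, whence $v \in \cC^{0,\alpha} \cap H^1_{\loc}(M)$ by the same linear theory.

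It remains to see that $v = \partial_t u$ lies in $H^1_0(M)$, not merely in $H^1_{\loc}(M)$, when $\sS_F < \infty$; this amounts to showing that the \emph{reverse} energies $\cE(\pm v_t)$ are \emph{globally} finite and that $v_t$ is approximable by $\cC_c^{\infty}$ functions. Testing $\partial_t v = \Delta^{\Grad u} v$ against $v$ itself gives, for any $s > 0$, the global estimate $\int_s^T \int_M g^*_{Du}(Dv,Dv)\,d\fm\,dt \le \tfrac12 \|v_s\|_{L^2(M)}^2 < \infty$, and its time-differentiated refinement gives $\int_M g^*_{Du}(Dv_t,Dv_t)\,d\fm < \infty$ for each $t > 0$. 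The finiteness of $\sS_F$ converts this $g^*_{Du}$-control into $F^*$-control: by the very definition of $\sS_F$ one has $F^*(\beta)^2 \le \sS_F\, g^*_{Du}(\beta,\beta)$ for every covector $\beta$, so both $\cE(v_t)$ and $\cE(-v_t)$ are bounded by $\tfrac{\sS_F}{2} \int_M g^*_{Du}(Dv_t,Dv_t)\,d\fm < \infty$, and a cut-off and approximation argument then places $v_t$ in $H^1_0(M)$. I expect the principal obstacle to be the $\cC^{1,\alpha}$-step of part~(ii): establishing H\"older continuity of $Du$ across the degeneracy locus $\{Du = 0\}$, where the failure of $F^2$ to be $\cC^2$ at the zero section genuinely matters --- the Finsler counterpart of the well-understood but delicate regularity theory for $p$-Laplace-type equations; the global, $\sS_F$-dependent integrability underlying $\partial_t u \in H^1_0(M)$ is the second, more technical, point requiring care.
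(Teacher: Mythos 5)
Your proposal follows essentially the same route as the paper (which defers to \cite{OShf}): construct the solution as the gradient curve of the convex energy $\cE$ in $L^2(M)$ via the theory of monotone/convex gradient flows, obtain $H^2_{\loc}$- and $\cC^{1,\alpha}$-regularity from the locally uniformly elliptic quasilinear structure by classical parabolic (De Giorgi--Nash--Moser) theory, and prove $\del_t u \in H^1_0(M)$ under $\sS_F<\infty$ by a global energy estimate converting $g^*_{Du}$-control into $F^*$-control, exactly as in \cite[Appendix~A]{OShf}. The outline is correct; if anything, you slightly overstate the difficulty at $\{Du=0\}$, since the equation there is non-degenerate and the linear theory with bounded measurable coefficients already suffices.
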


We remark that the usual elliptic regularity yields that
$u$ is $\cC^{\infty}$ on $\bigcup_{t>0}(\{t\} \times M_{u_t})$.
The proof of $\del_t u \in H_0^1(M)$ under $\sS_F <\infty$
can be found in \cite[Appendix~A]{OShf}.
It was also proved in \cite{OShf} that the heat flow is regarded as the gradient flow
of the relative entropy (see the proof of Proposition~\ref{pr:5.7.3} below)
in the \emph{$L^2$-Wasserstein space} with respect to $\rev{F}$
(this result is far beyond the scope of the present survey).

\section{Linearized heat semigroups and gradient estimates}\label{sc:grades}

In the Bochner--Weitzenb\"ock formula (Theorem~\ref{th:BW}) in the previous section,
we used the linearized Laplacian $\Delta\!^{\Grad u}$ induced from
the Riemannian structure $g_{\Grad u}$.
In the same spirit, we can consider the linearized heat semigroup
associated with a global solution to the heat equation.
This technique turns out useful, and we obtain some gradient estimates
as our first applications of the nonlinear $\Gamma$-calculus.
We will find that, from here on,
our arguments rely only on the Bochner inequality, the integration by parts,
and the (nonlinear and linearized) heat semigroups.
Calculations in local coordinates do not appear.

\subsection{Linearized heat semigroups and their adjoints}\label{ssc:lin}

Let $(u_t)_{t \ge 0}$ be a global solution to the heat equation.
We will fix a measurable one-parameter family of \emph{non-vanishing} vector fields
$(V_t)_{t \ge 0}$ such that $V_t=\Grad u_t$ on $M_{u_t}$ for each $t \ge 0$.

Given $f \in H^1_0(M)$ and $s \ge 0$,
let $(P_{s,t}^{\Grad u}(f))_{t \ge s}$ be the weak solution to the \emph{linearized heat equation}:
\begin{equation}\label{eq:lin-hf}
\del_t [P_{s,t}^{\Grad u}(f)] =\Delta\!^{V_t}[P_{s,t}^{\Grad u}(f)],
 \qquad P_{s,s}^{\Grad u}(f)=f.
\end{equation}
The existence and other properties of the linearized semigroup $P_{s,t}^{\Grad u}$
are summarized as follows (\cite{OShf,Oisop}).

\begin{proposition}[Properties of linearized semigroups]\label{pr:lin}
Assume $\sS_F<\infty$, and
let $(u_t)_{t \ge 0}$ and $(V_t)_{t \ge 0}$ be as above.
\begin{enumerate}[{\rm (i)}]
\item
For each $s \ge 0$, $T>0$ and $f \in H^1_0(M)$, there exists a unique solution
$f_t=P^{\Grad u}_{s,t}(f)$, $t \in [s,s+T]$, to \eqref{eq:lin-hf} in the weak sense that
\[ \int_s^{s+T} \int_M \del_t \phi_t \cdot f_t \,d\fm \,dt
 =\int_s^{s+T} \int_M D\phi_t (\nabla^{V_t} f_t) \,d\fm \,dt \]
for all $\phi \in \cC_c^{\infty}((s,s+T) \times M)$.

\item
The solution $(f_t)_{t \in [s,s+T]}$ is H\"older continuous on $(s,s+T) \times M$
as well as $H_{\loc}^2$ and $\cC^{1,\alpha}$ in $x$.
Moreover, we have $\del_t f_t \in H^1_0(M)$ for $t \in (s,s+T)$.
\end{enumerate}
\end{proposition}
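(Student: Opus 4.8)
\emph{Proof strategy.}
The statement is the linear-parabolic counterpart of Theorem~\ref{th:hf}, so the plan is to read \eqref{eq:lin-hf} as a linear, uniformly parabolic Cauchy problem with given (measurable-in-time) coefficients and to apply the classical theory, keeping track only of where the data $t\mapsto V_t$ is genuinely rough. First I would record the structural facts: on $M_{u_t}$ one has $V_t=\Grad u_t$, so in local coordinates the coefficient matrix of $\Delta^{V_t}$ is $(g^{ij}(V_t(x)))=(g^*_{ij}(Du_t(x)))$, which is $\cC^{0,\alpha}$ in $x$ by the $\cC^{1,\alpha}$-regularity in Theorem~\ref{th:hf}(ii); on $M\setminus M_{u_t}$ the matrix is only bounded measurable, and it is measurable in $t$ throughout. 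The hypothesis $\sS_F<\infty$ serves to make the associated time-dependent Dirichlet forms
\[ \cE^V_t(f,h):=\int_M g_{V_t}(\nabla^{V_t}f,\nabla^{V_t}h)\,d\fm=\int_M Dh(\nabla^{V_t}f)\,d\fm,\qquad f,h\in H^1_0(M), \]
uniformly in $t$, bounded and coercive with respect to the fixed Banach norm $\|\cdot\|_{H^1}$ of $H^1_0(M)$ (cf.\ \cite{OShf,Oisop}); equivalently, \eqref{eq:lin-hf} is uniformly parabolic with ellipticity constants depending only on $\sS_F$.

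For (i) I would then invoke the variational (Lions) theory of abstract parabolic evolution equations on the Gelfand triple $H^1_0(M)\hookrightarrow L^2(M)\hookrightarrow H^{-1}(M)$ determined by the family $(\cE^V_t)_{t\ge s}$: for $f\in H^1_0(M)$ this yields a unique $(f_t)_{t\in[s,s+T]}\in L^2([s,s+T],H^1_0(M))\cap H^1([s,s+T],H^{-1}(M))$ with $f_s=f$ solving \eqref{eq:lin-hf} in the abstract sense, and an integration by parts in $t$ turns this into the weak formulation stated in (i) (the test functions may then be taken from $H^1_0(M)$). Uniqueness is immediate from linearity: a solution with $f_s=0$ satisfies $\tfrac{d}{dt}\|f_t\|_{L^2}^2=-2\cE^V_t(f_t,f_t)\le 0$, hence $f_t\equiv 0$. (Strictly speaking the coefficients on $M\setminus M_{u_t}$, and hence $f_t$ there, depend on the arbitrary measurable choice of $V_t$; this does not affect the estimates used later.)

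For (ii) the regularity is obtained exactly as in the proof of Theorem~\ref{th:hf}(ii), now for the genuinely \emph{linear} equation \eqref{eq:lin-hf}. De Giorgi--Nash--Moser theory for uniformly parabolic equations with bounded measurable coefficients provides a version of $f$ that is locally H\"older continuous on $(s,s+T)\times M$; on the open set $\bigcup_t(\{t\}\times M_{u_t})$, where the coefficients are $\cC^{0,\alpha}$ in $x$, interior Schauder and $L^p$ estimates upgrade this to $H^2_{\loc}$- and $\cC^{1,\alpha}$-regularity in $x$ and to H\"older continuity in $(t,x)$. Finally, $\del_t f_t\in H^1_0(M)$ for $t\in(s,s+T)$ is the linear analogue of the corresponding assertion in Theorem~\ref{th:hf}(ii), and I would prove it by the argument of \cite[Appendix~A]{OShf}: interior parabolic regularity already gives $w:=\del_t f\in H^1_{\loc}(M)\cap\cC(M)$ with $\Delta^{V_t}f_t=w$, and an energy estimate for $w$ (testing the differentiated equation against $w$, with $F(\nabla w)\in L^2$ globally — this is where $\sS_F<\infty$ re-enters) yields $w_t\in H^1_0(M)$.

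The main obstacle is the low regularity of the coefficients $(g^{ij}(V_t(x)))$ across the set $\{Du_t=0\}=M\setminus M_{u_t}$, where $V_t$ is merely a measurable non-vanishing extension: there the coefficients are only bounded measurable, and only measurable in $t$. This rules out a uniform Schauder treatment and is what dictates the two-tier structure above — variational/Lions theory together with De Giorgi--Nash--Moser for everything asserted globally, and interior Schauder only on $M_{u_t}$ — and it is also why the assumption $\sS_F<\infty$ is indispensable, both for the coercivity of the forms $\cE^V_t$ and for promoting the local bound on $F(\nabla\del_t f_t)$ to the global integrability behind $\del_t f_t\in H^1_0(M)$.
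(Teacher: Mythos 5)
Your overall architecture is essentially the paper's own: the survey defers the construction to \cite{OShf,Oisop} and records only the energy identity $\del_t(\|f_t\|_{L^2}^2)=-4\cE^{V_t}(f_t)$ for uniqueness and $L^2$-contraction, plus the smoothness of $f$ on $\bigcup_t(\{t\}\times M_{u_t})$, and you reconstruct exactly this scheme: read \eqref{eq:lin-hf} as a linear divergence-form parabolic equation whose coefficients $g^{ij}(V_t)$ are measurable in $t$, H\"older in $x$ on $M_{u_t}$ and merely bounded measurable elsewhere; existence variationally, uniqueness from the energy identity, global H\"older continuity from De Giorgi--Nash--Moser, the $H^2_{\loc}$/$\cC^{1,\alpha}$ upgrade from interior Schauder/$L^p$ theory where the coefficients are regular, and $\del_t f_t\in H^1_0(M)$ by the argument of \cite[Appendix~A]{OShf}. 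Your restriction of the higher regularity to $\bigcup_t(\{t\}\times M_{u_t})$ is also the right reading of (ii): off $M_{u_t}$ the coefficients depend on the arbitrary measurable extension $V_t$ (for constant $u_0$ the equation has genuinely arbitrary measurable elliptic coefficients), so nothing beyond H\"older continuity can be expected there, in accordance with the remark after the proposition that $f$ is $\cC^{\infty}$ on that open set.

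The one step whose justification does not work as written is the Lions/Gelfand-triple step: $\sS_F<\infty$ does \emph{not} make the forms $\cE^{V_t}$ uniformly bounded on $(H^1_0(M),\|\cdot\|_{H^1})$, nor does it make \eqref{eq:lin-hf} uniformly parabolic ``with constants depending only on $\sS_F$''. By the dual characterizations in Subsection~\ref{ssc:Fmfd}, $F^*(\beta)^2\le \sS_F\, g^*_{\alpha}(\beta,\beta)$ is precisely the \emph{lower} (coercivity) bound for the coefficient matrices, whereas the \emph{upper} bound $g^*_{\alpha}(\beta,\beta)\le \sC_F\, F^*(\beta)^2$ is governed by the uniform convexity constant $\sC_F$, which is not assumed finite here; when $\sC_F=\infty$ the form $f\mapsto\int_M Df(\nabla^{V_t}f)\,d\fm$ can even be infinite for some $f\in H^1_0(M)$, so Lions' theorem with the fixed space $V=H^1_0(M)$ does not apply directly. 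This is repairable by standard means (and is the kind of fine print the cited construction handles): two-sided ellipticity does hold locally, on compact sets, by the smoothness and strong convexity of $F$, so one solves Dirichlet problems on an exhaustion and passes to the limit using the a priori bounds on $\sup_t\|f_t\|_{L^2}$ and $\int_s^{s+T}\cE^{V_t}(f_t)\,dt$; the hypothesis $\sS_F<\infty$ then enters where it genuinely belongs, namely through the coercivity $\cE(f)\le\sS_F\,\cE^{V_t}(f)$, which places the solution in $H^1_0(M)$, fixes the uniqueness class, and provides the global estimate behind $\del_t f_t\in H^1_0(M)$. With this correction your proof is in line with the paper's approach.
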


The uniqueness in (i) follows from
$\del_t (\|f_t\|_{L^2}^2) =-4\cE^{V_t}(f_t) \le 0$,
where $\cE^{V_t}$ is the energy form with respect to $g_{V_t}$.
This also implies that
$P^{\Grad u}_{s,t}$ uniquely extends to the linear contraction semigroup acting on $L^2(M)$.
Notice also that
\[ f \in \cC^{\infty} \bigg( \bigcup_{s<t<s+T} (\{t\} \times M_{u_t}) \bigg). \]

The operator $P_{s,t}^{\Grad u}$ is linear but \emph{asymmetric}
with respect to the $L^2$-inner product.
Let us denote by $\widehat{P}^{\Grad u}_{s,t}$ the \emph{adjoint operator}
of $P_{s,t}^{\Grad u}$.
That is to say, given $h \in H_0^1(M)$ and $t>0$,
we define $(\widehat{P}^{\Grad u}_{s,t}(h))_{s \in [0,t]}$ as the solution
to the equation
\begin{equation}\label{eq:Phat}
\del_s [\widehat{P}^{\Grad u}_{s,t}(h)]
 =-\Delta\!^{V_s} [\widehat{P}^{\Grad u}_{s,t}(h)],
 \qquad \widehat{P}^{\Grad u}_{t,t}(h)=h.
\end{equation}
Note that
\begin{equation}\label{eq:adj}
\int_M h \cdot P^{\Grad u}_{s,t}(f) \,d\fm
 =\int_M \widehat{P}^{\Grad u}_{s,t}(h) \cdot f \,d\fm
\end{equation}
indeed holds since for $r \in (0,t-s)$
\begin{align*}
&\del_r \bigg[ \int_M \widehat{P}^{\Grad u}_{s+r,t}(h) \cdot
 P_{s,s+r}^{\Grad u}(f) \,d\fm \bigg] \\
&= -\int_M \Delta\!^{V_{s+r}}[\widehat{P}^{\Grad u}_{s+r,t}(h)] \cdot
 P_{s,s+r}^{\Grad u}(f) \,d\fm
 +\int_M \widehat{P}^{\Grad u}_{s+r,t}(h) \cdot
 \Delta\!^{V_{s+r}}[P_{s,s+r}^{\Grad u}(f)] \,d\fm \\
&=0.
\end{align*}
One may rewrite \eqref{eq:Phat} as
\[ \del_{\sigma} [\widehat{P}^{\Grad u}_{t-\sigma,t}(h)]
 =\Delta\!^{V_{t-\sigma}} [\widehat{P}^{\Grad u}_{t-\sigma,t}(h)],
 \qquad \sigma \in [0,t], \]
to see that the adjoint heat semigroup solves the linearized heat equation \emph{backward in time}.
(This evolution is sometimes called the \emph{conjugate heat semigroup},
especially in the Ricci flow theory, see for instance \cite[Chapter~5]{Ch+}.)
Therefore we find in the same way as $P^{\Grad u}_{s,t}$ that
$\widehat{P}^{\Grad u}_{t-\sigma,t}$ extends to the linear contraction semigroup acting on $L^2(M)$.

\begin{remark}\label{rm:V_t}
In general, the semigroups $P^{\Grad u}_{s,t}$ and $\widehat{P}^{\Grad u}_{s,t}$
may depend on the choice of an auxiliary vector field $(V_t)_{t \ge 0}$.
This issue, however, will not affect our discussions.
\end{remark}

\subsection{Gradient estimates}\label{ssc:grades}

We deduce from the Bochner inequality \eqref{eq:Boc} with $N=\infty$
the \emph{$L^2$-gradient estimate} for heat flow.
The proof is a good example of a typical argument in the $\Gamma$-calculus.

\begin{theorem}[$L^2$-gradient estimate]\label{th:L2}
Assume $\Ric_{\infty} \ge K$ and $\sS_F<\infty$.
Then, given any global solution $(u_t)_{t \ge 0}$ to the heat equation
with $u_0 \in \cC^{\infty}_c(M)$, we have
\[ F^2\big( \Grad u_t(x) \big)
 \le \e^{-2K(t-s)} P_{s,t}^{\Grad u} \big( F^2(\Grad u_s) \big) (x) \]
for all $0 \le s<t<\infty$ and $x \in M$.
\end{theorem}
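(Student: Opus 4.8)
The plan is to run the classical Bakry--\'Emery $\Gamma$-calculus interpolation, with the linearized semigroup $P^{\Grad u}_{\cdot,t}$ of Subsection~\ref{ssc:lin} playing the role that the linear heat semigroup plays in the Riemannian theory. Fix $0 \le s < t$ and, for $r \in [s,t]$, set
\[ \Psi(r) := \e^{2K(r-s)}\, P^{\Grad u}_{r,t}\bigl( F^2(\Grad u_r) \bigr). \]
Since $P^{\Grad u}_{t,t} = \id$, we have $\Psi(t) = \e^{2K(t-s)} F^2(\Grad u_t)$ and $\Psi(s) = P^{\Grad u}_{s,t}(F^2(\Grad u_s))$, so the assertion is precisely $\Psi(t) \le \Psi(s)$; hence it suffices to show that $\Psi$ is non-increasing on $[s,t]$.

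To differentiate $\Psi$ I would use two facts. First, from the semigroup identity $P^{\Grad u}_{r,t} = P^{\Grad u}_{\rho,t} \circ P^{\Grad u}_{r,\rho}$ and \eqref{eq:lin-hf}, the derivative of $P^{\Grad u}_{r,t}(h)$ in the initial time $r$ (for fixed $h$) equals $-P^{\Grad u}_{r,t}(\Delta\!^{V_r} h)$, so that for an $r$-dependent family $h_r$,
\[ \frac{d}{dr}\Bigl[ P^{\Grad u}_{r,t}(h_r) \Bigr] = P^{\Grad u}_{r,t}\bigl( \del_r h_r - \Delta\!^{V_r} h_r \bigr). \]
Second, differentiating $\tfrac{1}{2}F^*(Du_r)^2$ in $r$ and using the heat equation $\del_r u_r = \Lap u_r$ together with the definition of the Legendre transform $\cL^*$ (so that $\Grad u_r = \cL^*(Du_r)$) yields $\del_r\bigl[ \tfrac{1}{2}F^2(\Grad u_r) \bigr] = D[\Lap u_r](\Grad u_r)$ pointwise on $M_{u_r}$. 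Since $V_r = \Grad u_r$ on $M_{u_r}$, combining these with the Bochner inequality \eqref{eq:Boc} for $N = \infty$ (available because $\Ric_{\infty} \ge K$) gives, on $M_{u_r}$,
\[ \del_r[F^2(\Grad u_r)] - \Delta\!^{V_r}[F^2(\Grad u_r)]
 = -2\bigl( \Delta\!^{\Grad u_r}[\tfrac{1}{2}F^2(\Grad u_r)] - D[\Lap u_r](\Grad u_r) \bigr)
 \le -2K F^2(\Grad u_r). \]
Feeding this into the first fact, and using that $P^{\Grad u}_{r,t}$ --- being the heat semigroup of the weighted Riemannian manifold $(M, g_{V_r}, \fm)$ --- preserves positivity, one gets
\[ \Psi'(r) = \e^{2K(r-s)} P^{\Grad u}_{r,t}\bigl( 2K F^2(\Grad u_r) + \del_r[F^2(\Grad u_r)] - \Delta\!^{V_r}[F^2(\Grad u_r)] \bigr) \le 0, \]
which is the desired monotonicity.

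The main obstacle is turning this formal calculation into a rigorous one. The Bochner inequality is available only pointwise on $M_{u_r}$, and $F^2(\Grad u_r)$ is merely $\cC^1$ (not $\cC^2$) across the vanishing set $M \setminus M_{u_r}$, where $\Delta\!^{V_r}[F^2(\Grad u_r)]$ need not vanish, so the differential inequality cannot simply be asserted on all of $M$. Following \cite{OSbw,Oisop}, I would instead fix an arbitrary nonnegative $\phi \in H^1_{\loc}(M) \cap L^{\infty}(M)$ and work with $r \mapsto \int_M \widehat{P}^{\Grad u}_{r,t}(\phi)\, F^2(\Grad u_r)\,d\fm$, using the adjoint relation \eqref{eq:adj}, the backward equation \eqref{eq:Phat} for the conjugate semigroup $\widehat{P}^{\Grad u}$, the integration-by-parts identity \eqref{eq:f1f2}, and, crucially, the \emph{integrated} Bochner inequality of Theorem~\ref{th:BWint} applied to the nonnegative test function $\widehat{P}^{\Grad u}_{r,t}(\phi)$; it is this integrated form that absorbs the vanishing set into the weak formulation. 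This should yield
\[ \frac{d}{dr}\Bigl[ \e^{2K(r-s)} \int_M \widehat{P}^{\Grad u}_{r,t}(\phi)\, F^2(\Grad u_r)\,d\fm \Bigr] \le 0, \]
whence $\e^{2K(t-s)} F^2(\Grad u_t) \le P^{\Grad u}_{s,t}(F^2(\Grad u_s))$ holds $\fm$-almost everywhere once $\phi$ ranges over all such test functions, and then everywhere by the H\"older continuity furnished by Theorem~\ref{th:hf} and Proposition~\ref{pr:lin}. The hypotheses $u_0 \in \cC^{\infty}_c(M)$ and $\sS_F < \infty$ are exactly what guarantee the regularity of $(u_r)$, with $\Lap u_r \in H^1_0(M)$, required by Theorem~\ref{th:BWint}, as well as the existence and basic properties of $P^{\Grad u}_{r,t}$ and $\widehat{P}^{\Grad u}_{r,t}$ from Proposition~\ref{pr:lin}; by Remark~\ref{rm:V_t} the choice of the auxiliary family $(V_r)$ does not affect the conclusion.
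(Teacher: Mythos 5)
Your proposal is correct and follows essentially the same route as the paper: the rigorous version you describe, namely showing that $s \mapsto \e^{2Ks}\int_M \widehat{P}^{\Grad u}_{s,t}(\phi)\,F^2(\Grad u_s)\,d\fm$ is non-increasing via the backward equation \eqref{eq:Phat}, the identity \eqref{eq:f1f2}, and the integrated Bochner inequality \eqref{eq:BWint} applied to the nonnegative test function $\widehat{P}^{\Grad u}_{s,t}(\phi)$, and then passing through the adjoint relation \eqref{eq:adj} and H\"older continuity, is exactly the paper's argument. Your opening formal computation with $\Psi(r)=\e^{2K(r-s)}P^{\Grad u}_{r,t}(F^2(\Grad u_r))$ is only a heuristic reformulation of the same scheme, and you correctly identify why it must be replaced by the weak, integrated version.
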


Let us stress that we used the nonlinear semigroup ($u_s \to u_t$) in the LHS,
while in the RHS the linearized semigroup $P^{\Grad u}_{s,t}$ shows up.

\begin{outline}
We remark that the condition $u_0 \in \cC^{\infty}_c(M)$ ensures
$F^2(\Grad u_0) \in \cC^1_c(M)$,
and hence both sides in Theorem~\ref{th:L2} are H\"older continuous.

For fixed $t>0$ and an arbitrary nonnegative function $h \in \cC(M)$ of compact support,
we set
\[ H(s):=\e^{2Ks} \int_M \widehat{P}_{s,t}^{\Grad u}(h) \cdot \frac{F^2(\Grad u_s)}{2} \,d\fm,
 \qquad 0 \le s \le t. \]
Then we deduce from the definition of $\widehat{P}_{s,t}^{\Grad u}$ that
\begin{align*}
H'(s) &=\e^{2Ks} \int_M D\bigg( \frac{F^2(\Grad u_s)}{2} \bigg)
 \Big( \nabla^{\Grad u_s} \big( \widehat{P}_{s,t}^{\Grad u}(h) \big) \Big) \,d\fm \\
&\quad +\e^{2Ks} \int_M \widehat{P}_{s,t}^{\Grad u}(h) \cdot
 \frac{\del}{\del s} \bigg( \frac{F^2(\Grad u_s)}{2} \bigg) \,d\fm
 +2KH(s).
\end{align*}
By \eqref{eq:f1f2}, the first term in the RHS coincides with
\[ \e^{2Ks} \int_M D \big( \widehat{P}_{s,t}^{\Grad u}(h) \big)
 \bigg( \nabla^{\Grad u_s} \bigg( \frac{F^2(\Grad u_s)}{2} \bigg) \bigg) \,d\fm. \]
The other terms are calculated with the help of Theorem~\ref{th:Euler} as
(see \cite{OSbw} for details)
\begin{align*}
&\e^{2Ks} \int_M \widehat{P}_{s,t}^{\Grad u}(h) \bigg\{
 D\bigg( \frac{\del u_s}{\del s} \bigg) (\Grad u_s) +KF^2(\Grad u_s) \bigg\} \,d\fm \\
&= \e^{2Ks} \int_M \widehat{P}_{s,t}^{\Grad u}(h) \big\{ D(\Lap u_s) (\Grad u_s)
 +KF^2(\Grad u_s) \big\} \,d\fm.
\end{align*}
Therefore we have $H'(s) \le 0$ by the Bochner inequality \eqref{eq:BWint}.

Thus we find $H(t) \le H(s)$ for any $0 \le s \le t$, which yields via \eqref{eq:adj}
\begin{align*}
\e^{2Kt} \int_M h \, \frac{F^2(\Grad u_t)}{2} \,d\fm
&\le \e^{2Ks} \int_M \widehat{P}_{s,t}^{\Grad u}(h) \cdot \frac{F^2(\Grad u_s)}{2} \,d\fm \\
&= \e^{2Ks} \int_M h \cdot P_{s,t}^{\Grad u} \bigg( \frac{F^2(\Grad u_s)}{2} \bigg) \,d\fm.
\end{align*}
Since this holds true for any nonnegative $h$, we obtain the claimed inequality
for almost all $x \in M$.
This completes the proof since both sides are H\"older continuous.
$\qedd$
\end{outline}

In the linear setting, it is known that the Bochner inequality
enjoys a self-improving property (\cite[Theorem~6]{BQ}, \cite{BGL}).
In our Finsler setting, however, the self-improvement works only when $F$ is reversible
(as far as the author knows).
Nonetheless, we can show the improved inequality by the direct calculation (\cite{Oisop}).

\begin{proposition}[Improved Bochner inequality]\label{pr:iBoch}
Assume $\Ric_{\infty} \ge K$ for some $K \in \R$.
Given $u \in H_0^1(M) \cap H^2_{\loc}(M) \cap \cC^1(M)$
such that $\Lap u \in H_0^1(M)$,
we have
\begin{align*}
&-\int_M D\phi \bigg( \nabla^{\Grad u} \bigg[ \frac{F^2(\Grad u)}{2} \bigg] \bigg) \,d\fm \\
&\ge \int_M \phi \Big\{ D[\Lap u](\Grad u) +K F^2(\Grad u)
 +D[F(\Grad u)]\big( \nabla^{\Grad u}[F(\Grad u)] \big) \Big\} \,d\fm
\end{align*}
for all nonnegative functions $\phi \in H_{\loc}^1(M) \cap L^{\infty}(M)$.
\end{proposition}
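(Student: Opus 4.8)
The plan is to refine the $N=\infty$ case of the Bochner inequality~\eqref{eq:Boc} by \emph{keeping}, instead of discarding, the Hilbert--Schmidt term of the Bochner--Weitzenb\"ock identity~\eqref{eq:BW} and estimating it from below by a Finsler analogue of the Kato inequality $|\Hess u|^2\ge|\nabla|\nabla u||^2$; since, as noted just above, the self-improving mechanism of the linear $\Gamma$-calculus is unavailable for non-reversible $F$, this lower bound has to be obtained directly. Concretely, I would first prove the point-wise inequality
\[ \| \Grad^2 u \|_{\HS(\Grad u)}^2 \ \ge\ D\big[ F(\Grad u) \big]\big( \nabla^{\Grad u}[F(\Grad u)] \big)
 \qquad \text{on } M_u. \]
By~\eqref{eq:f1f2} the right-hand side equals $g_{\Grad u}\big( \nabla^{\Grad u}[F(\Grad u)],\nabla^{\Grad u}[F(\Grad u)] \big)$, so this is a Cauchy--Schwarz estimate: using the identity $\nabla^{\Grad u}[F^2(\Grad u)/2]=\Grad^2 u(\Grad u)$ on $M_u$ (which already underlies~\eqref{eq:BW}; see \cite{OSbw}) together with the chain rule for $F(\Grad u)=\sqrt{F^2(\Grad u)}$, one gets $\nabla^{\Grad u}[F(\Grad u)]=\Grad^2 u(\Grad u)/F(\Grad u)$, and since $\Grad u/F(\Grad u)$ is a $g_{\Grad u}$-unit vector and $\Grad^2 u$ is $g_{\Grad u}$-symmetric, the claim reduces to $|\Grad^2 u(e)|_{g_{\Grad u}}^2\le\|\Grad^2 u\|_{\HS(\Grad u)}^2$ for a unit vector $e$. (This step may alternatively be carried out by a direct coordinate computation, as in \cite{Oisop}.)

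Granting this, for $u\in\cC^{\infty}(M)$ one substitutes the bound into~\eqref{eq:BW}, uses $\Ric_{\infty}(\Grad u)\ge KF^2(\Grad u)$, and obtains the asserted estimate in point-wise form on $M_u$; it then remains to integrate it against a nonnegative $\phi\in H_{\loc}^1(M)\cap L^{\infty}(M)$. I would do this exactly as in the derivation of the integrated Bochner inequality (Theorem~\ref{th:BWint}) from~\eqref{eq:BW} in \cite{OSbw}: integrate by parts on the open set $M_u$ and handle the passage through $\{Du=0\}$ by inserting cutoffs supported away from that set and letting them expand, observing that $F(\Grad u)$, $\nabla^{\Grad u}[F^2(\Grad u)/2]=\Grad^2 u(\Grad u)$ and $D[\Lap u](\Grad u)$ all vanish on $M\setminus M_u$, so that no singular contribution on $\partial M_u$ is created; on $M\setminus M_u$ one sets $\nabla^{\Grad u}[F(\Grad u)]:=0$, consistent with $F(\Grad u)\equiv0$ there. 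Finally, the extension from $\cC^{\infty}$ functions to $u\in H_0^1(M)\cap H_{\loc}^2(M)\cap\cC^1(M)$ with $\Lap u\in H_0^1(M)$ is obtained by the same approximation argument already used for Theorem~\ref{th:BWint}.

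The main obstacle is this last, analytic part: one must make rigorous sense of $\nabla^{\Grad u}[F(\Grad u)]$ as an $L^2_{\loc}$-vector field across the set $\{Du=0\}$ --- where $\Grad u$ is only continuous and $u$ only $H_{\loc}^2$, so $\Grad^2 u$ need not even be continuous --- and verify that $D[F(\Grad u)]\big(\nabla^{\Grad u}[F(\Grad u)]\big)\in L^1_{\loc}(M)$, so that the right-hand side of the claimed inequality is meaningful. On $M_u$ this follows from $\nabla^{\Grad u}[F(\Grad u)]=\nabla^{\Grad u}[F^2(\Grad u)]/(2F(\Grad u))$ together with the $L^2_{\loc}$-integrability of $\|\Grad^2 u\|_{\HS(\Grad u)}$ supplied by Theorem~\ref{th:BWint}, while $\{Du=0\}$ contributes nothing because a Sobolev function has vanishing gradient almost everywhere on each of its level sets, applied here to $F(\Grad u)$. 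Assembling these ingredients within the framework of \cite{OSbw,Oisop} completes the argument.
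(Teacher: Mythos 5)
Your proposal is correct and follows essentially the same route as the paper's source for this statement (the ``direct calculation'' of \cite{Oisop}): one bounds the Hilbert--Schmidt term in \eqref{eq:BW} from below by $D[F(\Grad u)]\big(\nabla^{\Grad u}[F(\Grad u)]\big)$ via the identity $\nabla^{\Grad u}[F^2(\Grad u)/2]=\Grad^2 u(\Grad u)$, the symmetry of $\Grad^2 u$ with respect to $g_{\Grad u}$ and Cauchy--Schwarz applied to the unit vector $\Grad u/F(\Grad u)$, and then passes to the integrated form by the same approximation scheme as Theorem~\ref{th:BWint}. Your treatment of the set $\{Du=0\}$ (vanishing of the gradient of $F(\Grad u)$ almost everywhere on that level set, and setting $\nabla^{\Grad u}[F(\Grad u)]:=0$ there) is consistent with how the cited proof handles it, so no gap remains.
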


By using this improved inequality instead of \eqref{eq:BWint},
we have the \emph{$L^1$-gradient estimate}
(which implies the $L^2$-gradient estimate via Jensen's inequality, see \cite{Oisop}).

\begin{theorem}[$L^1$-gradient estimate]\label{th:L1}
Assume $\Ric_{\infty} \ge K$, $\sS_F<\infty$ and the completeness of $(M,F)$.
Then, given any global solution $(u_t)_{t \ge 0}$ to the heat equation
with $u_0 \in \cC^{\infty}_c(M)$, we have
\[ F\big( \Grad u_t (x) \big)
 \le \e^{-K(t-s)} P_{s,t}^{\Grad u} \big( F(\Grad u_s) \big)(x) \]
for all $0 \le s<t<\infty$ and $x \in M$.
\end{theorem}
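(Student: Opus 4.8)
The plan is to mimic the proof of the $L^2$-gradient estimate (Theorem~\ref{th:L2}), but starting from the improved Bochner inequality (Proposition~\ref{pr:iBoch}) so that the quadratic $(\Lap u)^2/N$ term is replaced by $D[F(\Grad u)](\nabla^{\Grad u}[F(\Grad u)])$, which is exactly what makes the square root of the flow Lipschitz-controlled. Fix $t>0$ and a nonnegative $h \in \cC(M)$ of compact support, and consider
\[ H(s):=\e^{Ks} \int_M \widehat{P}_{s,t}^{\Grad u}(h) \cdot \sqrt{\delta + F^2(\Grad u_s)} \,d\fm,
 \qquad 0 \le s \le t, \]
with a regularization parameter $\delta>0$ inserted so that the integrand is differentiable even where $\Grad u_s$ vanishes; at the end I let $\delta \downarrow 0$. (Here one uses the regularity in Theorem~\ref{th:hf}(ii) together with $\del_t u_t \in H_0^1(M)$ under $\sS_F<\infty$, and the completeness hypothesis to justify the integrations by parts on the noncompact manifold, exactly as in \cite{Oisop}.) First I would differentiate $H$ in $s$: one term comes from the adjoint semigroup hitting $\sqrt{\delta+F^2(\Grad u_s)}$, which by \eqref{eq:f1f2} can be moved onto $\widehat{P}_{s,t}^{\Grad u}(h)$; the other term comes from $\del_s$ of the integrand, which via Euler's theorem (Theorem~\ref{th:Euler}) and $\del_s u_s = \Lap u_s$ produces $D[\Lap u_s](\Grad u_s)$ up to the factor $1/\sqrt{\delta+F^2(\Grad u_s)}$; and there is the $KH(s)$ term from the exponential.

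The key computation is then to show $H'(s) \le 0$. After collecting terms, $H'(s)$ is an integral against the nonnegative weight $\phi := \widehat{P}_{s,t}^{\Grad u}(h)/\sqrt{\delta+F^2(\Grad u_s)}$ (nonnegative because $\widehat{P}^{\Grad u}$ preserves positivity, being a contraction semigroup of a linear diffusion) of the quantity
\[ -D\phi\Big(\nabla^{\Grad u_s}\big[\tfrac{F^2(\Grad u_s)}{2}\big]\Big) - \phi\big\{ D[\Lap u_s](\Grad u_s) + K F^2(\Grad u_s)\big\} \]
\emph{plus} the contribution of the $\delta$-regularized derivative of the square root, namely the chain-rule term $-\tfrac{1}{4}(\delta+F^2)^{-3/2}\,\widehat{P}_{s,t}^{\Grad u}(h)\,g_{\Grad u_s}(\nabla^{\Grad u_s}F^2,\nabla^{\Grad u_s}F^2)$, which is exactly $-\phi\,D[F(\Grad u_s)](\nabla^{\Grad u_s}[F(\Grad u_s)])$ up to the $\delta$-correction. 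Proposition~\ref{pr:iBoch} applied with this $\phi$ says precisely that this whole expression is $\le 0$ (modulo the error terms one controls as $\delta\downarrow0$). Hence $H(t)\le H(s)$, i.e.
\[ \e^{Kt}\int_M h\,\sqrt{\delta+F^2(\Grad u_t)}\,d\fm \le \e^{Ks}\int_M h\cdot P_{s,t}^{\Grad u}\big(\sqrt{\delta+F^2(\Grad u_s)}\big)\,d\fm, \]
using the adjointness \eqref{eq:adj}. Letting $\delta\downarrow0$, using dominated convergence and the fact that $P_{s,t}^{\Grad u}$ is an $L^2$-contraction, and then letting $h$ range over all nonnegative continuous compactly supported functions yields $\e^{Kt}F(\Grad u_t)\le \e^{Ks}P_{s,t}^{\Grad u}(F(\Grad u_s))$ almost everywhere, hence everywhere by the Hölder continuity of both sides (Proposition~\ref{pr:lin}(ii)).

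The main obstacle is the nonsmoothness of $F(\Grad u_s)$ across $M\setminus M_{u_s}$ combined with the noncompactness of $M$: one must verify that inserting the $\delta$-regularization genuinely makes $H$ differentiable and that all the error terms in $H'(s)$ coming from $\del_s\sqrt{\delta+F^2}$ and from the cut-off needed to integrate by parts on a noncompact $M$ vanish (or have the right sign) in the limit $\delta\downarrow0$. This is where $\sS_F<\infty$ (ensuring $\del_t u_t\in H_0^1(M)$ and a uniform ellipticity constant for $\Delta^{V_t}$) and completeness are used in an essential way; the estimate is otherwise identical in spirit to the Riemannian $\Gamma$-calculus argument of Bakry--Ledoux. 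A secondary point is that $\phi$ above lies in $H^1_{\loc}(M)\cap L^\infty(M)$ only after the $\delta$-regularization (without it, $1/F(\Grad u_s)$ blows up on $M\setminus M_{u_s}$), so the choice of test function in Proposition~\ref{pr:iBoch} must be made at the regularized level and the limit taken afterward.
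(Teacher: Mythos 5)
Your proposal is correct and follows essentially the same route as the paper, which deduces the $L^1$-estimate by running the semigroup argument of Theorem~\ref{th:L2} with the improved Bochner inequality (Proposition~\ref{pr:iBoch}) in place of \eqref{eq:BWint}, exactly as detailed in \cite{Oisop}; your $\delta$-regularization of $F(\Grad u_s)$, the choice of test function $\widehat{P}^{\Grad u}_{s,t}(h)/\sqrt{\delta+F^2(\Grad u_s)}$, and the passage from the a.e.\ inequality to the pointwise one via H\"older continuity are precisely the devices used there. The only cosmetic remark is that the sign-critical $\delta$-error is the mismatch $K\delta\int \widehat{P}^{\Grad u}_{s,t}(h)(\delta+F^2(\Grad u_s))^{-1/2}\,d\fm$ in the curvature term (the chain-rule correction you single out actually comes with a favorable sign), but this is of order $\sqrt{\delta}$ and disappears in the limit, as you assert.
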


Recall from Lemma~\ref{lm:rev} that
$\sS_F<\infty$ implies the finite reversibility $\Lambda_F<\infty$.
Hence $F$ and $\rev{F}$ are comparable
and the forward and backward completenesses are mutually equivalent.
Thereby we called it the plain \emph{completeness} in the theorem above.

\subsection{Characterizations of lower Ricci curvature bounds}\label{ssc:char}

We saw in the previous subsection that
the Bochner inequalities imply the gradient estimates.
The converse also holds true and, furthermore,
they characterize the lower Ricci curvature bound $\Ric_{\infty} \ge K$ as follows (\cite{Oisop}).

\begin{theorem}[Characterizations of Ricci curvature bounds]\label{th:char}
Suppose that $\sS_F<\infty$ and $(M,F)$ is complete.
Then, for each $K \in \R$, the following are equivalent$:$
\begin{enumerate}[{\rm (I)}]
\item $\Ric_{\infty} \ge K$.

\item The Bochner inequality
\[ \Delta\!^{\Grad u} \bigg[ \frac{F^2(\Grad u)}{2} \bigg] -D[\Lap u](\Grad u)
 \ge KF^2(\Grad u) \]
holds on $M_u$ for all $u \in \cC^{\infty}(M)$.

\item The improved Bochner inequality
\[ \Delta\!^{\Grad u} \bigg[ \frac{F^2(\Grad u)}{2} \bigg] -D[\Lap u](\Grad u) -KF^2(\Grad u)
 \ge D[F(\Grad u)] \big( \nabla^{\Grad u} [F(\Grad u)] \big) \]
holds on $M_u$ for all $u \in \cC^{\infty}(M)$.

\item The $L^2$-gradient estimate
\[ F^2(\Grad u_t)
 \le \e^{-2K(t-s)} P_{s,t}^{\Grad u} \big( F^2(\Grad u_s) \big), \qquad 0 \le s<t<\infty, \]
holds for all global solutions $(u_t)_{t \ge 0}$ to the heat equation
with $u_0 \in \cC^{\infty}_c(M)$.

\item The $L^1$-gradient estimate
\[ F(\Grad u_t)
 \le \e^{-K(t-s)} P_{s,t}^{\Grad u} \big( F(\Grad u_s) \big), \qquad 0 \le s<t<\infty, \]
holds for all global solutions $(u_t)_{t \ge 0}$ to the heat equation
with $u_0 \in \cC^{\infty}_c(M)$.
\end{enumerate}
\end{theorem}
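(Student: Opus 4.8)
The plan is to establish the cycle of implications $(\mathrm{I}) \Rightarrow (\mathrm{III}) \Rightarrow (\mathrm{V}) \Rightarrow (\mathrm{IV}) \Rightarrow (\mathrm{I})$, together with the trivial $(\mathrm{III}) \Rightarrow (\mathrm{II}) \Rightarrow (\mathrm{IV})$ which closes all the gaps. Most of the forward directions are already essentially in hand from the earlier sections: $(\mathrm{I}) \Rightarrow (\mathrm{III})$ is precisely Proposition~\ref{pr:iBoch} (in its pointwise/integrated form on $M_u$), $(\mathrm{III}) \Rightarrow (\mathrm{V})$ is Theorem~\ref{th:L1}, $(\mathrm{V}) \Rightarrow (\mathrm{IV})$ follows from Jensen's inequality applied to the linear positivity-preserving operator $P_{s,t}^{\Grad u}$ (squaring $F(\Grad u_t) \le \e^{-K(t-s)} P_{s,t}^{\Grad u}(F(\Grad u_s))$ and using $(P_{s,t}^{\Grad u} g)^2 \le P_{s,t}^{\Grad u}(g^2)$, which holds because $P_{s,t}^{\Grad u}$ is an $L^2$-contraction semigroup generated by a Dirichlet form), and $(\mathrm{II}) \Rightarrow (\mathrm{IV})$ is the outline given after Theorem~\ref{th:L2}. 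The implication $(\mathrm{III}) \Rightarrow (\mathrm{II})$ is immediate since $D[F(\Grad u)](\nabla^{\Grad u}[F(\Grad u)]) = g_{\Grad u}(\nabla^{\Grad u}[F(\Grad u)], \nabla^{\Grad u}[F(\Grad u)]) \ge 0$ on $M_u$.

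So the real content is the converse direction, and I would prove $(\mathrm{IV}) \Rightarrow (\mathrm{I})$ (equivalently one could argue from $(\mathrm{II})$ directly). The strategy is the classical differentiation-at-$t=s$ argument. Fix $x \in M$ and a unit vector $v \in U_xM$. The goal is to produce a function $u \in \cC^\infty_c(M)$ with $Du(x) \neq 0$, $\Grad u(x) = v$, whose second-order behavior at $x$ is well-controlled, run the heat flow from it, and differentiate the $L^2$-gradient estimate
\[ \e^{2K(t-s)} F^2(\Grad u_t(x)) - P_{s,t}^{\Grad u}\big( F^2(\Grad u_s) \big)(x) \le 0 \]
in $t$ at $t = s = 0$. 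Since the left-hand side vanishes at $t = 0$, its right derivative at $0$ must be $\le 0$; computing that derivative, the nonlinear-flow term contributes $2K F^2(\Grad u) + \del_t|_{t=0}[F^2(\Grad u_t)]$ and the linearized-semigroup term contributes $\Delta\!^{\Grad u}[F^2(\Grad u)]$, and using $\del_t u_t = \Lap u_t$ together with Euler's theorem (Theorem~\ref{th:Euler}) to rewrite $\del_t[F^2(\Grad u_t)/2]|_{t=0} = D[\Lap u](\Grad u)$, one arrives exactly at
\[ \Delta\!^{\Grad u}\bigg[\frac{F^2(\Grad u)}{2}\bigg](x) - D[\Lap u](\Grad u)(x) \ge K F^2(\Grad u)(x). \]
Then, by the Bochner--Weitzenböck formula (Theorem~\ref{th:BW}), the left-hand side equals $\Ric_\infty(\Grad u)(x) + \|\Grad^2 u(x)\|_{\HS(\Grad u)}^2$, so $\Ric_\infty(v) \ge K F^2(v) - \|\Grad^2 u(x)\|_{\HS(\Grad u)}^2$; choosing $u$ so that $\Grad^2 u(x)$ can be made arbitrarily small (a standard construction: take $u$ so that in $g_v$-normal-type coordinates adapted to the geodesic field through $v$ the Hessian vanishes at $x$) yields $\Ric_\infty(v) \ge K F^2(v)$.

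The main obstacle is justifying the differentiation under the limit at $t = 0$, i.e. the regularity bookkeeping. One needs that for the chosen smooth compactly supported initial datum the map $t \mapsto F^2(\Grad u_t(x))$ is differentiable at $0$ with derivative $D[\Lap u_0](\Grad u_0)(x)$ — this is fine near a point where $Du_0(x) \neq 0$ by the elliptic regularity of Theorem~\ref{th:hf} (the solution is $\cC^\infty$ on $\bigcup_{t}(\{t\}\times M_{u_t})$ there) — and, more delicately, that $t \mapsto P_{0,t}^{\Grad u}(F^2(\Grad u_0))(x)$ has right derivative $\Delta\!^{\Grad u}[F^2(\Grad u_0)](x)$ at $t=0$. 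The latter requires the initial regularity of the linearized semigroup from Proposition~\ref{pr:lin} together with continuity of the coefficient field $g_{V_t}$ at $t=0$ near $x$; since $Du_0$ is smooth and nonzero in a neighborhood of $x$, the linearized operator has smooth uniformly elliptic coefficients there for small $t$, and parabolic interior estimates give the needed one-sided differentiability. One must also check that the auxiliary-vector-field ambiguity of Remark~\ref{rm:V_t} is harmless, which it is because on a neighborhood of $x$ one has $V_t = \Grad u_t$ genuinely. I would handle these points by localizing to a small ball around $x$ where everything is classically smooth and citing the parabolic Schauder theory, then passing to the pointwise inequality. This is exactly the scheme used in the Riemannian case in \cite{vRS}, adapted via Theorem~\ref{th:BW}.
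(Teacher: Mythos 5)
The survey itself does not prove Theorem~\ref{th:char} (it is quoted from \cite{Oisop}), and your proposal reconstructs essentially the argument of that reference: the forward implications come from the (improved) Bochner inequality, the gradient-estimate arguments of Theorems~\ref{th:L2} and \ref{th:L1}, and Jensen's inequality for the sub-Markovian linearized semigroup, while the key converse is the von Renesse--Sturm differentiation of the $L^2$-gradient estimate at $t=s$ with a test function satisfying $\Grad u(x)=v$ and $\Grad^2 u(x)=0$, followed by the Bochner--Weitzenb\"ock formula \eqref{eq:BW}; this is correct, including your attention to the localization and parabolic regularity needed to differentiate $t\mapsto P^{\Grad u}_{0,t}(F^2(\Grad u_0))(x)$ at $t=0$. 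One small soft spot: your closure of (II) goes through ``(II) $\Rightarrow$ (IV) is the outline after Theorem~\ref{th:L2}'', but that outline uses the integrated inequality \eqref{eq:BWint}, which the paper derives from (I) and which does not follow immediately from the pointwise inequality (II) for smooth $u$ (heat-flow solutions are only $H^2_{\loc}\cap\cC^{1,\alpha}$); the clean fix is the route you mention parenthetically, namely (II) $\Rightarrow$ (I) directly by the same vanishing-Hessian choice of $u$ combined with \eqref{eq:BW}, after which (I) $\Rightarrow$ (IV) is Theorem~\ref{th:L2}.
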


In the Riemannian setting, one can add to this list
the \emph{contraction property} of heat flow with respect to
the \emph{$L^2$-Wasserstein distance} $W_2$
(see \cite{vRS,Ku} for details):
\[ W_2(u_t \fm,\bar{u}_t \fm) \le \e^{-Kt} W_2(u_0 \fm,\bar{u}_0 \fm) \]
for global solutions $(u_t)_{t \ge 0},(\bar{u}_t)_{t \ge 0}$
such that $u_0,\bar{u}_0 \ge 0$ as well as $\int_M u_0 \,d\fm=\int_M \bar{u}_0 \,d\fm=1$.
In our Finsler situation, however, the contraction property is known to fail (\cite{OSnc}).

\section{Functional inequalities}\label{sc:funct}

In this section we discuss three functional inequalities obtained in \cite{Ofunc}.
The proofs are based on the $\Gamma$-calculus,
we refer to the book \cite{BGL} for the usual linear setting.
In the reversible case, there is another technique called the \emph{localization}
which gives the same sharp estimates even in a more general framework
of essentially non-branching metric measure spaces
satisfying the curvature-dimension condition (see \cite{CMfunc}).
In the non-reversible case, however, the localization seems to give only
non-sharp estimates, see \S \ref{ssc:back} for a more detailed discussion.

\subsection{Poincar\'e--Lichnerowicz inequality}\label{ssc:Poin}

We start with the \emph{Poincar\'e--Lichnerowicz $($spectral gap$)$ inequality}
under the curvature bound $\Ric_N \ge K>0$.
The $N \in [n,\infty]$ case was shown in \cite{Oint}
as a consequence of the curvature-dimension condition $\CD(K,N)$.
In the Riemannian setting,
the case of $N \in (-\infty,0)$ was shown independently in \cite{KM} and \cite{Oneg}.

For simplicity we will assume that $M$ is compact
(this is automatically true when $N \in [n,\infty)$ and $M$ is complete),
and normalize $\fm$ as $\fm(M)=1$.
We will give the proofs since they are not long.

\begin{proposition}\label{pr:3.3.17}
Assume that $M$ is compact and satisfies
$\Ric_N \ge K$ for some $K \in \R$ and $N \in (-\infty,0) \cup [n,\infty]$.
Then we have, for any $f \in H^2(M) \cap \cC^1(M)$
such that $\Lap f \in H^1(M)$,
\[ \int_M \bigg\{ D[\Lap f](\Grad f) +K F^2(\Grad f)
 +\frac{(\Lap f)^2}{N} \bigg\} \,d\fm \le 0. \]
In particular, if $K>0$, then we have
\begin{equation}\label{eq:Lich-}
\int_M F^2(\Grad f) \,d\fm \le \frac{N-1}{KN} \int_M (\Lap f)^2 \,d\fm.
\end{equation}
If $N=\infty$, then the coefficient in the RHS of \eqref{eq:Lich-} is read as $1/K$.
\end{proposition}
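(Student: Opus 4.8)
The plan is to integrate the Bochner inequality in its integrated form against the constant test function and then perform a single integration by parts. First I would check that, on a compact $M$, the function $f$ meets the hypotheses of Theorem~\ref{th:BWint}: compactness gives $H^1_0(M)=H^1(M)$ and $H^2_{\loc}(M)=H^2(M)$, so $f \in H^2(M) \cap \cC^1(M)$ with $\Lap f \in H^1(M)$ is exactly the required regularity, and the constant function $\phi \equiv 1$ is a nonnegative element of $H^1_{\loc}(M) \cap L^\infty(M)$, hence admissible. Taking $u=f$ and $\phi \equiv 1$ in \eqref{eq:BWint}, the left-hand side vanishes because $D\phi=0$, and what remains is
\[ 0 \ge \int_M \Big\{ D[\Lap f](\Grad f) + K F^2(\Grad f) + \frac{(\Lap f)^2}{N} \Big\}\,d\fm, \]
which is precisely the first assertion of the proposition.

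For the second assertion, assume $K>0$. The key identity is $\int_M D[\Lap f](\Grad f)\,d\fm = -\int_M (\Lap f)^2\,d\fm$, obtained by testing the defining relation $\int_M \phi\,\Lap f\,d\fm = -\int_M D\phi(\Grad f)\,d\fm$ of the distributional Laplacian against $\phi=\Lap f$. Extending the class of test functions from $\cC^\infty_c(M)$ to $H^1(M)$ is legitimate here because $M$ is compact — smooth functions are dense in $H^1(M)$, the measure is finite, and the integrand is genuinely integrable since $F^*(D[\Lap f]) \in L^2(M)$ while $F(\Grad f)$ is bounded. Substituting this identity into the first assertion yields
\[ K\int_M F^2(\Grad f)\,d\fm \le \Big( 1-\frac{1}{N} \Big)\int_M (\Lap f)^2\,d\fm = \frac{N-1}{N}\int_M (\Lap f)^2\,d\fm, \]
and dividing by $K$ gives \eqref{eq:Lich-}; the case $N=\infty$ is the obvious limit $(N-1)/N \to 1$, and for $N<0$ the coefficient $(N-1)/(KN)$ remains positive.

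I do not expect a genuinely hard step: the proposition is an almost immediate corollary of Theorem~\ref{th:BWint}. The only two points demanding any care are the admissibility of $\phi \equiv 1$ and of $f$ in the integrated Bochner inequality (both automatic from compactness) and the one integration by parts, where the test function $\Lap f$ lies only in $H^1(M)$ rather than $\cC^\infty_c(M)$ — a non-issue on a compact manifold. I would also note in passing that applying \eqref{eq:Lich-} to an eigenfunction, $\Lap f = -\lambda f$, recovers the spectral gap bound $\lambda \ge KN/(N-1)$ (interpreted as $K$ when $N=\infty$), which is the reason for the name Poincar\'e--Lichnerowicz.
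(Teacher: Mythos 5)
Your proposal is correct and follows the paper's own argument: the first assertion comes from Theorem~\ref{th:BWint} with $\phi\equiv 1$, and \eqref{eq:Lich-} follows from the integration by parts $\int_M D[\Lap f](\Grad f)\,d\fm=-\|\Lap f\|_{L^2}^2$ and a rearrangement. The extra remarks on admissibility of the test functions and the eigenfunction interpretation are sound but do not change the route.
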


\begin{proof}
The first assertion is a direct consequence of Theorem~\ref{th:BWint} with $\phi \equiv 1$.
We further observe, by the integration by parts,
\[ K \int_M F^2(\Grad f) \,d\fm +\frac{\| \Lap f \|_{L^2}^2}{N}
 \le -\int_M D[\Lap f](\Grad f) \,d\fm
 =\| \Lap f \|_{L^2}^2. \]
Rearranging this inequality yields \eqref{eq:Lich-} when $K>0$.
$\qedd$
\end{proof}

Now the Poincar\'e--Lichnerowicz inequality is obtained
by a technique somewhat related to the proof of Theorem~\ref{th:L2}.
We define the \emph{variance} of $f \in L^2(M)$ (under $\fm(M)=1$) as
\[ \Var_{\fm}(f) :=\int_M f^2 \,d\fm -\bigg( \int_M f \,d\fm \bigg)^2. \]

\begin{theorem}[Poincar\'e--Lichnerowicz inequality]\label{th:Lich}
Let $M$ be compact, $\fm(M)=1$,
and suppose $\Ric_N \ge K>0$ for some $N \in (-\infty,0) \cup [n,\infty]$.
Then we have, for any $f \in H^1(M)$,
\[ \Var_{\fm}(f) \le \frac{N-1}{KN} \int_M F^2(\Grad f) \,d\fm. \]
\end{theorem}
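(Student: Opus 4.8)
The plan is to derive the Poincar\'e--Lichnerowicz inequality from Proposition~\ref{pr:3.3.17} by running the nonlinear heat flow and tracking the decay of the variance, using the $L^2$-gradient estimate in spirit. Given $f \in H^1(M)$, let $(u_t)_{t \ge 0}$ be the global solution to $\del_t u = \Lap u$ with $u_0 = f$; by Theorem~\ref{th:hf} this solution exists, is smooth enough to apply Proposition~\ref{pr:3.3.17} for $t>0$, and since $M$ is compact with $\fm(M)=1$, mass is conserved so $\int_M u_t \,d\fm = \int_M f \,d\fm$ for all $t$. The key quantity is $\Phi(t) := \Var_{\fm}(u_t) = \|u_t\|_{L^2}^2 - (\int_M f \,d\fm)^2$.

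First I would compute $\Phi'(t) = 2\int_M u_t \Lap u_t \,d\fm = -2\int_M F^2(\Grad u_t)\,d\fm = -4\cE(u_t)$, using the definition of the distributional Laplacian. Next I would compute the derivative of the energy along the flow: $\frac{d}{dt}\cE(u_t) = \int_M D[\del_t u_t](\Grad u_t)\,d\fm = \int_M D[\Lap u_t](\Grad u_t)\,d\fm$, where one must use Euler's homogeneous function theorem (Theorem~\ref{th:Euler}), applied to $F^2$ which is $2$-homogeneous, to justify $\frac{d}{dt}\big[\tfrac{1}{2}F^2(\Grad u_t)\big] = D[\del_t u_t](\Grad u_t)$ — this is the same subtlety handled in the outline of Theorem~\ref{th:L2}. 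Now invoke Proposition~\ref{pr:3.3.17} applied to $u_t$: since $K>0$ it gives $\int_M D[\Lap u_t](\Grad u_t)\,d\fm \le -K\int_M F^2(\Grad u_t)\,d\fm - \frac{1}{N}\int_M(\Lap u_t)^2\,d\fm \le -\frac{KN}{N-1}\cdot 2\cE(u_t)$ (using the rearrangement $\int F^2(\Grad u_t)\,d\fm \le \frac{N-1}{KN}\int(\Lap u_t)^2\,d\fm$ from \eqref{eq:Lich-} applied in the form $-K\int F^2 - \frac1N\int(\Lap u_t)^2 \le -\frac{KN}{N-1}\int F^2 \cdot \frac12$; more directly, $K\int F^2 + \frac1N\int(\Lap u_t)^2 \ge \frac{KN}{N-1}\int F^2$ follows from \eqref{eq:Lich-}). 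Hence $\frac{d}{dt}\cE(u_t) \le -2\lambda\, \cE(u_t)$ with $\lambda := \frac{KN}{N-1}$ (read as $K$ when $N=\infty$), so $\cE(u_t) \le \e^{-2\lambda t}\cE(f)$, and therefore $-\Phi'(t) = 4\cE(u_t) \le 4\e^{-2\lambda t}\cE(f)$.

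Then I would integrate from $0$ to $\infty$: $\Phi(0) - \lim_{t\to\infty}\Phi(t) = -\int_0^\infty \Phi'(t)\,dt \le 4\cE(f)\int_0^\infty \e^{-2\lambda t}\,dt = \frac{2\cE(f)}{\lambda} = \frac{2(N-1)}{KN}\cE(f) = \frac{N-1}{KN}\int_M F^2(\Grad f)\,d\fm$. It remains to check that $\lim_{t\to\infty}\Phi(t) \ge 0$, which is immediate since $\Var_{\fm} \ge 0$ always (or one can argue $u_t$ converges to the constant $\int_M f\,d\fm$ so $\Phi(t)\to 0$). This yields $\Var_{\fm}(f) = \Phi(0) \le \frac{N-1}{KN}\int_M F^2(\Grad f)\,d\fm$, as claimed. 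A density argument reduces the case of general $f\in H^1(M)$ to smooth $f$ if needed, though taking $u_0=f\in H^1_0(M)=H^1(M)$ (on compact $M$) already suffices by Theorem~\ref{th:hf}.

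The main obstacle I expect is the rigorous justification of the differentiation identities $\frac{d}{dt}\Phi(t) = -4\cE(u_t)$ and $\frac{d}{dt}\cE(u_t) = \int_M D[\Lap u_t](\Grad u_t)\,d\fm$, since $\Grad u_t$ is only continuous (not $\cC^1$) across the set $M\setminus M_{u_t}$ and the Laplacian is nonlinear; one must appeal to the regularity in Theorem~\ref{th:hf}, the homogeneity trick via Theorem~\ref{th:Euler} as in the Bochner arguments, and the integrated form to handle the boundary $\del M_{u_t}$ — exactly the technical care that underlies Theorems~\ref{th:BWint} and~\ref{th:L2}. A secondary point is confirming that Proposition~\ref{pr:3.3.17} is applicable to $u_t$ for every $t>0$, i.e.\ that $u_t \in H^2(M)\cap\cC^1(M)$ with $\Lap u_t\in H^1(M)$; on compact $M$ this follows from Theorem~\ref{th:hf}(ii) together with $\sS_F<\infty$ being automatic on a compact manifold.
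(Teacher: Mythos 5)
Your proposal follows essentially the same route as the paper's own proof: run the nonlinear heat flow from $f$, apply Proposition~\ref{pr:3.3.17} (i.e.\ \eqref{eq:Lich-}) along the flow to obtain a differential inequality between variance and energy, and integrate in time. The paper works with $\Phi(t)=\|u_t\|_{L^2}^2$ and integrates the second-order inequality $-2\Phi'(t)\le\frac{N-1}{KN}\Phi''(t)$ directly (using ergodicity for $\lim_{t\to\infty}\Phi(t)$ and $\lim_{t\to\infty}\Phi'(t)=0$), while you pass through the equivalent exponential decay $\cE(u_t)\le\e^{-2\lambda t}\cE(f)$ with $\lambda=KN/(N-1)$ and use $\Var_{\fm}\ge 0$; this is only a cosmetic difference.

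One step as written fails for $N<0$, which the theorem includes. In your chain, the second inequality $-K\int_M F^2(\Grad u_t)\,d\fm-\frac{1}{N}\int_M(\Lap u_t)^2\,d\fm\le-\frac{KN}{N-1}\cdot 2\cE(u_t)$ is, when $N<0$, equivalent to $\|\Lap u_t\|_{L^2}^2\le\frac{KN}{N-1}\int_M F^2(\Grad u_t)\,d\fm$, which is the \emph{reverse} of \eqref{eq:Lich-}; the same sign reversal affects your parenthetical ``more directly'' claim (multiplying by $N<0$ and by $N-1<0$ flips the inequality an odd number of times). The repair is immediate and is already implicit in your computation of $\Phi'$: by the weak formulation of $\Lap$ (with test function $\Lap u_t\in H^1_0(M)$, available since $\sS_F<\infty$ on compact $M$), one has the exact identity $\frac{d}{dt}\cE(u_t)=\int_M D[\Lap u_t](\Grad u_t)\,d\fm=-\|\Lap u_t\|_{L^2}^2$, and \eqref{eq:Lich-} — whose constant $\frac{N-1}{KN}$ is positive also for $N<0$ — gives $\|\Lap u_t\|_{L^2}^2\ge\frac{KN}{N-1}\int_M F^2(\Grad u_t)\,d\fm=2\lambda\cE(u_t)$ for every admissible $N$ (read as $K$ when $N=\infty$). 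Hence $\frac{d}{dt}\cE(u_t)\le-2\lambda\cE(u_t)$ holds in the full range, and the rest of your argument goes through unchanged.
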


\begin{proof}
Let $(u_t)_{t \ge 0}$ be the solution to the heat equation with $u_0=f$,
and put $\Phi(t):=\|u_t\|_{L^2}^2$ for $t \ge 0$.
Observe first that the \emph{ergodicity}
\begin{equation}\label{eq:ergo}
u_t\ \to\ \int_M u_0 \,d\fm \quad \text{in}\ L^2(M)
\end{equation}
holds since $\lim_{t \to \infty}\cE(u_t)=0$.
It follows from the heat equation that
\[ \Phi'(t) =2\int_M u_t \Lap u_t \,d\fm
 =-2\int_M F^2(\Grad u_t) \,d\fm =-4\cE(u_t), \]
as well as
\[ \lim_{\delta \downarrow 0}\frac{\Phi'(t+\delta)-\Phi'(t)}{\delta}
 =-4 \lim_{\delta \downarrow 0}\frac{\cE(u_{t+\delta})-\cE(u_t)}{\delta}
 =4\| \Lap u_t \|_{L^2}^2 \]
for all $t>0$.
Then \eqref{eq:Lich-} implies
\begin{equation}\label{eq:Lich0}
-2\Phi'(t) \le \frac{N-1}{KN} \lim_{\delta \downarrow 0}\frac{\Phi'(t+\delta)-\Phi'(t)}{\delta}.
\end{equation}
Notice also that the ergodicity \eqref{eq:ergo} implies
$\lim_{t \to \infty} \Phi(t) =(\int_M f \,d\fm)^2$.
Thus the differential inequality \eqref{eq:Lich0} yields
\[ \Var_{\fm}(f) =-\int_0^{\infty} \Phi'(t) \,dt
 \le \frac{N-1}{2KN} \bigg( \lim_{t \to \infty}  \Phi'(t)-\Phi'(0) \bigg)
 =\frac{2(N-1)}{KN} \cE(f). \]
This completes the proof.
$\qedd$
\end{proof}

\subsection{Logarithmic Sobolev inequality}\label{ssc:log}

We next study the \emph{logarithmic Sobolev inequality}.
From here on we consider only $N \in [n,\infty)$ with $K>0$
(actually the inequality fails for $N<0$, see Remark~\ref{rm:LSI} below).
See \cite{Oint} for the case of $N=\infty$.
Recall from Theorem~\ref{th:wBM} that $\Ric_N \ge K$ implies the compactness of $M$,
thus we normalize $\fm$ as $\fm(M)=1$ without loss of generality.
We first consider a sufficient condition for the logarithmic Sobolev inequality.

\begin{proposition}\label{pr:5.7.3}
Assume that $M$ is compact, $\Ric_{\infty} \ge K>0$, and
\begin{equation}\label{eq:5.7.3}
\int_M \frac{F^2(\Grad u)}{u} \,d\fm
 \le -C \int_M \bigg\{ Du \bigg( \nabla^{\Grad u} \bigg[ \frac{F^2(\Grad u)}{2u^2} \bigg] \bigg)
 +u D[\Lap(\log u)](\Grad[\log u]) \bigg\} \,d\fm
\end{equation}
holds for some constant $C>0$ and all functions $u \in H^2(M) \cap \cC^1(M)$
such that $\Lap u \in H^1(M)$ and $\inf_M u>0$.
Then the logarithmic Sobolev inequality
\[ \int_{\{f>0\}} f \log f \,d\fm \le \frac{C}{2} \int_{\{f>0\}} \frac{F^2(\Grad f)}{f} \,d\fm \]
holds for all nonnegative functions $f \in H^1(M)$ with $\int_M f \,d\fm=1$.
\end{proposition}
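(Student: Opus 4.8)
The plan is to run the nonlinear $\Gamma$-calculus version of the Bakry--\'Emery heat-flow argument: a de Bruijn-type identity along heat flow combined with a Gr\"onwall estimate, where the hypothesis \eqref{eq:5.7.3} is exactly the differential inequality that closes the loop. First I would reduce to the case $\inf_M f>0$. Given a nonnegative $f\in H^1(M)$ with $\int_M f\,d\fm=1$ (and with $\int_{\{f>0\}}F^2(\Grad f)/f\,d\fm<\infty$, otherwise nothing is to prove), replace $f$ by $f_\ve:=(f+\ve)/(1+\ve)$, which still has unit mass and satisfies $\inf_M f_\ve>0$. Since $\Grad f_\ve=(1+\ve)^{-1}\Grad f$ and $\Grad f=0$ almost everywhere on $\{f=0\}$, monotone convergence lets one pass to the limit $\ve\downarrow 0$ at the end and recover both sides of the asserted inequality.

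\textbf{Heat flow and the de Bruijn identity.} Assume now $\inf_M f=c_0>0$ and let $(u_t)_{t\ge 0}$ be the global solution of $\del_t u=\Lap u$ with $u_0=f$. On the compact $M$ one has $\sS_F<\infty$ automatically ($\sS_F$ is a supremum of a continuous function over a compact set), so Theorem~\ref{th:hf} gives $u_t\in H^2(M)\cap\cC^1(M)$ with $\Lap u_t\in H^1(M)$ for $t>0$, while the maximum principle gives $\inf_M u_t\ge c_0$ and $\sup_M u_t\le\sup_M f<\infty$; hence each $u_t$ ($t>0$) is admissible in \eqref{eq:5.7.3}. Set $\Ent(t):=\int_M u_t\log u_t\,d\fm$ and the Fisher information $I(t):=\int_M F^2(\Grad u_t)/u_t\,d\fm$. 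Using $\int_M\Lap u_t\,d\fm=0$, integration by parts, and $Du(\Grad u)=F^2(\Grad u)$, one obtains $\Ent'(t)=\int_M(\log u_t)\,\Lap u_t\,d\fm=-I(t)$.

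\textbf{Differentiating $I$, and conclusion.} Writing $w_t:=\log u_t$, so that $\Grad w_t=u_t^{-1}\Grad u_t$ by positive $1$-homogeneity of $\cL^*$, and using the identity $\del_t[F^2(\Grad u_t)/2]=D[\del_t u_t](\Grad u_t)$ (the one behind the outline of Theorem~\ref{th:L2}), the symmetry \eqref{eq:f1f2}, and an integration by parts eliminating $\int_M\e^{w_t}(\Lap w_t)F^2(\Grad w_t)\,d\fm$, a direct computation yields
\[ \tfrac12 I'(t)=\int_M\bigg\{Du_t\bigg(\nabla^{\Grad u_t}\bigg[\frac{F^2(\Grad u_t)}{2u_t^2}\bigg]\bigg)+u_t\,D[\Lap(\log u_t)](\Grad[\log u_t])\bigg\}\,d\fm. \]
Thus \eqref{eq:5.7.3} applied to $u=u_t$ reads $I(t)\le-\tfrac{C}{2}I'(t)$, i.e. $(\e^{2t/C}I(t))'\le 0$, so $I(t)\le\e^{-2t/C}I(0)$ and $\int_0^\infty I(t)\,dt\le\tfrac{C}{2}I(0)$. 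Finally, the spectral gap coming from $\Ric_\infty\ge K>0$ (Theorem~\ref{th:Lich}, together with $\frac{d}{dt}\|u_t\|_{L^2}^2=-4\cE(u_t)$ and the non-increase of $t\mapsto\cE(u_t)$ along the gradient flow) gives the ergodicity $u_t\to\int_M f\,d\fm=1$ in $L^2(M)$; since $0\le u_t\le\sup_M f$, this forces $\Ent(t)\to 0$. Combined with $\Ent(0)-\Ent(t)=\int_0^t I(s)\,ds$ we get $\int_{\{f>0\}}f\log f\,d\fm=\Ent(0)=\int_0^\infty I(t)\,dt\le\tfrac{C}{2}I(0)=\tfrac{C}{2}\int_{\{f>0\}}F^2(\Grad f)/f\,d\fm$, and passing $\ve\downarrow 0$ completes the proof.

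\textbf{Main obstacle.} The delicate point is the differentiation-under-the-integral step producing the formula for $\tfrac12 I'(t)$ and matching it with \eqref{eq:5.7.3}: one must justify that $t\mapsto I(t)$ is differentiable and that the derivative passes inside, which relies on the parabolic regularity of $u_t$ away from $t=0$ and on controlling the behaviour near $t=0$, where $\Grad u_t$ is only continuous, so the integrand $F^2(\Grad u_t)/u_t$ must be handled via the a priori bound $\inf_M u_t\ge c_0$ and the $\cC^1$-convergence $u_t\to f$. A secondary nuisance is non-reversibility: one cannot interchange the roles of gradients, so all the integrations by parts and the applications of \eqref{eq:f1f2} must be carried out with the reference vector field $\Grad u_t$ fixed throughout.
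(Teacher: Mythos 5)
Your argument is essentially the paper's own proof: run the heat flow from $f$, identify $\Psi'(t)=-I(t)$, observe that \eqref{eq:5.7.3} applied to $u_t$ is exactly the differential inequality $-2\Psi'(t)\le C\Psi''(t)$, and integrate using $\Psi(t),\Psi'(t)\to 0$; your Gr\"onwall step and the explicit $\ve$-regularization and ergodicity details are just fleshed-out versions of what the paper's outline leaves implicit (and your computation of $\tfrac12 I'(t)$ does match the bracket in \eqref{eq:5.7.3}). So the proposal is correct and takes the same route.
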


\begin{outline}
We shall assume $\inf_M f>0$ and
use a similar method to the proof of Theorem~\ref{th:Lich} for
the \emph{relative entropy}
\[ \Psi(t) =\Ent_{\fm}(u_t \fm) :=\int_M u_t \log u_t \,d\fm, \]
where $(u_t)_{t \ge 0}$ is the solution to the heat equation with $u_0=f$.
The inequality \eqref{eq:5.7.3} then yields the differential inequality $-2\Psi'(t) \le C\Psi''(t)$.
Together with $\lim_{t \to \infty} \Psi(t)=\lim_{t \to \infty} \Psi'(t)=0$,
we obtain
\[ \int_M f \log f \,d\fm =-\int_0^{\infty} \Psi'(t) \,dt
 \le \frac{C}{2} \int_0^{\infty} \Psi''(t) \,dt =-\frac{C}{2} \Psi'(0). \]
This is indeed the desired inequality.
$\qedd$
\end{outline}

See \cite{Ofunc} and \cite{BGL} for the omitted calculations
in the proofs of the proposition above and the theorem below.

\begin{theorem}[Logarithmic Sobolev inequality]\label{th:LSI}
Assume that $\Ric_N \ge K>0$ for some $N \in [n,\infty)$ and $\fm(M)=1$.
Then we have
\[ \int_{\{f>0\}} f \log f \,d\fm \le \frac{N-1}{2KN} \int_{\{f>0\}} \frac{F^2(\Grad f)}{f} \,d\fm \]
for all nonnegative functions $f \in H^1(M)$ with $\int_M f \,d\fm=1$.
\end{theorem}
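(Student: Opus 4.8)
\emph{Proof strategy.} The plan is to argue by $\Gamma$-calculus along the nonlinear heat flow, in the same spirit as Theorem~\ref{th:Lich} and Proposition~\ref{pr:5.7.3}: monitor the relative entropy along the flow and reduce the inequality to a second-order differential inequality in time. Since $\Ric_N\ge K>0$ with $N\in[n,\infty)$ makes $M$ compact (Theorem~\ref{th:wBM}), the normalisation $\fm(M)=1$ is harmless and the heat semigroup is ergodic; and by a standard truncation and density argument it is enough to prove the estimate for $f\in\cC^\infty(M)$ with $\inf_M f>0$ — so that $\{f>0\}=M$ and $\log f$, $F^2(\Grad f)/f$ are smooth and bounded — the general nonnegative $f\in H^1(M)$ then following by approximation, using the lower semicontinuity of the right-hand side.

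Let $(u_t)_{t\ge0}$ solve $\partial_t u=\Lap u$ with $u_0=f$; it conserves mass, $\int_M u_t\,d\fm=1$, stays uniformly positive on bounded time intervals, and by Theorem~\ref{th:hf} has the regularity needed below. Put $\Psi(t):=\Ent_\fm(u_t\fm)=\int_M u_t\log u_t\,d\fm$. The heat equation and an integration by parts give $\Psi'(t)=-\int_M F^2(\Grad u_t)/u_t\,d\fm=:-I(u_t)$, and one more differentiation, combined with the integrated Bochner inequality (Theorem~\ref{th:BWint}) applied to $\log u_t$ with weight $u_t$ — exactly as in the proof of Proposition~\ref{pr:5.7.3}, which in turn follows that of Theorem~\ref{th:Lich} — produces a lower bound for $\Psi''(t)$ assembled from $\Ric_N(\Grad\log u_t)$ and $(\Lap\log u_t)^2/N$. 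Since $K>0$, ergodicity forces $u_t\to1$ in $L^2(M)$, hence $\Psi(t)\to0$ and $\Psi'(t)=-I(u_t)\to0$ as $t\to\infty$; so once the differential inequality
\[ -2\Psi'(t)\le\frac{N-1}{KN}\,\Psi''(t),\qquad t>0, \]
is established — this being precisely the sufficient condition \eqref{eq:5.7.3} with $C=\tfrac{N-1}{KN}$ — integrating over $[0,\infty)$ yields $2\Psi(0)\le-\tfrac{N-1}{KN}\Psi'(0)=\tfrac{N-1}{KN}I(f)$, which is the asserted inequality.

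The crux, and the step I expect to be hardest, is producing this differential inequality with the \emph{sharp} coefficient $\tfrac{N-1}{KN}$. The plain dimensional Bochner inequality, together with $\int_M u_t\Lap\log u_t\,d\fm=-I(u_t)=\Psi'(t)$ and Cauchy--Schwarz ($\int_M u_t(\Lap\log u_t)^2\,d\fm\ge\Psi'(t)^2$, legitimate since $\int_M u_t\,d\fm=1$), gives only $\Psi''\ge-2K\Psi'+\tfrac2N(\Psi')^2$, which integrates merely to the multiplicative bound $\Psi(0)\le\tfrac N2\log\!\big(1+\tfrac{I(f)}{KN}\big)$ and hence to the non-sharp constant $\tfrac1{2K}$. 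To recover the factor $\tfrac{N-1}{N}$ one must instead use a \emph{refined} dimensional Bochner inequality, keeping both the term $(\Lap u)^2/N$ and a self-improvement in the spirit of Proposition~\ref{pr:iBoch} — expressed through the ``radial'' second derivative $g_{\Grad v}(\Grad^2 v(\Grad v),\Grad v)/F^2(\Grad v)$ of $v=\log u_t$ — and then combine the resulting extra nonnegative terms after one more integration by parts. In the reversible or linear theory this refinement comes from a self-improving property of the Bochner inequality, but in the non-reversible Finsler setting the self-improvement is not available in general (cf.\ the discussion before Proposition~\ref{pr:iBoch}), so one has to establish the refined dimensional Bochner inequality by a direct computation and verify that the surplus terms exactly upgrade $\tfrac1{2K}$ to $\tfrac{N-1}{2KN}$. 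The remaining parts are routine adaptations of the arguments for Theorem~\ref{th:Lich} and Proposition~\ref{pr:5.7.3}, and the omitted calculations are carried out in \cite{Ofunc,BGL}.
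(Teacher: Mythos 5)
Your outer framework is exactly the paper's: Proposition~\ref{pr:5.7.3} reduces the claim to the integrated inequality \eqref{eq:5.7.3} with $C=(N-1)/(KN)$, i.e.\ to the differential inequality $-2\Psi'(t)\le \frac{N-1}{KN}\Psi''(t)$ along the heat flow, and the ergodicity and integration steps you describe are the ones used there. The genuine gap is that you never produce this inequality with the sharp constant. You observe (correctly) that the plain dimensional Bochner inequality plus Cauchy--Schwarz only yields $\Psi''\ge-2K\Psi'+\frac{2}{N}(\Psi')^2$, hence the non-sharp constant, and then you assert that a ``refined dimensional Bochner inequality'' combining the $(\Lap u)^2/N$ term with a self-improvement in the spirit of Proposition~\ref{pr:iBoch} must be established ``by a direct computation''. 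That is precisely the missing content, and the mechanism you point to is doubtful: Proposition~\ref{pr:iBoch} is an $N=\infty$ statement, and the self-improving property of the Bochner inequality is not known in the non-reversible Finsler setting (the paper notes it only works for reversible $F$), so no finite-$N$ self-improved Bochner inequality is available to you; your plan stalls exactly at the step you yourself flag as the hardest.

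The paper's actual device is different and avoids any self-improvement. Writing $u=\e^{h}$, it applies the ordinary $N$-dimensional Bochner inequality \eqref{eq:Boc} not to $h$ but to the rescaled function $\e^{ah}$ with a free parameter $a>0$ (positivity being needed so that $\Grad(\e^{ah})=a\e^{ah}\Grad h$ under the nonlinear Legendre transform), expands by the chain rule to get \eqref{eq:4.7}, and combines this with the integration-by-parts identity \eqref{eq:ah} (used with $a=1/2$) against the weight $\e^{h}$; the resulting one-parameter family of inequalities \eqref{eq:4.8} is optimized at $a=3/\{2(N+2)\}$, which annihilates the mixed term $D[F^2(\Grad h)](\Grad h)$ and leaves a nonnegative coefficient on $F^4(\Grad h)$ (since $(a-1)^2-Na^2\ge 0$ for this choice and $N\ge 2$), yielding $\frac{N-1}{N}\int_M\e^h\Gamma_2(h)\,d\fm\ge K\int_M\e^h F^2(\Grad h)\,d\fm$, i.e.\ \eqref{eq:5.7.3} with $C=(N-1)/(KN)$; an approximation argument then finishes. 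To complete your proof you would need to carry out this parameter optimization (or an equivalent direct computation); as written, the sharp constant $\frac{N-1}{2KN}$ is asserted rather than proved.
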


\begin{outline}
Fix $h \in \cC^{\infty}(M)$ and consider the function $\e^{ah}$ for $a>0$ (chosen later).
For brevity let us introduce the notation common in the $\Gamma$-calculus:
\[ \Gamma_2(h) :=\Delta\!^{\Grad h} \bigg[ \frac{F^2(\Grad h)}{2} \bigg]
 -D[\Lap h](\Grad h). \]
On the one hand, by the chain rule and $a>0$, we calculate
\[ \Gamma_2(\e^{ah}) =a^2 \e^{2ah}
 \big\{ \Gamma_2(h) +aD[F^2(\Grad h)](\Grad h) +a^2 F^4(\Grad h) \big\} \]
(notice that $a>0$ yields $\Grad(\e^{ah})=a\e^{ah} \Grad h$).
On the other hand, it follows from the integration by parts that
\[ \int_M \Gamma_2(\e^{ah}) \,d\fm =a^2 \int_M \e^{2ah}
 \big\{ (\Lap h)^2 -2aD[F^2(\Grad h)](\Grad h) -3a^2 F^4(\Grad h) \big\} \,d\fm. \]
Comparing these yields
\begin{equation}\label{eq:ah}
\int_M \e^{2ah} (\Lap h)^2 \,d\fm =\int_M \e^{2ah}
 \big\{ \Gamma_2(h) +3aD[F^2(\Grad h)](\Grad h) +4a^2 F^4(\Grad h) \big\} \,d\fm.
\end{equation}

We apply the Bochner inequality \eqref{eq:Boc} to $\e^{ah}$,
now written as $\Gamma_2(\e^{ah}) \ge KF^2(\Grad \e^{ah}) +(\Lap \e^{ah})^2/N$,
and see
\begin{align}
&\Gamma_2(h) +aD[F^2(\Grad h)](\Grad h) +a^2 F^4(\Grad h) \nonumber\\
&\ge KF^2(\Grad h) +\frac{1}{N}
 \big\{ (\Lap h)^2 +2aF^2(\Grad h) \Lap h +a^2 F^4(\Grad h) \big\} \label{eq:4.7}
\end{align}
in the weak sense.
Integrating this inequality multiplied by the test function $\e^h$,
using \eqref{eq:ah} with $a=1/2$ and rearranging, we obtain
\begin{align}
\bigg( 1-\frac{1}{N} \bigg) \int_M \e^h \Gamma_2(h) \,d\fm
&\ge K\int_M \e^h F^2(\Grad h) \,d\fm \nonumber\\
&\quad +\frac{3-2(N+2)a}{2N} \int_M \e^h D[F^2(\Grad h)](\Grad h) \,d\fm \nonumber\\
&\quad +\frac{(a-1)^2-Na^2}{N} \int_M \e^h F^4(\Grad h) \,d\fm. \label{eq:4.8}
\end{align}
We finally choose $a=3/\{2(N+2)\}>0$ and conclude that
\[ \frac{N-1}{N} \int_M \e^h \Gamma_2(h) \,d\fm
 \ge K\int_M \e^h F^2(\Grad h) \,d\fm. \]
This is the desired inequality \eqref{eq:5.7.3} for $u=\e^h$.
We complete the proof by an approximation argument.
$\qedd$
\end{outline}

By the standard implication going back to \cite{OV},
we have the \emph{Talagrand inequality} as a corollary.

\begin{corollary}[Talagrand inequality]\label{cr:Tala}
Assume that $\Ric_N \ge K>0$ for $N \in [n,\infty)$ and $\fm(M)=1$.
Then we have, for all $\mu \in \cP(M)$,
\[ W_2^2(\mu,\fm) \le \frac{2(N-1)}{KN} \Ent_{\fm}(\mu), \]
where $W_2$ is the $L^2$-Wasserstein distance.
\end{corollary}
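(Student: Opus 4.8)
The plan is to follow the classical Otto--Villani argument (\cite{OV}): deduce the Talagrand inequality from the logarithmic Sobolev inequality of Theorem~\ref{th:LSI} by running the nonlinear heat flow starting from $\mu$ and estimating the length of the resulting curve in the $L^2$-Wasserstein space. First I would reduce to a convenient class of measures. If $\Ent_\fm(\mu)=\infty$ there is nothing to prove, so we may write $\mu=f\fm$ with $f\in L^1(M)$ and $\int_M f\,d\fm=1$; by a standard approximation (regularizing $f$ and bounding it away from $0$, using lower semicontinuity of $\Ent_\fm$ and continuity of $W_2$ on the compact space $M$) it suffices to treat $f\in\cC^\infty(M)$ with $\inf_M f>0$. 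Let $(u_t)_{t\ge 0}$ be the solution of $\del_t u=\Lap u$ with $u_0=f$ and set $\mu_t:=u_t\fm$. As in the proof of Theorem~\ref{th:Lich}, ergodicity gives $u_t\to 1$ in $L^2(M)$, hence $\mu_t\to\fm$ and $\Ent_\fm(\mu_t)\to 0$, while the maximum principle keeps $\inf_M u_t>0$ and $\int_M u_t\,d\fm=1$.

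Next I would set $\Psi(t):=\Ent_\fm(\mu_t)$ and let $I(t):=\int_M F^2(\Grad u_t)/u_t\,d\fm$ be the Fisher information along the flow. Using $\Grad(\log u_t)=u_t^{-1}\Grad u_t$ (positive $1$-homogeneity of the Legendre transform), $\int_M\Lap u_t\,d\fm=0$, and the heat equation, one computes $\Psi'(t)=\int_M \Lap u_t\cdot\log u_t\,d\fm=-\int_M D[\log u_t](\Grad u_t)\,d\fm=-I(t)$. Applying Theorem~\ref{th:LSI} to $u_t$ then reads $\Psi(t)\le \frac{N-1}{2KN}\,I(t)=-\frac{N-1}{2KN}\,\Psi'(t)$, i.e. the differential inequality $2\Psi\le -\frac{N-1}{KN}\Psi'$.

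The length estimate is the heart of the matter: $(\mu_t)_{t\ge 0}$ is a trajectory of the gradient flow of $\Ent_\fm$ in the $L^2$-Wasserstein space, a fact established in \cite{OShf}, from which the metric speed of the curve satisfies $|\dot\mu_t|^2\le I(t)$. Writing $C:=\frac{N-1}{KN}$ and using $\Psi\le\frac{C}{2}I$ (so that $\sqrt{I(t)}\le I(t)/\sqrt{(2/C)\Psi(t)}=-\sqrt{2C}\,\frac{d}{dt}\sqrt{\Psi(t)}$ whenever $\Psi(t)>0$), I would conclude
\[
W_2(\mu,\fm)\le\int_0^\infty|\dot\mu_t|\,dt\le\int_0^\infty\sqrt{I(t)}\,dt
 \le\sqrt{2C}\int_0^\infty\Bigl(-\frac{d}{dt}\sqrt{\Psi(t)}\Bigr)\,dt=\sqrt{2C\,\Ent_\fm(\mu)},
\]
where the last equality uses $\Psi(\infty)=0$ and $\Psi(0)=\Ent_\fm(\mu)$; squaring gives the claimed bound $W_2^2(\mu,\fm)\le\frac{2(N-1)}{KN}\Ent_\fm(\mu)$.

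The main obstacle is precisely the energy-dissipation inequality $|\dot\mu_t|^2\le I(t)$, which is the Finsler analogue of the De Giorgi identity, together with the bookkeeping of non-reversibility: the Fisher information in Theorem~\ref{th:LSI} is defined through $F$, whereas the identification of heat flow as the gradient flow of $\Ent_\fm$ in \cite{OShf} lives in the Wasserstein space built from the \emph{reverse} structure $\rev{F}$, so one must check that the relevant one-sided estimate $|\dot\mu_t|^2\le I(t)$ (with $F$) is the one compatible with that framework. Granting this, the remaining ingredients---the ergodicity $\mu_t\to\fm$, the elementary ODE comparison, and the approximation from smooth positive densities to general $\mu\in\cP(M)$---are routine, and one could equally well route the argument through the HWI inequality at the cost of some extra length.
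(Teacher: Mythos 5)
Your skeleton is exactly the argument the paper has in mind: in the text the corollary is given no proof beyond the remark that it follows from Theorem~\ref{th:LSI} ``by the standard implication going back to \cite{OV}'', and your heat-semigroup reconstruction of that implication is the intended one. The entropy computation $\Psi'(t)=-I(t)$, the use of Theorem~\ref{th:LSI} as the differential inequality $\Psi\le-\frac{N-1}{2KN}\Psi'$, the integration $\int_0^\infty\sqrt{I(t)}\,dt\le\sqrt{2C\,\Ent_{\fm}(\mu)}$, and the reduction to smooth densities bounded away from $0$ on the compact $M$ are all fine.

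The step you flag and then grant, namely $|\dot\mu_t|^2\le I(t)$ with the \emph{forward} $W_2$-speed and the $F$-Fisher information $I(t)=\int_M F^2(\Grad u_t)/u_t\,d\fm$, is however precisely where the argument as written fails in the non-reversible case. Along the heat flow for $F$ the continuity equation holds with velocity $V_t=-\Grad(\log u_t)$, so the cost of pushing $\mu_t$ forward to $\mu_{t+\ve}$ is governed by $F(V_t)=F(-\Grad\log u_t)=\rev{F}(\Grad\log u_t)$, i.e.\ by the \emph{reverse} Fisher information, which may exceed $I(t)$ by a factor up to $\Lambda_F^2$; there is no reason for the forward $W_2$-speed to be bounded by $\sqrt{I(t)}$. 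What the identification in \cite{OShf} actually provides is that the speed measured in the Wasserstein space built from $\rev{F}$ is $\sqrt{I(t)}$ (consistently with $-\Psi'=I$), and chaining along the curve then bounds the distance from $\mu$ to $\fm$ in that space, which equals $W_2(\fm,\mu)$. Thus your argument, carried out correctly, yields $W_2^2(\fm,\mu)\le\frac{2(N-1)}{KN}\Ent_{\fm}(\mu)$, the reversed inequality. The missing idea is the same reversal trick used in the proof of Theorem~\ref{th:isop}: since $\rev{\Ric}_N(v)=\Ric_N(-v)$, the hypothesis $\Ric_N\ge K$ and hence Theorem~\ref{th:LSI} hold verbatim for $\rev{F}$, and running your entire argument for $\rev{F}$ (whose heat flow is the gradient flow of $\Ent_{\fm}$ in the Wasserstein space of $F$) produces exactly the stated $W_2^2(\mu,\fm)\le\frac{2(N-1)}{KN}\Ent_{\fm}(\mu)$. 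With that one correction the proof closes and coincides with the standard Otto--Villani implication the paper invokes.
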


\begin{remark}\label{rm:LSI}
Different from the Poincar\'e--Lichnerowicz inequality in the previous section,
$N$ cannot be negative in the logarithmic Sobolev and Talagrand inequalities.
This is because the Talagrand inequality implies the \emph{normal concentration} of $\fm$ (\cite{Led}),
while the model space in \cite{Mineg,Oneg} enjoys only the \emph{exponential concentration}.
See \cite{Ma} for further details.
\end{remark}

\subsection{Sobolev inequalities}\label{ssc:Sobo}

Our next object is the \emph{Sobolev inequality}.
We will first obtain a non-sharp inequality
followed by some qualitative consequences.
Then, with the help of those properties, we proceed to the sharp estimate.
The resulting inequality (Theorem~\ref{th:Sobo}) is the same as the Riemannian case,
however, we will need an additional care on the admissible range of the exponent $p$
when $F$ is non-reversible.

\begin{proposition}[Logarithmic entropy-energy inequality]\label{pr:6.8.1}
Assume $\Ric_N \ge K>0$ for $N \in [n,\infty)$ and $\fm(M)=1$.
Then we have
\[ \Ent_{\fm}(f^2 \fm)
 \le \frac{N}{2}\log \bigg( 1+\frac{4}{KN} \int_M F^2(\Grad f) \,d\fm \bigg) \]
for all $f \in H^1(M)$ with $\int_M f^2 \,d\fm=1$.
\end{proposition}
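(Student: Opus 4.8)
The plan is to apply the logarithmic Sobolev inequality (Theorem~\ref{th:LSI}) to a function built out of $f$, and then optimize. Concretely, for $f$ with $\int_M f^2\,d\fm=1$ the natural candidate is $g:=f^2$, which is nonnegative with $\int_M g\,d\fm=1$, so that Theorem~\ref{th:LSI} gives
\[ \Ent_{\fm}(f^2\fm) \le \frac{N-1}{2KN} \int_{\{f \neq 0\}} \frac{F^2(\Grad(f^2))}{f^2} \,d\fm. \]
Since $\Grad(f^2)=2f\,\Grad f$ on $\{f>0\}$ and $F$ is positively homogeneous, $F^2(\Grad(f^2))=4f^2 F^2(\Grad f)$ there, so the right-hand side collapses to $\frac{2(N-1)}{KN}\int_M F^2(\Grad f)\,d\fm$. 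This already yields a \emph{linear} bound in the energy, which is weaker than the claimed logarithmic one for large energy but stronger for small energy.

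To upgrade to the logarithmic form, I would exploit the freedom to rescale. Apply the linear bound not to $f$ but to $f$ after adding the right amount of "mass at a point" --- more precisely, use the standard trick of combining the log-Sobolev inequality with a clever choice of test function. The cleanest route is a \emph{tensorization/scaling} argument: replace $\fm$-integration against $f^2$ by working with $h:=\lambda f$ and noticing how both sides of the log-Sobolev inequality transform, then optimize over an auxiliary parameter. Alternatively, and more in the spirit of the $\Gamma$-calculus used here, one runs the heat flow: set $u_0=f^2$, let $\Lambda(t):=\Ent_{\fm}(u_t\fm)$, and use that along the heat flow the log-Sobolev inequality controls $\Lambda(t)$ by the Fisher information $\mathcal I(u_t):=\int \frac{F^2(\Grad u_t)}{u_t}\,d\fm=-\Lambda'(t)$, while the de~Bruijn-type identity and the Bochner inequality $\Ric_N\ge K>0$ give a differential inequality for $\mathcal I(u_t)$ itself (this is exactly the dimensional improvement of the log-Sobolev inequality, as in \cite{BGL}). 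Integrating that differential inequality from $0$ to $\infty$, using the ergodicity $u_t\to 1$ and $\Lambda(t),\Lambda'(t)\to 0$, produces the inequality
\[ \Lambda(0) \le \frac{N}{2}\log\!\Big(1+\frac{2}{KN}\mathcal I(u_0)\Big), \]
and one then checks that $\mathcal I(f^2)=4\int_M F^2(\Grad f)\,d\fm$, matching the stated constant.

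The main obstacle is the same nonlinearity/non-reversibility issue that recurs throughout the paper: the de~Bruijn identity $\frac{d}{dt}\mathcal I(u_t)=-2\int_M\{\Gamma_2 + \text{dimensional term}\}$ must be justified along the nonlinear heat flow, and the Bochner inequality is only available on $M_{u_t}$ in pointwise form (Theorem~\ref{th:BW}) or in the integrated form (Theorem~\ref{th:BWint}); care is needed where $\Grad u_t$ vanishes. I would handle this by first proving the inequality under the extra assumption $\inf_M f>0$ (so $\inf_M u_t>0$ is propagated and $\log u_t$ is a legitimate test object, exactly as in Proposition~\ref{pr:5.7.3}), working on the regular set and using the integrated Bochner inequality with nonnegative test functions; the general case then follows by approximating $f$ by $f+\ve$ and letting $\ve\downarrow 0$, using dominated convergence on both sides and continuity of the entropy. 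The regularity inputs (Theorem~\ref{th:hf}, Proposition~\ref{pr:lin}) and the assumption $\sS_F<\infty$, implicit via $\Ric_N\ge K>0 \Rightarrow M$ compact, supply everything needed for these manipulations.
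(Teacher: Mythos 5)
Your heat-flow route is essentially the paper's proof: the paper reduces to $c\le f\le C$, runs the heat flow from $u_0=f^2$, derives from the Bochner inequality (plus Cauchy--Schwarz) the differential inequality $\Psi''(t)\ge -2K\Psi'(t)+\tfrac{2}{N}\Psi'(t)^2$ for $\Psi(t)=\Ent_{\fm}(u_t\fm)$, and integrates up to $t\to\infty$ using $\Psi'(0)=-4\int_M F^2(\Grad f)\,d\fm$ and ergodicity, exactly as in your second (``$\Gamma$-calculus'') alternative, while your opening LSI-applied-to-$f^2$ and tensorization/scaling remarks are not needed. The only slip is in constants: the integrated inequality should read $\Psi(0)\le\frac{N}{2}\log\big(1+\frac{1}{KN}\mathcal{I}(u_0)\big)$ rather than $\frac{2}{KN}\mathcal{I}(u_0)$, which together with $\mathcal{I}(f^2)=4\int_M F^2(\Grad f)\,d\fm$ yields the stated coefficient $\frac{4}{KN}$.
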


\begin{outline}
It suffices to consider the case where $c \le f \le C$ for some $0<c<C<\infty$.
Let $(u_t)_{t \ge 0}$ be the solution to the heat equation with $u_0=f^2$,
and put $\Psi(t):=\Ent_{\fm}(u_t \fm)$ similarly to Proposition~\ref{pr:5.7.3}.
By the Bochner inequality \eqref{eq:Boc} and the Cauchy--Schwarz inequality
we find
\[ \Psi''(t) \ge -2K\Psi'(t) +\frac{2}{N} \Psi'(t)^2. \]
This implies that the function
\[ t \ \longmapsto \ \e^{-2Kt} \bigg( \frac{1}{N}-\frac{K}{\Psi'(t)} \bigg) \]
is non-decreasing in $t>0$, and hence
\[ -\Psi'(t) \le KN \bigg\{ \e^{2Kt} \bigg( 1-\frac{KN}{\Psi'(0)} \bigg) -1 \bigg\}^{-1}. \]
Integrating this inequality gives
\[ \Psi(0) -\Psi(t) \le
 \frac{N}{2} \log\bigg( 1-(1-\e^{-2Kt}) \frac{\Psi'(0)}{KN} \bigg). \]
Since
\[ \Psi'(0) =-\int_M \frac{F^2(\Grad (f^2))}{f^2} \,d\fm
 =-4\int_M F^2(\Grad f) \,d\fm, \]
letting $t \to \infty$ completes the proof.
$\qedd$
\end{outline}

The above inequality yields the \emph{Nash inequality}
and then a non-sharp Sobolev inequality.

\begin{lemma}[Nash inequality]\label{lm:Nash}
Assume that $\Ric_N \ge K>0$ for $N \in [n,\infty)$ and $\fm(M)=1$.
Then we have, for all $f \in H^1(M)$,
\[ \|f\|_{L^2}^{N+2} \le
 \bigg( \|f\|_{L^2}^2 +\frac{4}{KN} \cE(f) \bigg)^{N/2} \|f\|_{L^1}^2. \]
\end{lemma}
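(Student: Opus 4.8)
The plan is to derive the Nash inequality from the logarithmic entropy-energy inequality of Proposition~\ref{pr:6.8.1}, paired with an elementary lower bound for $\Ent_{\fm}(f^2\fm)$ in terms of $\|f\|_{L^1}$ and $\|f\|_{L^2}$ coming from Jensen's inequality.

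First I would normalize. Both sides of the claimed inequality scale in the same way under $f \mapsto \lambda f$ for $\lambda>0$ (namely like $\lambda^{N+2}$, using the positive $1$-homogeneity of $F$ in the energy term), and the inequality is trivial for $f \equiv 0$; hence we may assume $\int_M f^2\,d\fm = 1$. Then Proposition~\ref{pr:6.8.1} applies directly to $f$ and gives
\[ \Ent_{\fm}(f^2\fm) \le \frac{N}{2}\log\!\Big( 1+\frac{4}{KN}\int_M F^2(\Grad f)\,d\fm \Big). \]

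Next I would bound $\Ent_{\fm}(f^2\fm)$ from below. Let $\nu := f^2\fm$, a probability measure carried by $\{f \neq 0\}$. Since $\log$ is concave, Jensen's inequality applied to $|f|^{-1}$ yields
\[ \log\|f\|_{L^1} = \log\!\Big( \int_{\{f \neq 0\}} |f|^{-1}\,d\nu \Big) \ge \int_{\{f \neq 0\}} \log|f|^{-1}\,d\nu = -\frac{1}{2}\int_M f^2\log(f^2)\,d\fm = -\frac{1}{2}\Ent_{\fm}(f^2\fm), \]
where we used $\int_M |f|\,d\fm = \int_{\{f \neq 0\}} |f|^{-1}\,d\nu$ and $\int_M f^2\,d\fm=1$. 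Chaining this with the previous display, exponentiating, and then returning to the unnormalized (scale-invariant) form, we arrive at
\[ \|f\|_{L^2}^{N+2} \le \Big( \|f\|_{L^2}^2 + \frac{4}{KN}\int_M F^2(\Grad f)\,d\fm \Big)^{N/2}\|f\|_{L^1}^2, \]
which is the assertion once the energy integral is rewritten via $\cE$.

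I do not expect a genuine obstacle: this is the classical passage from an entropy-energy (logarithmic Sobolev) inequality to a Nash inequality. The only points requiring some care are technical: the scaling reduction; the measure-theoretic bookkeeping in the Jensen step (the set $\{f = 0\}$ is $\nu$-null, hence harmless); and, since $F$ is non-reversible, keeping the one-sided energy $\cE(f)$ throughout rather than symmetrizing it — in particular I would \emph{not} pass to $|f|$, but instead run Jensen's inequality with $|f|^{-1}$ on $\{f \neq 0\}$ exactly as above.
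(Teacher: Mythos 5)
Your proposal is correct and follows essentially the same route as the paper: normalize $\|f\|_{L^2}=1$, invoke Proposition~\ref{pr:6.8.1}, and combine it with the elementary bound $\Ent_{\fm}(f^2\fm) \ge -2\log\|f\|_{L^1}$. The paper obtains this last bound from the convexity (via H\"older) of $\theta \mapsto \log\|f\|_{L^{1/\theta}}$ evaluated at $\theta=1/2$ and $\theta=1$, which is just a repackaging of your Jensen argument for the probability measure $f^2\fm$, so the two proofs are essentially identical.
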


\begin{outline}
Normalize $f$ so as to satisfy $\int_M f^2 \,d\fm=1$,
and put $\psi(\theta):=\log(\|f\|_{L^{1/\theta}})$ for $\theta \in (0,1]$.
We see by the H\"older inequality that $\psi$ is a convex function.
Therefore
\[ \psi(1) \ge \psi\bigg( \frac{1}{2} \bigg) +\frac{1}{2} \psi'\bigg( \frac{1}{2} \bigg)
 =\frac{1}{2} \psi'\bigg( \frac{1}{2} \bigg)
 =-\frac{1}{2} \Ent_{\fm}(f^2 \fm). \]
Combining this with Proposition~\ref{pr:6.8.1} gives the claim.
$\qedd$
\end{outline}

\begin{proposition}[Non-sharp Sobolev inequality]\label{pr:6.2.3}
Assume that $\Ric_N \ge K>0$ for $N \in [n,\infty) \cap (2,\infty)$ and $\fm(M)=1$.
Then we have
\[ \|f\|_{L^p}^2 \le C_1 \|f\|_{L^2}^2 +C_2 \cE(f) \]
for all $f \in H^1(M)$,
where $p=2N/(N-2)$, $C_1=C_1(N)>1$ and $C_2=C_2(K,N)>0$.
\end{proposition}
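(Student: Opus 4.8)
The plan is to derive this defective Sobolev inequality from the Nash inequality of Lemma~\ref{lm:Nash} by the classical truncation argument, which is available precisely because $N>2$ (here $N\in[n,\infty)\cap(2,\infty)$ with $n\ge2$ forces $N>2$). Raising the inequality of Lemma~\ref{lm:Nash} to the power $2/N$ puts it in the standard form
\[ \|g\|_{L^2}^{2+4/N}\le C_0\big(\|g\|_{L^2}^2+\cE(g)\big)\|g\|_{L^1}^{4/N} \]
with $C_0=C_0(K,N)$, valid for all $g\in H^1(M)$. It then suffices to prove the asserted inequality for nonnegative $f$: for general $f=f_+-f_-$ one has $\|f\|_{L^p}\le\|f_+\|_{L^p}+\|f_-\|_{L^p}$ and $\|f_\pm\|_{L^2}\le\|f\|_{L^2}$, while $\cE(f_+)\le\cE(f)$ and $\rev{\cE}(f_-)=\tfrac12\int_{\{f<0\}}F^*(Df)^2\,d\fm\le\cE(f)$; since $\rev{\Ric}_N\ge K$, Lemma~\ref{lm:Nash} holds verbatim for $\rev{F}$ with the same constant, so applying the nonnegative case to $f_+$ with $F$ and to $f_-$ with $\rev{F}$ recovers the general inequality with constants still depending only on $K$ and $N$.

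For $f\ge0$ I would use the dyadic truncations $f_k:=\varphi_k(f)$, where $\varphi_k(t):=\min\{(t-2^{k-1})_+,\,2^{k-1}\}$ for $k\in\Z$. These satisfy $f=\sum_k f_k$ pointwise (a telescoping identity); each $\varphi_k$ is non-decreasing and $1$-Lipschitz, so $\Grad f_k=\varphi_k'(f)\Grad f$ with $0\le\varphi_k'\le1$ and hence $F(\Grad f_k)\le F(\Grad f)$; the gradients $\Grad f_k$ are carried by the pairwise disjoint sets $\{2^{k-1}<f<2^k\}$, so $\sum_k\cE(f_k)=\cE(f)$; and $\sum_k\|f_k\|_{L^2}^2\le\|f\|_{L^2}^2$ since $\sum_k\varphi_k(f)^2\le\big(\sum_k\varphi_k(f)\big)^2$. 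Applying the displayed Nash inequality to each $f_k$, bounding $\|f_k\|_{L^1}$ from above and $\|f_k\|_{L^2}$ from below by suitable powers of $2^k$ times the distribution functions $\rho_j:=\fm(\{f>2^j\})$, and summing the resulting family of inequalities over $k$ with a discrete H\"older inequality, one arrives at $\|f\|_{L^p}^2\le C_1\|f\|_{L^2}^2+C_2\cE(f)$; the balance of exponents is what pins down $p=2N/(N-2)$, and $\|f\|_{L^1}\le\|f\|_{L^2}$ (valid because $\fm(M)=1$) lets one discard the $L^1$ norms produced along the way.

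The step needing the most care is this discrete summation, i.e. the standard-but-fiddly Nash-implies-Sobolev bookkeeping; the only genuinely Finsler difficulty is the non-reversibility of $F$, which makes the reduction to $f\ge0$ nontrivial, since the energy $\cE$ is not symmetric and replacing $f$ by $|f|$ would cost a power of the reversibility constant --- hence the split with $\rev{F}$ above. The truncation step itself is reversibility-free, as it uses only the chain rule $F(\Grad(\varphi\circ f))=\varphi'(f)\,F(\Grad f)$ for non-decreasing $\varphi$, and compactness of $M$ (Theorem~\ref{th:wBM}) ensures that all the norms occurring are finite for every $f\in H^1(M)$. An essentially equivalent alternative is to convert Proposition~\ref{pr:6.8.1} into ultracontractivity bounds $\|u_t\|_{L^\infty}\lesssim t^{-N/2}\|u_0\|_{L^1}$ for the nonlinear heat flow and to derive the Sobolev inequality from those, but that route again has to negotiate the interplay between the flows associated with $F$ and with $\rev{F}$.
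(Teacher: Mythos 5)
Your proposal is correct and follows essentially the same route as the paper: the paper likewise reduces to a bounded positive $f$, slices it into dyadic truncations $f_k$, and applies the Nash inequality of Lemma~\ref{lm:Nash} to each piece before summing. Your additional care in reducing signed $f$ to the nonnegative case via $f_\pm$ and the reverse structure $\rev{F}$ (so that the constants stay dependent only on $K$ and $N$ despite non-reversibility) is a sound elaboration of the paper's brief ``we can assume $c \le f \le C$'' step.
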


\begin{outline}
We can assume $c \le f \le C$ for some $0<c<C<\infty$.
Slice $f$ into
\[ f_k(x) :=\left\{ \begin{array}{cl}
 2^k & \text{if}\ f(x) >2^{k+1}, \smallskip\\
 f-2^k & \text{if}\ 2^k< f(x) \le 2^{k+1}, \smallskip\\
 0 & \text{if}\ f(x) \le 2^k,
 \end{array} \right.  \]
for $k \in \Z$.
We show the claim by applying the Nash inequality to each $f_k$.
$\qedd$
\end{outline}

The above Sobolev inequality has some applications
those will be used to show the sharp inequality.
Actually one can reduce such qualitative arguments to the Riemannian case
by observing the following.

\begin{corollary}\label{cr:nonSob}
Assume that $\Ric_N \ge K>0$ for $N \in [n,\infty) \cap (2,\infty)$ and $\fm(M)=1$.
Then there exists a $\cC^{\infty}$-Riemannian metric $g$ for which
\[ \|f\|_{L^p}^2 \le C_1 \|f\|_{L^2}^2 +C_2 \sS_F \cE^g(f) \]
holds for all $f \in H^1(M)$,
where $\cE^g$ is the energy form of $(M,g,\fm)$
and $p=2N/(N-2)$, $C_1>1$ and $C_2>0$ are as in Proposition~$\ref{pr:6.2.3}$.
\end{corollary}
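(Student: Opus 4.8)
The key observation is that the only place where the Finsler structure enters the non-sharp Sobolev inequality of Proposition~\ref{pr:6.2.3} is through the energy $\cE(f)=\tfrac12\int_M F^*(Df)^2\,d\fm$, and this energy is comparable, uniformly over $M$, to a genuine Riemannian energy. Indeed, fix any $\cC^\infty$-Riemannian metric $g$ on $M$ (for instance one may average $g_v$ over $v\in U_xM$ with respect to a smooth measure on the indicatrix, but any smooth $g$ will do). By definition of the uniform smoothness constant $\sS_F$, for every covector $\alpha\in T^*_xM$ one has $F^*(\alpha)^2 \le \sS_F\, g^*_\alpha(\alpha,\alpha)$; comparing $g^*_\alpha$ with the fixed $g^*$ on the compact manifold $M$ and using that all Riemannian co-metrics on a compact manifold are mutually bi-Lipschitz, one gets $F^*(\alpha)^2 \le C\,\sS_F\,|\alpha|_{g^*}^2$ for a constant $C$ depending only on $g$ and $F$, which after absorbing $C$ into the metric $g$ (rescaling) can be taken to be $1$. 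Integrating over $M$ yields $\cE(f) \le \sS_F\,\cE^g(f)$ for all $f\in H^1(M)$; note $H^1(M)$ is defined purely from the differentiable structure, so it coincides with the Sobolev space of $(M,g)$.

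The plan is therefore: first invoke Proposition~\ref{pr:6.2.3} to obtain, under $\Ric_N\ge K>0$ with $N\in[n,\infty)\cap(2,\infty)$ and $\fm(M)=1$, the inequality $\|f\|_{L^p}^2 \le C_1\|f\|_{L^2}^2 + C_2\,\cE(f)$ with $p=2N/(N-2)$ and the stated constants. Second, establish the pointwise comparison $F^*(\alpha)^2 \le \sS_F\,|\alpha|^2_{g^*}$ for a suitably chosen (rescaled) smooth Riemannian metric $g$, as sketched above; this is where the hypothesis $\sS_F<\infty$ is implicitly needed, though under $\Ric_N\ge K>0$ compactness of $M$ already forces $\sS_F<\infty$ automatically. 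Third, integrate to get $\cE(f)\le \sS_F\,\cE^g(f)$ and substitute into the non-sharp Sobolev inequality, obtaining $\|f\|_{L^p}^2 \le C_1\|f\|_{L^2}^2 + C_2\,\sS_F\,\cE^g(f)$, which is exactly the claim. The $L^2$- and $L^1$-norms are unaffected since they involve only $\fm$, not $F$.

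The main (and essentially only) obstacle is the second step: one must be careful that the comparison $F^*(\alpha)^2\le \sS_F\, g^*_\alpha(\alpha,\alpha)$ is between $F^*$ and the $\alpha$-dependent metric $g^*_\alpha$, so passing to a fixed metric $g$ requires the extra compactness-based bi-Lipschitz bound, and one should keep track of the fact that the resulting constant gets absorbed by rescaling $g$ rather than appearing explicitly. One should also check that the chosen $g$ is genuinely $\cC^\infty$; taking $g_{ij}(x):=\int_{U_xM} g_{ij}(v)\,d\nu_x(v)$ for a smoothly varying probability measure $\nu_x$ on the indicatrix (e.g. the normalized $g_v$-area measure) does the job, and smoothness of $F$ on $TM\setminus 0$ passes to $g$. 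Everything else is routine bookkeeping, so the corollary follows essentially immediately once this uniform comparison is in place.
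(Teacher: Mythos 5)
Your overall reduction is exactly the intended one: feed a pointwise comparison $F^*(\alpha)^2\le \sS_F\,g^*(\alpha,\alpha)$ into Proposition~\ref{pr:6.2.3} after integrating it to $\cE(f)\le \sS_F\,\cE^g(f)$, noting that the $L^2$- and $L^p$-norms involve only $\fm$. But the main line of your second step is shaky. First, by Euler's theorem $g^*_\alpha(\alpha,\alpha)=F^*(\alpha)^2$, so the inequality $F^*(\alpha)^2\le \sS_F\,g^*_\alpha(\alpha,\alpha)$ you take as a starting point carries no information; the content of $\sS_F$ lies in comparing $F^*(\beta)^2$ with $g^*_\alpha(\beta,\beta)$ for \emph{independent} $\alpha$ and $\beta$. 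Second, $g^*_\alpha$ is an $\alpha$-dependent family, not a single Riemannian co-metric, so ``mutual bi-Lipschitz equivalence of co-metrics on a compact manifold'' does not literally apply; what your argument really uses is only that $F^*$ is comparable to any fixed $|\cdot|_{g^*}$ by compactness, followed by rescaling. That does establish the literal existence statement, but it trivializes the factor $\sS_F$: by rescaling $g$ one could place any constant there, whereas the point of the corollary is that a canonical choice of $g$ satisfies the comparison with constant exactly $\sS_F$.

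The construction you mention only parenthetically is the actual proof, and it makes the detour unnecessary: set $g_x(w,w):=\int_{U_xM}g_v(w,w)\,d\nu_x(v)$ with a smoothly varying probability measure $\nu_x$ on the indicatrix. Since $g_v(w,w)\le \sS_F(x)F^2(w)$ for every $v$ by the very definition of $\sS_F(x)$, the averaged metric satisfies $g_x(w,w)\le \sS_F F^2(w)$, hence by duality $F^*(\alpha)^2\le \sS_F\,g^*(\alpha,\alpha)$ on $T^*M$, and therefore $\cE(f)\le \sS_F\,\cE^g(f)$ for all $f\in H^1(M)$ --- no rescaling, no bi-Lipschitz step, with $\sS_F$ carrying its intended meaning. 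Compactness of $M$ (automatic from Theorem~\ref{th:wBM}) is needed only to guarantee $\sS_F<\infty$; smoothness of $g$ follows from smoothness of $F$ on $TM\setminus 0$ and of the fibrewise measures. With this substitution your argument is complete and coincides with the argument behind the corollary.
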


We are now ready to show the sharp Sobolev inequality.

\begin{theorem}[Sobolev inequality]\label{th:Sobo}
Assume that $\Ric_N \ge K>0$ for $N \in [n,\infty)$ and $\fm(M)=1$.
Then we have
\[ \frac{\|f\|_{L^p}^2 -\|f\|_{L^2}^2}{p-2} \le \frac{N-1}{KN} \int_M F^2(\Grad f) \,d\fm \]
for all $1 \le p \le 2(N+1)/N$ and $f \in H^1(M)$.
\end{theorem}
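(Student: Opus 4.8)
The plan is to run the nonlinear $\Gamma$-calculus in the spirit of the proofs of Theorems~\ref{th:Lich} and \ref{th:LSI}: extract an integral consequence of the Bochner inequality \eqref{eq:Boc} for a suitable power of a heat-flow solution, and then integrate the resulting differential inequality along the heat flow. I would begin with the usual reductions. Since $\Ric_N \ge K>0$ with $N \in [n,\infty)$ forces $M$ to be compact by Theorem~\ref{th:wBM} (so in particular $\sS_F<\infty$), Theorems~\ref{th:hf} and \ref{th:BWint} apply without extra hypotheses. By the non-sharp Sobolev inequality (Proposition~\ref{pr:6.2.3}) together with Corollary~\ref{cr:nonSob}, the embedding $H^1(M)\hookrightarrow L^p(M)$ is bounded and the relevant functionals are $H^1$-continuous, so it suffices to prove the inequality for smooth $f$ with $0<c\le f\le C$; by its $2$-homogeneity we may normalise, and by the symmetry of the hypothesis under $F\leftrightarrow\rev{F}$ it is enough to treat $f>0$. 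Moreover only the range $2<p\le 2(N+1)/N$ needs work: the case $p=2$ is trivial, while for $1\le p<2$ the assertion is the Beckner inequality, which follows from the already established logarithmic Sobolev (Theorem~\ref{th:LSI}) and Poincar\'e--Lichnerowicz (Theorem~\ref{th:Lich}) inequalities.

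For the core step I would let $(u_t)_{t\ge0}$ be the global solution of $\del_t u=\Lap u$ with $u_0=f$, which remains in $[c,C]$ and, by Theorem~\ref{th:hf}, is smooth on $\bigcup_{t>0}(\{t\}\times M_{u_t})$ with $\Lap u_t\in H^1_0(M)$. Consider the functional $\Lambda(t):=\|u_t\|_{L^p}^2$ along this flow. Using the positive $1$-homogeneity of $F$ — hence the chain rules $\Grad(u_t^a)=au_t^{a-1}\Grad u_t$ and $\Lap(u_t^a)=au_t^{a-1}\Lap u_t+a(a-1)u_t^{a-2}F^2(\Grad u_t)$ for $a>0$ — and integration by parts, one expresses $\Lambda'$ and $\Lambda''$ in terms of the integrals $\int_M u_t^{p-2}F^2(\Grad u_t)\,d\fm$, $\int_M u_t^{p-3}(\Lap u_t)F^2(\Grad u_t)\,d\fm$, $\int_M u_t^{p-4}F^4(\Grad u_t)\,d\fm$ and $\int_M u_t^{p-2}(\Lap u_t)^2\,d\fm$. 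Applying the Bochner inequality \eqref{eq:Boc} not to $u_t$ itself but to the power $u_t^{p/2}$ (exactly as \eqref{eq:Boc} was applied to $\e^{ah}$ in the proof of Theorem~\ref{th:LSI}), and combining with the Cauchy--Schwarz inequality, should close into a second-order differential inequality for $\Lambda$; the algebra here is the analogue of \eqref{eq:ah}--\eqref{eq:4.8}, and the restriction $2<p\le 2(N+1)/N$ is precisely the condition under which the residual quadratic form in $\int_M u_t^{p-3}(\Lap u_t)F^2(\Grad u_t)\,d\fm$ and $\int_M u_t^{p-4}F^4(\Grad u_t)\,d\fm$ has the favourable sign.

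Finally I would integrate. By ergodicity $u_t\to\int_M f\,d\fm$ in every $L^q(M)$ (because $\cE(u_t)\to0$), so $\Lambda(\infty)=(\int_M f\,d\fm)^2\le\|f\|_{L^2}^2$ and $\Lambda'(t)\to0$; feeding these into the differential inequality and integrating from $0$ to $\infty$, exactly as in the proofs of Theorem~\ref{th:Lich} and Proposition~\ref{pr:5.7.3}, yields $\|f\|_{L^p}^2-\|f\|_{L^2}^2\le\Lambda(0)-\Lambda(\infty)\le\frac{N-1}{KN}(p-2)\int_M F^2(\Grad f)\,d\fm$, which is the claim. The main obstacle — and the reason the admissible range is $p\le 2(N+1)/N$ rather than the full critical range $p\le 2N/(N-2)$ available in the Riemannian case — is that the non-reversibility $F(v)\ne F(-v)$ obstructs the self-improvement of the Bochner inequality and forces the cruder Cauchy--Schwarz bounds just mentioned; this is the content of Remark~\ref{rm:Sobo}. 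A secondary, purely technical obstacle is the one endemic to this setting: the vector field $\Grad u_t$ is only continuous across $M\setminus M_{u_t}$, so the differentiations of $\Lambda$ and the integrations by parts cannot be done pointwise but must be routed through the integrated form of the Bochner inequality (Theorem~\ref{th:BWint}) after approximating the test functions, precisely as in \cite{Ofunc}.
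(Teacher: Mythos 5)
Your strategy is not the paper's, and it contains a genuine gap at the decisive step. Along the heat flow the quantity $\Lambda(t)=\|u_t\|_{L^p}^2$ converges, by ergodicity and conservation of mass, to $(\int_M f\,d\fm)^2$, not to $\|f\|_{L^2}^2$. Hence integrating a differential inequality from $0$ to $\infty$ can only estimate $\Lambda(0)-\Lambda(\infty)=\|f\|_{L^p}^2-(\int_M f\,d\fm)^2$, and your final chain requires the intermediate bound $\|f\|_{L^p}^2-(\int_M f\,d\fm)^2\le\frac{N-1}{KN}(p-2)\int_M F^2(\Grad f)\,d\fm$. This is strictly stronger than Theorem~\ref{th:Sobo} and is false. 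Indeed, take $f=1+\ve g$ with $\int_M g\,d\fm=0$: the left-hand side is $(p-1)\ve^2\|g\|_{L^2}^2+O(\ve^3)$ while the right-hand side is $(p-2)\frac{N-1}{KN}\ve^2\int_M F^2(\Grad g)\,d\fm+O(\ve^3)$, so for $p>2$ your inequality would force the Poincar\'e--Lichnerowicz inequality with constant $\frac{p-2}{p-1}\cdot\frac{N-1}{KN}$, strictly better than the sharp constant of Theorem~\ref{th:Lich}; this already fails on the round sphere (test with a first eigenfunction), which satisfies the hypotheses. Consequently no Bochner-based differential inequality for $\Lambda$ can close your argument: the obstruction is structural (the flow forgets $\|f\|_{L^2}$ and retains only $\int_M f\,d\fm$), not the technical issue about $M_{u_t}$ and integration by parts that you flag. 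This is exactly why the heat-flow method proves Poincar\'e and the logarithmic Sobolev inequality (whose deficits vanish at the flow's limit) but is not known to give the sharp Sobolev inequality for $p>2$. Your further claims --- that the residual quadratic form is favourable precisely for $p\le 2(N+1)/N$, and that the range $1\le p<2$ follows from Theorems~\ref{th:LSI} and \ref{th:Lich} --- are asserted, not verified.

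The paper proceeds quite differently, by the extremal-function (best-constant) method of Bakry--Ledoux (cf.\ \cite[Theorem~6.8.3]{BGL}) rather than by flow monotonicity: take the smallest constant $C$ in \eqref{eq:C}, assume first that a nonconstant extremizer $f\ge 0$ bounded away from $0$ and $\infty$ exists, use its Euler--Lagrange equation $f^{p-1}-f=-C(p-2)\Lap f$, set $u=\log f$, and compare the integration-by-parts identity \eqref{eq:6.8.5} (a consequence of \eqref{eq:ah}) with the Bochner-derived estimate \eqref{eq:6.8.6} (a variant of \eqref{eq:4.7}); the two free parameters $a,b$ can be chosen subject to the constraints $a\ge 0$, $b\ge 0$ imposed by non-reversibility exactly when $p\in[1,2(N+1)/N]$, yielding $C\le(N-1)/(KN)$ and explaining the restricted range of Remark~\ref{rm:Sobo}. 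The possible non-existence of extremizers is then handled by an approximation argument reduced to the Riemannian case via the non-sharp Sobolev inequality (Proposition~\ref{pr:6.2.3}, Corollary~\ref{cr:nonSob}) --- which is their actual role in the proof, not merely the $H^1$-continuity you invoke. If you want a genuinely flow-based proof of the sharp inequality, the known substitutes use nonlinear (fast-diffusion type) flows, not the heat flow considered here.
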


The case of $p=2$ is understood as the limit, giving the logarithmic Sobolev inequality
(Theorem~\ref{th:LSI}).
The $p=1$ case amounts to the Poincar\'e--Lichnerowicz inequality (Theorem~\ref{th:Lich}).

\begin{outline}
Take the smallest possible constant $C>0$ satisfying
\begin{equation}\label{eq:C}
\frac{\|f\|_{L^p}^2 -\|f\|_{L^2}^2}{p-2} \le 2C\cE(f)
\end{equation}
for all nonnegative functions $f \in H^1(M)$.
In order to show $C \le (N-1)/KN$,
let us suppose that there is an extremal (nonconstant) function $f \ge 0$
enjoying equality in \eqref{eq:C} as well as $0<c \le f \le C<\infty$.
We normalize $f$ as $\|f\|_{L^p}=1$.
The equality in \eqref{eq:C} implies
\[ f^{p-1} -f=-C(p-2) \Lap f, \]
which improves the regularity of $f$.
Put $u:=\log f$.

On the one hand, for $b \ge 0$,
it follows from \eqref{eq:ah} (with $a=b/2$) and the integration by parts that
\begin{align}
C\int_M \e^{bu} \Gamma_2(u) \,d\fm
&= \int_M \e^{bu} F^2(\Grad u) \,d\fm \nonumber\\
&\quad +C\bigg( p-1-\frac{b}{2} \bigg) \int_M \e^{bu} D[F^2(\Grad u)](\Grad u) \,d\fm \nonumber\\
&\quad +C(p-2) (b-1) \int_M \e^{bu} F^4(\Grad u) \,d\fm. \label{eq:6.8.5}
\end{align}
We remark that $b \ge 0$ was required to apply \eqref{eq:ah}.
On the other hand, we deduce from \eqref{eq:4.7} that
\begin{align}
\bigg( 1-\frac{1}{N} \bigg) \int_M \e^{bu} \Gamma_2(u) \,d\fm
&\ge K \int_M \e^{bu} F^2(\Grad u) \,d\fm \nonumber\\
&\quad +\bigg( \frac{3b-4a}{2N}-a \bigg) \int_M \e^{bu} D[F^2(\Grad u)](\Grad u) \,d\fm \nonumber\\
&\quad +\bigg( \frac{(a-b)^2}{N}-a^2 \bigg) \int_M \e^{bu} F^4(\Grad u) \,d\fm,
 \label{eq:6.8.6}
\end{align}
which can be seen as a variant of \eqref{eq:4.8}.
Comparing \eqref{eq:6.8.5} and \eqref{eq:6.8.6},
we would like to choose $a$ and $b$ enjoying
\[ p-1-\frac{b}{2} =\frac{3b-2(N+2)a}{2(N-1)}, \qquad
 (p-2)(b-1) =\frac{(a-b)^2 -Na^2}{N-1} \]
as well as $a \ge 0$, $b \ge 0$.
This is possible for $p \in [1,2(N+1)/N]$,
and then we conclude $C \le (N-1)/KN$ as desired.

There remains the delicate issue that
such an extremal function $f$ does not necessarily exist.
Hence one needs an extra discussion on the approximation.
This technical argument can be reduced to the Riemannian case
thanks to the non-sharp Sobolev inequality in Corollary~\ref{cr:nonSob}.
See \cite[Theorem~6.8.3]{BGL} for details.
$\qedd$
\end{outline}

\begin{remark}\label{rm:Sobo}
The admissible range of the exponent $p$ is $[1,2N/(N-2)]$
in the Riemannian and the reversible Finsler cases.
In the above non-reversible situation,
the additional constraints $a \ge 0$ and $b \ge 0$ make the range narrower.
One can slightly extend the range $[1,2(N+1)/N]$ in Theorem~\ref{th:Sobo}
by a careful calculation into
\[ \bigg[ 1,\frac{7N^2 +2N +(N+2)\sqrt{N^2 +8N}}{4N(N-1)} \bigg], \]
though it is still smaller than $[1,2N/(N-2)]$.
\end{remark}

\section{Gaussian isoperimetric inequality}\label{sc:isop}

This final section is devoted to a geometric application of the $\Gamma$-calculus,
the \emph{Gaussian isoperimetric inequality} obtained in \cite{Oisop}.
This is the infinite dimensional counterpart to the \emph{L\'evy--Gromov isoperimetric inequality},
and was first established in the Riemannian setting by Bakry--Ledoux \cite{BL}.
We will give historical background in \S \ref{ssc:back} and the outline of the proof in \S \ref{ssc:isop}.

\subsection{Background for L\'evy--Gromov isoperimetic inequality}\label{ssc:back}

Given a Finsler manifold $(M,F,\fm)$ such that $\fm(M)=1$,
we define the \emph{isoperimetric profile}
$\cI_{(M,F,\fm)}:[0,1] \lra [0,\infty]$ by
\[ \cI_{(M,F,\fm)}(\theta)
 :=\inf\{ \fm^+(A) \,|\, A \subset M:\text{Borel set with}\ \fm(A)=\theta \}, \]
where
\[ \fm^+(A):=\liminf_{\ve \downarrow 0} \frac{\fm(B^+(A,\ve))-\fm(A)}{\ve},
 \quad B^+(A,\ve):=\{ y \in M \,|\, \inf_{x \in A}d(x,y)<\ve \}. \]
In the Riemannian case,
the classical theorem of L\'evy and Gromov \cite{Lev1,Lev2,Gr} asserts that,
for an $n$-dimensional Riemannian manifold $(M,g)$ of $\Ric \ge n-1$
with the normalized volume measure $\fm:=\vol_g(M)^{-1} \,\vol_g$,
the isoperimetric profile $\cI_{(M,g,\fm)}$ is bounded from below
by the profile of the unit sphere $\Sph^n$:
\begin{equation}\label{eq:LG}
\cI_{(M,g,\fm_g)}(\theta) \ge \cI_{(\Sph^n,\fm_{\Sph^n})}(\theta),
\end{equation}
where $\fm_{\Sph^n}$ is the normalized volume measure as well.
The standard strategy of the proof of \eqref{eq:LG} is as follows:
\begin{enumerate}[(1)]
\item
We take an extremal region $A \subset M$
achieving the minimal boundary measure $\fm^+(A)$ with the prescribed volume $\theta$.

\item
Then the deep theorem in \emph{geometric measure theory} (\`a la Federer, Almgren et al)
guarantees a certain regularity of the boundary of $A$.

\item
Perturbing $A$ gives a differential inequality
(a \emph{Heintze--Karcher type inequality}).

\item
Combining the above differential inequality with the analysis of the behavior
as $\theta \downarrow 0$ gives the isoperimetic inequality.
\end{enumerate}

Along the same strategy one can study the weighted version (\cite{Bay}) and,
moreover, the combinations of upper diameter bounds and lower curvature bounds (\cite{Misharp}).
The most general work of Milman \cite{Misharp} gave the sharp estimate:
\begin{equation}\label{eq:CDD}
\cI_{(M,g,\fm)}(\theta) \ge \cI_{K,N,D}(\theta)
\end{equation}
for $(M,g,\fm)$ with $\Ric_N \ge K$ and $\diam M \le D$,
where $\cI_{K,N,D}$ is the explicit function.
Up to now, however, the regularity theory is known only for Riemannian manifolds.
This had been an obstacle for generalizations to Finsler manifolds as well as
less smooth spaces such as metric measure spaces.

In 2014, Klartag \cite{Kl} gave a beautiful alternative proof
of the L\'evy--Gromov isoperimetric inequality, still on weighted Riemannian manifolds,
but without the regularity theory.
His breakthrough was done by generalizing the \emph{localization method}
in convex geometry to Riemannian manifolds with the help of optimal transport theory.
The localization method, going back to \cite{PW,GM,LS,KLS},
is a sophisticated tool reducing an inequality to those on geodesics.
Then the analysis becomes much simpler and clearer.

Inspired by \cite{Kl}, Cavalletti--Mondino generalized the localization method
to essentially non-branching metric measure spaces satisfying the curvature-dimension condition,
and showed the isoperimetric inequality \eqref{eq:CDD} in \cite{CMisop}
and several functional inequalities in \cite{CMfunc}.
This class of spaces includes reversible Finsler manifolds.
In \cite{Oneedle}, we generalized the argument in \cite{CMisop}
to possibly non-reversible Finsler manifolds, however,
then it turned out that the localization method gives only a non-sharp estimate
in the non-reversible case, precisely,
\begin{equation}\label{eq:needle}
\cI_{(M,F,\fm)}(\theta) \ge \Lambda_F^{-1} \cdot \cI_{K,N,D}(\theta).
\end{equation}
This is due to the fact that reverse curves of geodesics are not necessarily geodesic.
It seems plausible to expect that 
$\Lambda_F^{-1}$ in \eqref{eq:needle} would be removed,
that is to say, non-reversible Finsler manifolds enjoy
the same isoperimetric inequality as reversible Finsler manifolds.

Towards this direction,
we consider the special case where $N=D=\infty$ and $K>0$.
This is the only case where we have another alternative proof of \eqref{eq:LG},
based on the $\Gamma$-calculus (\cite{BL}).
As we saw in the previous section, the $\Gamma$-calculus is not sensitive
to the non-reversibility (only the exception was the range of $p$ in Theorem~\ref{th:Sobo}),
and we actually obtain the sharp estimate.

\subsection{Isoperimetric inequality}\label{ssc:isop}

In order to state the key estimate which is a kind of gradient estimate,
we define
\[ \varphi(c):=\frac{1}{\sqrt{2\pi}} \int_{-\infty}^c \e^{-b^2/2} \,db
 \quad \text{for}\ c \in \R, \qquad
 \scN(\theta):=\varphi' \circ \varphi^{-1}(\theta) \quad \text{for}\ \theta \in (0,1). \]
Set also $\scN(0)=\scN(1):=0$.

\begin{theorem}\label{th:key}
Assume $\Ric_{\infty} \ge K$ for some $K \in \R$ and $\sS_F<\infty$.
Then we have, given a global solution $(u_t)_{t \ge 0}$ to the heat equation
with $u_0 \in \cC^{\infty}_c(M)$ and $0 \le u_0 \le 1$,
\begin{equation}\label{eq:key}
\sqrt{\scN^2(u_t) +\alpha F^2(\Grad u_t)}
 \le P^{\Grad u}_{0,t} \Big( \sqrt{\scN^2(u_0) +c_{\alpha}(t) F^2(\Grad u_0)} \Big)
\end{equation}
for all $\alpha \ge 0$ and $t>0$, where
\[ c_{\alpha}(t):=\frac{1-\e^{-2Kt}}{K} +\alpha \e^{-2Kt} >0 \]
and $c_{\alpha}(t):=2t+\alpha$ when $K=0$.
\end{theorem}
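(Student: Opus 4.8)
The plan is to transplant the Bakry--Ledoux semigroup interpolation, in the nonlinear/linearized form already used in the outline of Theorem~\ref{th:L2}. Fix $t>0$ and an arbitrary nonnegative $h \in \cC(M)$ of compact support, and put $c(s):=c_{\alpha}(t-s)$ for $s \in [0,t]$, so that $c(t)=\alpha$, $c(0)=c_{\alpha}(t)$, and $c$ solves $c'(s)=-2(1-Kc(s))$. Since $0 \le u_0 \le 1$ forces $0 \le u_s \le 1$ for all $s$ by the maximum principle, the quantity
\[ \Theta_s := \sqrt{\scN^2(u_s) +c(s)F^2(\Grad u_s)} \]
is well defined; set $G(s):=\int_M \widehat{P}^{\Grad u}_{s,t}(h) \cdot \Theta_s \,d\fm$ for $s \in [0,t]$. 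By construction $G(t)=\int_M h\sqrt{\scN^2(u_t)+\alpha F^2(\Grad u_t)}\,d\fm$ and, via \eqref{eq:adj}, $G(0)=\int_M h \cdot P^{\Grad u}_{0,t}\big(\sqrt{\scN^2(u_0)+c_{\alpha}(t)F^2(\Grad u_0)}\big)\,d\fm$; hence, since $h \ge 0$ is arbitrary and both sides of \eqref{eq:key} are continuous, it suffices to show that $G$ is non-increasing.

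Differentiating $G$ and using $\del_s\widehat{P}^{\Grad u}_{s,t}(h)=-\Delta\!^{\Grad u_s}\widehat{P}^{\Grad u}_{s,t}(h)$, integration by parts, and the symmetry \eqref{eq:f1f2}, one obtains $G'(s)=\int_M \widehat{P}^{\Grad u}_{s,t}(h)\{\del_s\Theta_s-\Delta\!^{\Grad u_s}\Theta_s\}\,d\fm$, exactly as in the proof of Theorem~\ref{th:L2}. As $\widehat{P}^{\Grad u}_{s,t}(h)\ge 0$, everything reduces to the pointwise inequality $\Delta\!^{\Grad u_s}\Theta_s \ge \del_s\Theta_s$ on $M_{u_s}$. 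Here the Bochner--Weitzenb\"ock formula (Theorem~\ref{th:BW}) does the work: expressing all the relevant quantities through the reference metric $g_{\Grad u_s}$, namely $\Delta\!^{\Grad u_s}u_s=\Lap u_s=\del_s u_s$, $\del_s[F^2(\Grad u_s)/2]=D[\Lap u_s](\Grad u_s)$, $\Delta\!^{\Grad u_s}[\scN^2(u_s)]=2\scN(u_s)\scN'(u_s)\Lap u_s+(2\scN'(u_s)^2-2)F^2(\Grad u_s)$ (using the profile identity $\scN\scN''=-1$), and $\Delta\!^{\Grad u_s}[F^2(\Grad u_s)/2]=\Ric_{\infty}(\Grad u_s)+\|\Grad^2 u_s\|^2_{\HS(\Grad u_s)}$, and then inserting $\Ric_{\infty}\ge K$ together with $c'(s)=-2(1-Kc(s))$, the claim (after multiplication by $2\Theta_s$) boils down to the algebraic bound
\[ g_{\Grad u_s}\big( \nabla^{\Grad u_s}\Theta_s,\nabla^{\Grad u_s}\Theta_s \big)
 \le \scN'(u_s)^2 F^2(\Grad u_s) +c(s)\|\Grad^2 u_s\|^2_{\HS(\Grad u_s)}, \]
which follows from $\nabla^{\Grad u_s}[F^2(\Grad u_s)/2]=\Grad^2 u_s(\Grad u_s)$, the Cauchy--Schwarz bound $g_{\Grad u_s}(\Grad^2 u_s(\Grad u_s),\Grad^2 u_s(\Grad u_s))\le F^2(\Grad u_s)\|\Grad^2 u_s\|^2_{\HS(\Grad u_s)}$, and the elementary inequality $2ab\le a^2+b^2$. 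Retaining the full Hilbert--Schmidt term from Theorem~\ref{th:BW}, not merely $KF^2(\Grad u_s)$, is essential here, exactly as in the Riemannian Bakry--Ledoux argument.

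The remaining issues are technical but need care. The differentiation of $G$ and the integration by parts must be justified from the regularity of the nonlinear heat flow and of the linearized semigroup (Theorem~\ref{th:hf} and Proposition~\ref{pr:lin}), in the same spirit as Theorem~\ref{th:L2}; the square root makes $\Theta_s$ non-differentiable where $\scN(u_s)=0$ and $\Grad u_s=0$ simultaneously, which is circumvented by working with $\sqrt{\ve+\scN^2(u_s)+c(s)F^2(\Grad u_s)}$ and letting $\ve\downarrow 0$ at the end; and the pointwise step has to be supplemented on $M\setminus M_{u_s}$, where $\Grad u_s=0$ and $\Lap u_s$ vanishes $\fm$-almost everywhere. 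I expect these degeneracy and regularity points, rather than the Bakry--Ledoux computation itself --- which transplants to the Finsler setting essentially verbatim thanks to the precise shape of Theorem~\ref{th:BW} --- to be the main obstacle.
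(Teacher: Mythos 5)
Your proposal is correct and follows essentially the same route as the paper: the same interpolation $\zeta_s=\sqrt{\scN^2(u_s)+c_{\alpha}(t-s)F^2(\Grad u_s)}$, the same reduction via the (adjoint) linearized semigroup to the pointwise parabolic inequality $\Delta\!^{\Grad u_s}\zeta_s \ge \del_s\zeta_s$, and the same use of $c_{\alpha}'=2(1-Kc_{\alpha})$, $\scN\scN''=-1$ and Cauchy--Schwarz with respect to $g_{\Grad u_s}$. The only cosmetic difference is that you rederive the needed improvement from the full Bochner--Weitzenb\"ock formula (Theorem~\ref{th:BW}) with the Hilbert--Schmidt term, whereas the paper invokes the improved Bochner inequality (Proposition~\ref{pr:iBoch}), which is itself proved by exactly that Cauchy--Schwarz estimate.
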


For simplicity, we suppressed the dependence of $c_{\alpha}$ on $K$.

\begin{outline}
By the construction of global solutions as gradient curves,
we find that $0 \le u_0 \le 1$ implies $0 \le u_t \le 1$ for all $t>0$,
and hence $\scN(u_t)$ makes sense.
Fix $t>0$ and put
\[ \zeta_s:=\sqrt{\scN^2(u_s) +c_{\alpha}(t-s) F^2(\Grad u_s)},
 \qquad 0 \le s \le t. \]
Then \eqref{eq:key} is written as $\zeta_t \le P^{\Grad u}_{0,t}(\zeta_0)$,
thereby we are done when we show $\del_s[P^{\Grad u}_{s,t}(\zeta_s)] \le 0$.
We can see by a simple calculation that
\[ \del_s[P^{\Grad u}_{s,t}(\zeta_s)]
 =P^{\Grad u}_{s,t}(\del_s \zeta_s -\Delta\!^{\Grad u_s} \zeta_s). \]
Hence it is sufficient to prove $\Delta\!^{\Grad u_s} \zeta_s -\del_s \zeta_s \ge 0$
for $0<s<t$.

By using $c'_{\alpha}(t)=2(1-Kc_{\alpha}(t))$, $\scN''=-1/\scN$
and the improved Bochner inequality (Proposition~\ref{pr:iBoch}),
we obtain
\begin{align*}
\Delta\!^{\Grad u_s} \zeta_s -\del_s \zeta_s
&\ge \frac{c_{\alpha}(t-s) \scN'(u_s)^2}{\zeta_s^3} F^4(\Grad u_s) \\
&\quad -\frac{c_{\alpha}(t-s) \scN(u_s) \scN'(u_s)}{\zeta_s^3}
 Du_s \big( \nabla^{\Grad u_s}[F^2(\Grad u_s)] \big) \\
&\quad +\frac{c_{\alpha}(t-s)}{\zeta_s^3} \frac{\scN^2(u_s)}{F^2(\Grad u_s)}
 D\bigg[ \frac{F^2(\Grad u_s)}{2} \bigg]
 \bigg( \nabla^{\Grad u_s}\bigg[ \frac{F^2(\Grad u_s)}{2} \bigg] \bigg).
\end{align*}
Combining this with the Cauchy--Schwarz inequality
\[ Du_s \big( \nabla^{\Grad u_s}[F^2(\Grad u_s)] \big)
 \le F(\Grad u_s) \sqrt{D[F^2(\Grad u_s)] \big( \nabla^{\Grad u_s} [F^2(\Grad u_s)] \big)} \]
for $g_{\Grad u_s}$, we conclude that
\begin{align*}
&\Delta\!^{\Grad u_s} \zeta_s -\del_s \zeta_s \\
&\ge \frac{c_{\alpha}(t-s)}{\zeta_s^3}
 \bigg( |\scN'(u_s)| F^2(\Grad u_s) -\frac{\scN(u_s)}{2F(\Grad u_s)}
 \sqrt{D[F^2(\Grad u_s)] \big( \nabla^{\Grad u_s} [F^2(\Grad u_s)] \big)} \bigg)^2 \\
& \ge 0.
\end{align*}
This completes the proof.
$\qedd$
\end{outline}

If $K>0$, then $\Ric_{\infty} \ge K$ implies $\fm(M)<\infty$
and hence we can normalize $\fm$ (see \cite{StI}).
Choosing $\alpha=K^{-1}$ in \eqref{eq:key} and letting $t \to \infty$
yields the following.

\begin{corollary}\label{cr:key}
Assume that $(M,F)$ is complete and satisfies
$\Ric_{\infty} \ge K>0$, $\sS_F<\infty$ and $\fm(M)=1$.
Then we have, for any $u \in \cC_c^{\infty}(M)$ with $0 \le u \le 1$,
\begin{equation}\label{eq:key'}
\sqrt{K} \scN\bigg( \int_M u \,d\fm \bigg)
 \le \int_M \sqrt{K\scN^2(u) +F^2(\Grad u)} \,d\fm.
\end{equation}
\end{corollary}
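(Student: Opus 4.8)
The plan is to deduce Corollary~\ref{cr:key} from Theorem~\ref{th:key} by taking the right limit $t\to\infty$ after a careful choice of the free parameter $\alpha$. First I would invoke the remark preceding the corollary: $\Ric_\infty\ge K>0$ forces $\fm(M)<\infty$, so normalising to $\fm(M)=1$ is legitimate, and completeness together with $\sS_F<\infty$ puts us squarely in the hypotheses of Theorem~\ref{th:key}. Given $u\in\cC_c^\infty(M)$ with $0\le u\le 1$, let $(u_t)_{t\ge0}$ be the global solution to the heat equation with $u_0=u$, which exists and is regular by Theorem~\ref{th:hf}, and satisfies $0\le u_t\le1$. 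Choosing $\alpha=K^{-1}$ makes $c_\alpha(t)=\frac{1-\e^{-2Kt}}{K}+K^{-1}\e^{-2Kt}=K^{-1}$ \emph{independent of $t$}, so \eqref{eq:key} becomes
\[
\sqrt{\scN^2(u_t)+K^{-1}F^2(\Grad u_t)}
 \le P^{\Grad u}_{0,t}\Big(\sqrt{\scN^2(u)+K^{-1}F^2(\Grad u)}\Big).
\]
Multiplying by $\sqrt{K}$ gives $\sqrt{K\scN^2(u_t)+F^2(\Grad u_t)}\le P^{\Grad u}_{0,t}\big(\sqrt{K\scN^2(u)+F^2(\Grad u)}\big)$.

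Next I would integrate both sides against $\fm$. On the right, since $P^{\Grad u}_{0,t}$ is the linear heat semigroup for the (time-dependent) Riemannian Laplacian $\Delta^{V_t}$ and $\fm$ is its invariant measure, we have $\int_M P^{\Grad u}_{0,t}(h)\,d\fm=\int_M h\,d\fm$ for $h\ge0$ — equivalently, this is \eqref{eq:adj} with the constant function $1$, using that the adjoint semigroup fixes $1$. Hence
\[
\int_M \sqrt{K\scN^2(u_t)+F^2(\Grad u_t)}\,d\fm
 \le \int_M \sqrt{K\scN^2(u)+F^2(\Grad u)}\,d\fm
\]
for every $t>0$. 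The left-hand integrand dominates $\sqrt{K}\,\scN(u_t)$ pointwise (drop the nonnegative $F^2$ term), so $\sqrt{K}\int_M\scN(u_t)\,d\fm$ is bounded by the right side uniformly in $t$.

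The remaining step is the limit $t\to\infty$. By the ergodicity of heat flow (as in the proof of Theorem~\ref{th:Lich}, $\lim_{t\to\infty}\cE(u_t)=0$ forces $u_t\to\bar u:=\int_M u\,d\fm$ in $L^2(M)$, hence along a subsequence a.e.), and since $\scN$ is continuous and bounded on $[0,1]$, dominated convergence gives $\int_M\scN(u_t)\,d\fm\to\scN(\bar u)$. Thus $\sqrt{K}\,\scN\big(\int_M u\,d\fm\big)\le\int_M\sqrt{K\scN^2(u)+F^2(\Grad u)}\,d\fm$, which is \eqref{eq:key'}. The one point needing a little care — the main (minor) obstacle — is the mass conservation $\int_M P^{\Grad u}_{0,t}(h)\,d\fm=\int_M h\,d\fm$: one should justify it for the relevant $h=\sqrt{K\scN^2(u)+F^2(\Grad u)}$, which is bounded (as $u_0\in\cC_c^\infty$ gives $F(\Grad u)\in\cC_c^1$ and $\scN$ is bounded) and compactly supported plus a bounded term, so it lies in $L^1\cap L^\infty$; then \eqref{eq:adj} applied with the test function $1$, together with the contraction property of $\widehat P^{\Grad u}$ on $L^2$ and an exhaustion/approximation to handle the non-integrability of $1$ on a noncompact $M$, closes the gap. (When $M$ is compact, which is the typical situation here, this is immediate.)
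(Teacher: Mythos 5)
Your proposal is correct and follows essentially the same route as the paper: the paper's proof is precisely the observation that $\alpha=K^{-1}$ makes $c_{\alpha}(t)\equiv K^{-1}$ in Theorem~\ref{th:key}, after which one integrates against $\fm$ using mass preservation of the linearized semigroup and lets $t\to\infty$ via ergodicity. The only caveat is that $\Ric_{\infty}\ge K>0$ does not force $M$ to be compact, so the conservation-of-mass and ergodicity details you flag are genuinely needed in the noncompact case (they are carried out in \cite{Oisop}), but your outline of them is exactly the intended argument.
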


Now we are ready to show the isoperimetric inequality.

\begin{theorem}[Gaussian isoperimetric inequality]\label{th:isop}
Let $(M,F,\fm)$ be complete and satisfy $\Ric_{\infty} \ge K>0$, $\fm(M)=1$
and $\sS_F<\infty$.
Then we have
\begin{equation}\label{eq:BL}
\cI_{(M,F,\fm)}(\theta) \ge \cI_K(\theta)
\end{equation}
for all $\theta \in [0,1]$, where
\[ \cI_K(\theta):=\sqrt{\frac{K}{2\pi}} \e^{-Kc^2(\theta)/2} \qquad
 \text{with}\ \ \theta=\int_{-\infty}^{c(\theta)} \sqrt{\frac{K}{2\pi}} \e^{-Ka^2/2} \,da. \]
\end{theorem}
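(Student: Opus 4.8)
The plan is to transfer the functional inequality \eqref{eq:key'} of Corollary~\ref{cr:key} to the isoperimetric statement by the classical Bakry--Ledoux device of testing it against smooth approximations of characteristic functions. Fix a Borel set $A \subset M$ with $\fm(A) = \theta \in (0,1)$; the cases $\theta \in \{0,1\}$ are trivial since $\cI_K(0) = \cI_K(1) = 0$, and we may assume $\fm^+(A) < \infty$. I would construct a sequence $u_j \in \cC_c^{\infty}(M)$ with $0 \le u_j \le 1$, $u_j \to \mathbf{1}_A$ in $L^1(M,\fm)$, and $\limsup_{j\to\infty} \int_M F(\Grad u_j) \,d\fm \le \fm^+(A)$: the building block is a function equal to $1$ on $A$ and vanishing outside the forward neighbourhood $B^+(A,\ve)$ with a suitable transition layer, which is then mollified in charts and multiplied by smooth cut-offs on large forward balls $B^+(x_0,R)$, letting $R \to \infty$ afterwards (harmless since $\fm(M)=1$). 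Here $\sS_F < \infty$ forces $\Lambda_F < \infty$ by Lemma~\ref{lm:rev}, so $d$ is bi-Lipschitz to a genuine distance and the standard real-analytic machinery (coarea estimates, density of smooth functions, lower semicontinuity of the $F$-energy) is available.

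Next, inserting $u = u_j$ into \eqref{eq:key'} and using $\sqrt{a+b} \le \sqrt{a}+\sqrt{b}$ together with $\scN \ge 0$ on $[0,1]$, I obtain
\[ \sqrt{K}\,\scN\!\Big( \int_M u_j \,d\fm \Big) \le \int_M \sqrt{K\scN^2(u_j) + F^2(\Grad u_j)} \,d\fm \le \sqrt{K}\int_M \scN(u_j) \,d\fm + \int_M F(\Grad u_j) \,d\fm. \]
Letting $j \to \infty$, the left-hand side tends to $\sqrt{K}\,\scN(\theta)$; on the right, $\scN$ is bounded by $(2\pi)^{-1/2}$ and $\scN(u_j) \to \scN(\mathbf{1}_A) = 0$ $\fm$-a.e., so the first term vanishes by dominated convergence (using $\fm(M)=1$), while the second is $\le \fm^+(A) + o(1)$ by construction. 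Hence $\fm^+(A) \ge \sqrt{K}\,\scN(\theta)$, and taking the infimum over admissible $A$ gives $\cI_{(M,F,\fm)}(\theta) \ge \sqrt{K}\,\scN(\theta)$. It then remains to identify the constant: writing $c_0 := \varphi^{-1}(\theta)$, so that $\scN(\theta) = \varphi'(c_0) = (2\pi)^{-1/2}\e^{-c_0^2/2}$, the substitution $a = b/\sqrt{K}$ in the defining integral for $c(\theta)$ gives $\theta = \varphi(\sqrt{K}\,c(\theta))$, whence $\sqrt{K}\,c(\theta) = c_0$ and therefore $\cI_K(\theta) = \sqrt{K/(2\pi)}\,\e^{-Kc(\theta)^2/2} = \sqrt{K/(2\pi)}\,\e^{-c_0^2/2} = \sqrt{K}\,\scN(\theta)$, completing the proof.

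The main obstacle is the approximation step: one must produce smooth compactly supported approximations whose Dirichlet-type energy $\int_M F(\Grad u_j)\,d\fm$ descends to the forward Minkowski content $\fm^+(A)$ itself, and not to some $\Lambda_F$-distorted version of it. The naive distance-function approximation $u_\ve = (1-\ve^{-1}d_A)^+$ loses a factor $\Lambda_F$, because a minimizing geodesic realizing $d_A$ is traversed backwards by $\Grad u_\ve$, and reverse curves of geodesics need not be geodesic; avoiding this requires choosing the transition layer compatibly with the forward enlargement $B^+(A,\ve)$ and carefully exploiting $\Lambda_F < \infty$. It is precisely here, and not in the $\Gamma$-calculus mechanism encoded in Corollary~\ref{cr:key}, that the non-reversibility of $F$ has to be handled with care; everything else (the limit passage and the elementary identity relating $\sqrt{K}\,\scN$ to $\cI_K$) is routine, and this is why the present approach, unlike the localization method leading to \eqref{eq:needle}, produces the sharp inequality with no reversibility constant.
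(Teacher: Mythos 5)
Your overall scheme—feeding approximations of $\mathbf{1}_A$ into \eqref{eq:key'}, using $\sqrt{a+b}\le\sqrt{a}+\sqrt{b}$ and $\scN(u_j)\to 0$, and identifying $\sqrt{K}\,\scN(\theta)=\cI_K(\theta)$—is exactly the right mechanism, but the approximation step on which you rest everything cannot be carried out, and this is precisely where the non-reversibility bites. For any $0\le u\le 1$ approximating $\mathbf{1}_A$, the quantity $F(\Grad u)=F^*(Du)$ measures the differential of a function that \emph{decreases} as one crosses the transition layer away from $A$, so $\int_M F(\Grad u)\,d\fm$ intrinsically records the \emph{backward} boundary measure of $A$, i.e.\ $\liminf_{\ve\downarrow0}\ve^{-1}\{\fm(B^-(A,\ve))-\fm(A)\}$ with $B^-(A,\ve)=\{x\,|\,\inf_{y\in A}d(x,y)<\ve\}$, no matter where you place the layer. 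Already for the non-reversible norm on the line, $F(v)=av$ for $v\ge0$ and $F(v)=b|v|$ for $v\le0$ with $a>b$ (weighted by any smooth positive density, or multiplied by a flat factor to have $n\ge2$), and $A=(-\infty,0]$: every transition of $u$ from $1$ to $0$ gives $\int F^*(Du)\,dx=1/b$ times the boundary density, whereas $\fm^+(A)$ carries the factor $1/a<1/b$. Hence no sequence $u_j$ with $\limsup_j\int_M F(\Grad u_j)\,d\fm\le\fm^+(A)$ exists in general; the obstruction is not the choice of transition layer inside $B^+(A,\ve)$ nor a removable $\Lambda_F$-distortion, so the "main obstacle" you flag cannot be overcome in the way you propose.

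The paper's proof embraces this instead of fighting it. With $u^{\ve}(x)=\max\{1-\ve^{-1}d(x,A),0\}$ (distance \emph{to} $A$) one has $F(\Grad u^{\ve})=\ve^{-1}$ exactly on $B^-(A,\ve)\setminus A$—no factor $\Lambda_F$ is lost, contrary to your reading of the "naive" approximation—and \eqref{eq:key'} applied to smoothings of $u^{\ve}$ yields $\sqrt{K}\,\scN(\theta)\le\liminf_{\ve\downarrow0}\ve^{-1}\{\fm(B^-(A,\ve))-\fm(A)\}$, which is the Gaussian isoperimetric inequality for the \emph{reverse} structure $\rev{F}$. The missing idea in your proposal is the final reversal step: since $\Ric_{\infty}\ge K$ is common to $F$ and $\rev{F}$ (and $\sS_{\rev{F}}=\sS_F$, while completeness is shared because $\Lambda_F<\infty$), the whole argument applied to $\rev{F}$ gives \eqref{eq:BL} for $F$ itself. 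Your elementary closing computation identifying $\sqrt{K}\,\scN(\theta)$ with $\cI_K(\theta)$ is correct and agrees with the paper, but without the swap between $F$ and $\rev{F}$ your argument only bounds the backward boundary measure, not $\cI_{(M,F,\fm)}(\theta)$.
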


\begin{outline}
Let $\theta \in (0,1)$.
Fix a closed set $A \subset M$ with $\fm(A)=\theta$ and consider
\[ u^{\ve}(x):=\max\{1-\ve^{-1}d(x,A),0\}, \qquad \ve>0. \]
Notice that $F(\Grad u^{\ve})=\ve^{-1}$ on $B^-(A,\ve) \setminus A$, where
\[ B^-(A,\ve):=\Big\{ x \in M \,\Big|\, \inf_{y \in A} d(x,y)<\ve \Big\}. \]
Applying \eqref{eq:key'} to (smooth approximations of) $u^{\ve}$ and letting $\ve \downarrow 0$ implies,
with the help of $\scN(0)=\scN(1)=0$,
\[ \sqrt{K} \scN(\theta)
 \le \liminf_{\ve \downarrow 0} \frac{\fm(B^-(A,\ve))-\fm(A)}{\ve}. \]
This is the desired isoperimetric inequality for the reverse Finsler structure $\rev{F}$
(recall Definition~\ref{df:rev}).
Because the curvature bound $\Ric_{\infty} \ge K$ is common to $F$ and $\rev{F}$,
we also obtain \eqref{eq:BL}.
$\qedd$
\end{outline}

The inequality \eqref{eq:BL} has the same form as the Riemannian case in \cite{BL},
thus it is sharp and a model space is the real line $\R$
equipped with the normal (Gaussian) distribution
$d\fm=\sqrt{K/2\pi} \, \e^{-Kx^2/2} \,dx$.
See \cite{BL} for the original work on general linear diffusion semigroups
(influenced by Bobkov's works \cite{Bo1,Bo2}),
\cite{Bor,SC} for the classical Euclidean or Hilbert cases,
and \cite{AM} for the recent result on $\RCD(K,\infty)$-spaces.


\begin{thebibliography}{Oh99}

\bibitem[AT]{AT}
J.~C.~\'Alvarez-Paiva and A.~C.~Thompson,
Volumes in normed and Finsler spaces.
A sampler of Riemann--Finsler geometry, 1--48,
Math.\ Sci.\ Res.\ Inst.\ Publ., {\bf 50}, Cambridge Univ.\ Press, Cambridge, 2004.

\bibitem[AGS]{AGSrcd}
L.~Ambrosio, N.~Gigli and G.~Savar\'e,
Metric measure spaces with Riemannian Ricci curvature bounded from below.
Duke Math.\ J.\ {\bf 163} (2014), 1405--1490.

\bibitem[AM]{AM}
L.~Ambrosio and A.~Mondino,
Gaussian-type isoperimetric inequalities in $\RCD(K,\infty)$ probability spaces for positive $K$.
Atti Accad.\ Naz.\ Lincei Rend.\ Lincei Mat.\ Appl.\ {\bf 27} (2016), 497--514.

\bibitem[Au]{Au}
L.~Auslander, On curvature in Finsler geometry.
Trans.\ Amer.\ Math.\ Soc.\ {\bf 79} (1955), 378--388.

\bibitem[Bak]{Ba}
D.~Bakry,
L'hypercontractivit\'e et son utilisation en th\'eorie des semigroupes. (French)
Lectures on probability theory (Saint-Flour, 1992), 1--114,
Lecture Notes in Math., {\bf 1581}, Springer, Berlin, 1994.

\bibitem[BE]{BE}
D.~Bakry and M.~\'Emery, Diffusions hypercontractives. (French)
S\'eminaire de probabilit\'es, XIX, 1983/84, 177--206,
Lecture Notes in Math., {\bf 1123}, Springer, Berlin, 1985.

\bibitem[BGL]{BGL}
D.~Bakry, I.~Gentil and M.~Ledoux,
Analysis and geometry of Markov diffusion operators.
Springer, Cham, 2014.

\bibitem[BL]{BL}
D.~Bakry and M.~Ledoux,
L\'evy--Gromov's isoperimetric inequality for an infinite-dimensional diffusion generator.
Invent.\ Math.\ {\bf 123} (1996), 259--281.

\bibitem[BQ]{BQ}
D.~Bakry and Z.~Qian,
Some new results on eigenvectors via dimension, diameter, and Ricci curvature.
Adv.\ Math.\ {\bf 155} (2000), 98--153.

\bibitem[BCL]{BCL}
K.~Ball, E.~A.~Carlen and E.~H.~Lieb,
Sharp uniform convexity and smoothness inequalities for trace norms.
Invent.\ Math.\ {\bf 115} (1994), 463--482.

\bibitem[BCS]{BCS}
D.~Bao, S.-S.~Chern and Z.~Shen, An introduction to Riemann-Finsler geometry.
Springer-Verlag, New York, 2000.

\bibitem[Bay]{Bay}
V.~Bayle, Propri\'et\'es de concavit\'e du profil isop\'erim\'etrique et applications (French).
Th\'ese de Doctorat, Institut Fourier, Universite Joseph-Fourier, Grenoble, 2003.

\bibitem[Bob1]{Bo1}
S.~Bobkov,
A functional form of the isoperimetric inequality for the Gaussian measure.
J.\ Funct.\ Anal.\ {\bf 135} (1996), 39--49.

\bibitem[Bob2]{Bo2}
S.~G.~Bobkov,
An isoperimetric inequality on the discrete cube, and an elementary proof of the isoperimetric inequality in Gauss space.
Ann.\ Probab.\ {\bf 25} (1997), 206--214.

\bibitem[Bor]{Bor}
C.~Borell, The Brunn--Minkowski inequality in Gauss space.
Invent.\ Math.\ {\bf 30} (1975), 207--216.

\bibitem[CM1]{CMisop}
F.~Cavalletti and A.~Mondino,
Sharp and rigid isoperimetric inequalities in metric-measure spaces with lower Ricci curvature bounds.
Invent.\ Math.\ (to appear). Available at {\sf arXiv:1502.06465}

\bibitem[CM2]{CMfunc}
F.~Cavalletti and A.~Mondino,
Sharp geometric and functional inequalities in metric measure spaces with lower Ricci curvature bounds.
Geom.\ Topol.\ {\bf 21} (2017), 603--645.

\bibitem[Ch${}_+$]{Ch+}
B.~Chow, S.-C.~Chu, D.~Glickenstein, C.~Guenther, J.~Isenberg, T.~Ivey, D.~Knopf, P.~Lu, F.~Luo, L.~Ni,
The Ricci flow: techniques and applications.~Part I.~Geometric aspects.
American Mathematical Society, Providence, RI, 2007.

\bibitem[EKS]{EKS}
M~Erbar, K.~Kuwada and K.-T.~Sturm,
On the equivalence of the entropic curvature-dimension condition
and Bochner's inequality on metric measure spaces.
Invent.\ Math.\ {\bf 201} (2015), 993--1071.

\bibitem[Ev]{Ev}
L.~C.~Evans, Partial differential equations.
American Mathematical Society, Providence, RI, 1998.

\bibitem[FLZ]{FLZ}
F.~Fang, X.-D.~Li and Z.~Zhang,
Two generalizations of Cheeger-Gromoll splitting theorem via Bakry--Emery Ricci curvature.
Ann.\ Inst.\ Fourier (Grenoble) {\bf 59} (2009), 563--573.

\bibitem[GS]{GS}
Y.~Ge and Z.~Shen, Eigenvalues and eigenfunctions of metric measure manifolds.
Proc.\ London Math.\ Soc.\ (3) {\bf 82} (2001), 725--746.

\bibitem[Gi1]{G-split}
N.~Gigli, The splitting theorem in non-smooth context.
Preprint (2013). Available at {\sf arXiv:1302.5555}

\bibitem[Gi2]{G-nsdg}
N.~Gigli, Nonsmooth differential geometry
-- An approach tailored for spaces with Ricci curvature bounded from below.
Mem.\ Amer.\ Math.\ Soc.\ (to appear). Available at {\sf arXiv:1407.0809}

\bibitem[Gr]{Gr}
M.~Gromov,
Metric structures for Riemannian and non-Riemannian spaces.
Based on the 1981 French original. With appendices by M.~Katz, P.~Pansu and S.~Semmes.
Translated from the French by Sean Michael Bates.
Birkh\"auser Boston, Inc., Boston, MA, 1999.

\bibitem[GM]{GM}
M.~Gromov and V.~D.~Milman,
Generalization of the spherical isoperimetric inequality to uniformly convex Banach spaces.
Compositio Math.\ {\bf 62} (1987), 263--282.

\bibitem[KLS]{KLS}
R.~Kannan, L.~Lov\'asz and M.~Simonovits,
Isoperimetric problems for convex bodies and a localization lemma.
Discrete Comput.\ Geom.\ {\bf 13} (1995), 541--559.

\bibitem[Kl]{Kl}
B.~Klartag, Needle decompositions in Riemannian geometry.
Mem.\ Amer.\ Math.\ Soc.\ (to appear). Available at {\sf arXiv:1408.6322}

\bibitem[KM]{KM}
A.~V.~Kolesnikov and E.~Milman,
Poincar\'e and Brunn--Minkowski inequalities on weighted Riemannian manifolds with boundary.
Preprint (2013). Available at {\sf arXiv:1310.2526}

\bibitem[Ku]{Ku}
K.~Kuwada, Duality on gradient estimates and Wasserstein controls.
J.\ Funct.\ Anal.\ {\bf 258} (2010), 3758--3774.

\bibitem[Led]{Led}
M.~Ledoux, The concentration of measure phenomenon.
Mathematical Surveys and Monographs, {\bf 89}. American Mathematical Society, Providence, RI, 2001.

\bibitem[Lee]{Lee}
P.~W.~Y.~Lee, Displacement interpolations from a Hamiltonian point of view.
J.\ Funct.\ Anal.\ {\bf 265} (2013), 3163--3203.

\bibitem[L\'e1]{Lev1}
P.~L\'evy, Le\c{c}ons d'analyse fonctionnelle. Gauthier-Villars, Paris, 1922.

\bibitem[L\'e2]{Lev2}
P.~L\'evy, Probl\`emes concrets d'analyse fonctionnelle.
Avec un compl\'ement sur les fonctionnelles analytiques par F.~Pellegrino (French).
2d ed. Gauthier-Villars, Paris, 1951.

\bibitem[Li]{Li}
A.~Lichnerowicz, Vari\'et\'es riemanniennes \`a tenseur C non n\'egatif (French).
C.\ R.\ Acad.\ Sci.\ Paris S\'er.\ A-B {\bf 271} (1970), A650--A653.

\bibitem[LV]{LV}
J.~Lott and C.~Villani,
Ricci curvature for metric-measure spaces via optimal transport.
Ann.\ of Math.\ {\bf 169} (2009), 903--991.

\bibitem[LS]{LS}
L.~Lov\'asz and M.~Simonovits,
Random walks in a convex body and an improved volume algorithm.
Random Structures Algorithms {\bf 4} (1993), 359--412.

\bibitem[Ma]{Ma}
C.~H.~Mai,
On Riemannian manifolds with positive weighted Ricci curvature of negative effective dimension.
In preparation (2017).

\bibitem[Mi1]{Misharp}
E. Milman,
Sharp isoperimetric inequalities and model spaces for curvature-dimension-diameter condition.
J.\ Eur.\ Math.\ Soc.\ (JEMS) {\bf 17} (2015), 1041--1078.

\bibitem[Mi2]{Mineg}
E.~Milman,
Beyond traditional curvature-dimension I:
new model spaces for isoperimetric and concentration inequalities in negative dimension.
Trans.\ Amer.\ Math.\ Soc.\ {\bf 369} (2017), 3605--3637.

\bibitem[Mi3]{Miharm}
E.~Milman, Harmonic measures on the sphere via curvature-dimension.
Ann.\ Fac.\ Sci.\ Toulouse Math.\ (to appear). Available at {\sf arXiv:1505.04335}

\bibitem[MN]{MN}
A.~Mondino and A.~Naber,
Structure theory of metric-measure spaces with lower Ricci curvature bounds~I.
Preprint (2014). Available at {\sf arXiv:1405.2222}

\bibitem[Oh1]{Omcp}
S.~Ohta,
On the measure contraction property of metric measure spaces.
Comment.\ Math.\ Helv.\ {\bf 82} (2007), 805--828.

\bibitem[Oh2]{Ouni}
S.~Ohta, Uniform convexity and smoothness, and their applications in Finsler geometry.
Math.\ Ann.\ {\bf 343} (2009), 669--699.

\bibitem[Oh3]{Oint}
S.~Ohta, Finsler interpolation inequalities.
Calc.\ Var.\ Partial Differential Equations {\bf 36} (2009), 211--249.

\bibitem[Oh4]{Osurv}
S.~Ohta,
Optimal transport and Ricci curvature in Finsler geometry.
Probabilistic approach to geometry, 323--342,
Adv.\ Stud.\ Pure Math., {\bf 57}, Math.\ Soc.\ Japan, Tokyo, 2010.

\bibitem[Oh5]{ORand}
S.~Ohta, Vanishing S-curvature of Randers spaces.
Differential Geom.\ Appl.\ {\bf 29} (2011), 174--178.

\bibitem[Oh6]{Ogren}
S.~Ohta, Ricci curvature, entropy, and optimal transport.
Optimal transportation, 145--199,
London Math.\ Soc.\ Lecture Note Ser., {\bf 413}, Cambridge Univ.\ Press, Cambridge, 2014.

\bibitem[Oh7]{Oham}
S.~Ohta, On the curvature and heat flow on Hamiltonian systems.
Anal.\ Geom.\ Metr.\ Spaces {\bf 2} (2014), 81--114.

\bibitem[Oh8]{Osplit}
S.~Ohta, Splitting theorems for Finsler manifolds of nonnegative Ricci curvature.
J.\ Reine Angew.\ Math.\ {\bf 700} (2015), 155--174.

\bibitem[Oh9]{Oneg}
S.~Ohta, $(K,N)$-convexity and the curvature-dimension condition for negative $N$.
J.\ Geom.\ Anal.\ {\bf 26} (2016), 2067--2096.

\bibitem[Oh10]{Oneedle}
S.~Ohta, Needle decompositions and isoperimetric inequalities in Finsler geometry.
J.\ Math.\ Soc.\ Japan (to appear). Available at {\sf arXiv:1506.05876}

\bibitem[Oh11]{Oisop}
S.~Ohta, A semigroup approach to Finsler geometry: Bakry--Ledoux's isoperimetric inequality.
Preprint (2016). Available at {\sf arXiv:1602.00390}

\bibitem[Oh12]{Ofunc}
S.~Ohta, Some functional inequalities on non-reversible Finsler manifolds.
Proc.\ Indian Acad.\ Sci.\ Math.\ Sci.\ (to appear). Available at {\sf arXiv:1701.05704}

\bibitem[OP]{OP}
S.~Ohta and M.~P\'alfia,
Gradient flows and a Trotter--Kato formula of semi-convex functions on CAT(1)-spaces.
Amer.\ J.\ Math.\ (to appear). Available at {\sf arXiv:1411.1461}

\bibitem[OS1]{OShf}
S.~Ohta and K.-T.~Sturm, Heat flow on Finsler manifolds.
Comm.\ Pure Appl.\ Math.\ {\bf 62} (2009), 1386--1433.

\bibitem[OS2]{OSnc}
S.~Ohta and K.-T.~Sturm, Non-contraction of heat flow on Minkowski spaces.
Arch.\ Ration.\ Mech.\ Anal.\ {\bf 204} (2012), 917--944.

\bibitem[OS3]{OSbw}
S.~Ohta and K.-T.~Sturm, Bochner--Weitzenb\"ock formula and Li--Yau estimates on Finsler manifolds.
Adv.\ Math.\ {\bf 252} (2014), 429--448.

\bibitem[OV]{OV}
F.~Otto and C.~Villani,
Generalization of an inequality by Talagrand and links with the logarithmic Sobolev inequality.
J.\ Funct.\ Anal.\ {\bf 173} (2000), 361--400.

\bibitem[PW]{PW}
L.~E.~Payne and H.~F.~Weinberger,
An optimal Poincar\'e inequality for convex domains.
Arch.\ Rational Mech.\ Anal.\ {\bf 5} (1960), 286--292.

\bibitem[Qi]{Qi}
Z.~Qian, Estimates for weighted volumes and applications.
Quart.\ J.\ Math.\ Oxford Ser.\ (2) {\bf 48} (1997), 235--242.

\bibitem[vRS]{vRS}
M.-K.~von Renesse and K.-T.~Sturm,
Transport inequalities, gradient estimates, entropy and Ricci curvature.
Comm.\ Pure Appl.\ Math.\ {\bf 58} (2005), 923--940.

\bibitem[Sal]{Sal}
L.~Saloff-Coste,
Uniformly elliptic operators on Riemannian manifolds.
J.\ Differential Geom.\ {\bf 36} (1992), 417--450.

\bibitem[SS]{SS}
Y.-B.~Shen and Z.~Shen, Introduction to modern Finsler geometry.
World Scientific Publishing Co., Singapore, 2016.

\bibitem[Sh1]{Shvol}
Z.~Shen,
Volume comparison and its applications in Riemann-Finsler geometry.
Adv.\ Math.\ {\bf 128} (1997), 306--328.

\bibitem[Sh2]{Shlec}
Z.~Shen, Lectures on Finsler geometry.
World Scientific Publishing Co., Singapore, 2001.

\bibitem[St1]{StI}
K.-T.~Sturm, On the geometry of metric measure spaces.~I.
Acta Math.\ {\bf 196} (2006), 65--131.

\bibitem[St2]{StII}
K.-T.~Sturm, On the geometry of metric measure spaces.~II.
Acta Math.\ {\bf 196} (2006), 133--177.

\bibitem[SC]{SC}
V.~N.~Sudakov and B.~S.~Cirel'son,
Extremal properties of half-spaces for spherically invariant measures. (Russian)
Problems in the theory of probability distributions, II.
Zap.\ Nau\v{c}n.\ Sem.\ Leningrad.\ Otdel.\ Mat.\ Inst.\ Steklov.\ (LOMI) {\bf 41} (1974), 14--24, 165.

\bibitem[Vi]{Vi}
C.~Villani, Optimal transport, old and new.
Springer-Verlag, Berlin, 2009.

\bibitem[WX]{WX}
G.~Wang and C.~Xia,
A sharp lower bound for the first eigenvalue on Finsler manifolds.
Ann.\ Inst.\ H.\ Poincar\'e Anal.\ Non Lin\'eaire {\bf 30} (2013), 983--996.

\bibitem[WW]{WW}
G.~Wei and W.~Wylie, Comparison geometry for the Bakry--Emery Ricci tensor.
J.\ Differential Geom.\ {\bf 83} (2009), 377--405.

\bibitem[Wy]{Wy}
W.~Wylie, A warped product version of the Cheeger--Gromoll splitting theorem.
Trans.\ Amer.\ Math.\ Soc.\ (to appear).  Available at {\sf arXiv:1506.03800}

\bibitem[Xi]{Xi}
C.~Xia, Local gradient estimate for harmonic functions on Finsler manifolds.
Calc.\ Var.\ Partial Differential Equations {\bf 51} (2014), 849--865.

\bibitem[YH]{YH}
S.-T.~Yin and Q.~He, The first eigenvalue of Finsler $p$-Laplacian.
Differential Geom.\ Appl.\ {\bf 35} (2014), 30--49.

\end{thebibliography}
\end{document}